\numberwithin{equation}{section}
\numberwithin{figure}{section}
\theoremstyle{plain}
\theoremstyle{plain}
\newtheorem*{lyxalgorithm*}{\protect\algorithmname}
\theoremstyle{plain}
\newtheorem{thm}{\protect\theoremname}
\theoremstyle{plain}
\newtheorem*{asm*}{\protect\assumptionname}
\theoremstyle{plain}
\theoremstyle{plain}
\newtheorem{lem}[thm]{\protect\lemmaname}
\theoremstyle{plain}
\newtheorem{cor}[thm]{\protect\corollaryname}
\theoremstyle{plain}
\newtheorem{prop}[thm]{\protect\propositionname}
\theoremstyle{definition}
\newtheorem*{example*}{\protect\examplename}
\newcommand{\1}{\mathbbm{1}}
\newcommand{\eqd}{\overset{d}{=}}
\newcommand{\C}{\mathbb{C}}
\newcommand{\e}{\mathbb{E}}
\newcommand{\E}{\mathrm{e}}
\newcommand{\D}{\mathrm{d}}
\newcommand{\p}{\mathbb{P}}
\newcommand{\Z}{\mathbb{Z}}
\newcommand{\R}{\mathbb{R}}
\newcommand{\N}{\mathbb{N}}
\newcommand{\Dir}{\mathrm{Dir}}
\newcommand{\ti}[1]{\tilde{#1}}
\newcommand{\ov}[1]{\overline{#1}}
\newcommand{\un}[1]{\underline{#1}}
\newcommand{\fl}[1]{\left\lfloor{#1}\right\rfloor}
\newcommand{\cl}[1]{\left\lceil{#1}\right\rceil}
\newcommand{\nf}[1]{{\normalfont#1}}
\newcommand{\da}{\downarrow}
\newcommand{\cip}{\overset{\p}{\to}}
\newcommand{\ve}{\varepsilon}
\newcommand{\sgn}{\text{\normalfont sgn}}
\newcommand{\Leb}{\text{\normalfont Leb}}
\newcommand{\cadlag}{\text{\normalfont c\`adl\`ag}}
\newcommand{\CM}[2]{C_{#1}^{#2}}
\newcommand{\br}[1]{\llbracket{#1}\rrbracket}
\newcommand{\wt}[1]{\widetilde{#1}}
\newcommand{\Mod}[1]{\,\mathrm{mod}\,#1}
\newcommand{\U}{\text{\normalfont U}}
\newcommand{\Exp}{\text{\normalfont Exp}}
  \providecommand{\algorithmname}{Algorithm}
  \providecommand{\assumptionname}{Assumption}
  \providecommand{\examplename}{Example}
  \providecommand{\lemmaname}{Lemma}
  \providecommand{\propositionname}{Proposition}
  \providecommand{\remarkname}{Remark}
\providecommand{\corollaryname}{Corollary}
\providecommand{\theoremname}{Theorem}
\begin{document}


\title[Convex minorants \& the fluctuations of L\'evy processes]{Convex minorants and the fluctuation theory of L\'evy processes}

\author[Gonz\'alez C\'azares and Mijatovi\'c]{Jorge Gonz\'{a}lez C\'{a}zares \and Aleksandar Mijatovi\'{c}}

\address{Department of Statistics, University of Warwick \and 
	The Alan Turing Institute, UK}

\email{jorge.gonzalez-cazares@warwick.ac.uk}
\email{a.mijatovic@warwick.ac.uk}

\begin{abstract}
We establish a novel characterisation of the law of the convex minorant of any L\'evy process. Our  self-contained elementary proof is based on the analysis of piecewise linear convex functions and requires only very basic properties of L\'evy processes. Our main result provides a new simple and self-contained approach to the fluctuation theory of L\'evy processes, circumventing  local time and excursion theory. Easy corollaries include classical theorems, such as  Rogozin's regularity criterion, Spitzer's identities and the Wiener-Hopf factorisation, as well as a novel factorisation identity.
\end{abstract}

\keywords{L\'evy processes, convex minorant, fluctuation theory, vertex process, piecewise linear convex functions}
\subjclass[2020]{Primary: 60G51}

\maketitle
\section{Introduction}
This paper provides elementary, self-contained proofs of some of the main results of the fluctuation theory for L\'evy processes, a subject of classical interest in probability (see e.g. monographs~\cite[Ch.VI]{MR1406564}, \cite[Ch.6]{MR2250061}, \cite[Ch.9]{MR3185174} and the references therein). Our approach is based on a novel  
characterisation of 
the law of the convex minorant of a path of \textit{any} L\'evy process, given in our main result  (Theorem~\ref{thm:levy-minorant} in Section~\ref{sec:convex-minorants}) below.
Its direct consequence 
is a characterisation 
in Theorem~\ref{thm:SB-representation} below
of the law of the triplet of the supremum,
the time
the supremum was attained and the position at $T$ 
of the L\'evy process $X$ on the time interval $[0,T]$ of fixed length $T$. In Section~\ref{sec:SB-representation} we use Theorem~\ref{thm:SB-representation} to provide short proofs of the Rogozin's criterion for the regularity of $X$ at its starting point,
Spitzer's formula for the supremum of $X$, the Wiener-Hopf identities and the continuity of the law of the triplet. All these results are easy corollaries of Theorem~\ref{thm:SB-representation}
and basic properties of $X$. In particular, our approach circumvents local times and excursion theory used in other probabilistic proofs of fluctuation identities~\cite{MR1406564, MR2250061} and the continuity of the law of the triplet~\cite{MR3098676}. To demonstrate further the usefulness of 
Theorem~\ref{thm:levy-minorant}, we conclude Section~\ref{sec:SB-representation} by showing that
the vertex process of a L\'evy process (introduced in~\cite{MR714964} for Brownian motion and later studied in~\cite[Ch.XI.1]{MR1739699}, both on infinite time horizon)
has independent increments. This result 
characterises the joint law of an $n$-tuple of the suprema of the L\'evy process $X$ perturbed by $n$ different drifts, 
see Corollary~\ref{cor:multiple_drifts} below, yielding what appears to be a novel generalisation of the Wiener-Hopf factorisation. 

In Section~\ref{sec:convex-minorants} we provide an elementary proof of Theorem~\ref{thm:levy-minorant}, characterising the law of the convex minorant $C^X_T$ of the path of the L\'evy process $X$ on the time interval $[0,T]$.
Theorem~\ref{thm:levy-minorant} provides an explicit construction of a random piecewise linear convex function, whose law equals that of $C_T^X$. 
Our main result can be viewed as a generalisation of~\cite[Thm~1]{MR2978134}, where such a characterisation was obtained for L\'evy processes with diffuse marginals. The main technical step in~\cite{MR2978134} is a limit result on the Skorkhod space establishing that the faces of the convex minorant of the random-walk skeleton of $X$, described by~\cite[Thm~1]{MR2825583}, converge to the faces of $C_T^X$. This is a highly non-trivial result because it requires the control of the convergence of the excursions of the approximating random-walk skeletons to the excursions of $X$ away from the faces of $C_T^X$. The less activity $X$ has, the harder the convergence argument in~\cite{MR2978134} becomes, with  the conclusion of~\cite[Thm~1]{MR2978134} not being true 
(by Theorem~\ref{thm:levy-minorant})
when the marginals of $X$ have atoms. 

In contrast, the proof of Theorem~\ref{thm:levy-minorant} in Section~\ref{sec:convex-minorants} below  focuses on the convergence of the entire convex minorant of the skeleton of $X$, rather than each face separately. This approach circumvents the delicate face-by-face convergence in the Skorkhod space, which does not hold in general. 
Our proof relies entirely on elementary geometrical arguments in Section~\ref{subsec:CM-PL} to control the convergence of the piecewise linear convex functions of the approximating random walks to the convex function whose law is equal to that of $C_T^X$ for \textit{all} L\'evy processes. Considering the convex minorant as a whole, rather than face-by-face, was crucial both in finding the correct formulation of Theorem~\ref{thm:levy-minorant},
which generalises~\cite[Thm~1]{MR2978134}, and in its eventual proof. In this paper we give a complete self-contained account of the proof of Theorem~\ref{thm:levy-minorant}, which perhaps surprisingly requires only very basic properties of L\'evy processes
(see Section~\ref{subsec:main_theorem} and a YouTube  \href{https://youtu.be/hEg4YmxOgXA}{presentation}~\cite{Presentation_AM} for an overview of the proof).


\section{Fluctuations of L\'evy processes: the stick-breaking approach}
\label{sec:SB-representation}

Let $x:[0,T]\to\R$ be a \cadlag~path  
(i.e. right-continuous with left limits),  $\ov x:[0,T]\to\R$ 
be its running supremum  and  $\ov\tau(x):[0,T]\to[0,T]$ 
the 
first times at which $\ov x$
is attained: 
\[
\ov x_t:=\sup_{s\le t}x_s
\quad\text{and}\quad
\ov\tau_t(x):=\inf\{s\in[0,t]:\max\{x_s,x_{s-}\}=\ov x_t\},
\quad t\in[0,T],
\] 
where
$x_{t-}:=\lim_{s\uparrow t}x_s$ for all $t\in(0,T]$ and $x_{0-}:=x_0$.
The running infimum $\un x$ and its time of 
attainment process $\un\tau(x)$ are defined  analogously. 
The vectors 
$\ov\chi_t(x)=(x_t,\ov x_t,\ov\tau_t(x))$ and 
$\un\chi_t(x)=(x_t,\un x_t,\un\tau_t(x))$, $t\in[0,T]$, 
are referred to 
as \textit{extremal vectors} of $x$. 
Note that $\un\chi_t(x)$ may be recovered from $\ov\chi_t(-x)=(-x_t,-\un x_t,\un\tau_t(x))$.

\subsection{Projection of the convex minorant of a L\'evy process}

Let $X=(X_t)_{t\ge0}$ be a L\'evy process
started at zero (but not identically equal to zero) with \cadlag~paths.  Since $-X$ is a L\'evy process if and only if $X$ is, we may (and do) mostly focus our attention on one of the two extremal vectors. 
At the core of all fluctuation theory results in this paper is the following theorem characterising explicitly the law of $\ov\chi_T(X)$ in terms of the increments of $X$ and an independent stick-breaking process $\ell$ on $[0,T]$
(recall that  $\ell=(\ell_n)_{n\in\N}$ and its remainder process $L=(L_n)_{n\in\N\cup\{0\}}$ are given by the recursion $L_0\coloneqq T$, $\ell_n\coloneqq V_nL_{n-1}$ and $L_n\coloneqq L_{n-1}-\ell_n$ for $n\in\N$, where   $(V_n)_{n\in\N}$ are iid~$\U(0,1)$). 

\begin{thm}[Stick-breaking representation of extrema]
\label{thm:SB-representation}
Let $X$ be any L\'evy process and 
$T>0$ a fixed time horizon. A stick-breaking process~$\ell$ on $[0,T]$ and its remainder $L$, independent of $X$, satisfy 
\begin{equation}
\label{eq:SB-representation}
\ov\chi_T(X)\eqd
\sum_{n=1}^\infty \big(X_{L_{n-1}}-X_{L_n},\max\{X_{L_{n-1}}-X_{L_n},0\},
    \ell_n\cdot\1_{\{X_{L_{n-1}}-X_{L_n}>0\}}\big).
\end{equation}
\end{thm}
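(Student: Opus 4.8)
The plan is to derive Theorem~\ref{thm:SB-representation} as a corollary of Theorem~\ref{thm:levy-minorant}, the characterisation of the law of the convex minorant $C^X_T$. The key observation is that the extremal vector $\ov\chi_T(X)$ is a deterministic functional of the path of $X$, and in fact can be read off from the convex minorant together with the ``excursions'' of $X$ above it. However, the supremum $\ov X_T$, the time $\ov\tau_T(X)$ at which it is attained, and the terminal value $X_T$ are \emph{almost} determined by $C^X_T$ alone: the terminal value $X_T=C^X_T(T)$ and the supremum is attained at a vertex of the convex minorant, namely at the left endpoint of the last face whose slope is negative (or at $0$ if all slopes are nonnegative, or at $T$ if all slopes are positive). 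So the first step is to express $\ov\chi_T(X)$ purely in terms of the faces of $C^X_T$: if the faces have horizontal lengths $b_1,b_2,\dots$ and heights (signed increments) $h_1,h_2,\dots$ ordered by increasing slope $h_i/b_i$, then $X_T=\sum_i h_i$, $\ov X_T=\sum_{i:h_i>0}h_i$, and $\ov\tau_T(X)=\sum_{i:h_i>0}b_i$, using the convention about which face attains the max. One must be slightly careful with faces of zero slope (where $h_i=0$) and with the first-time-attained convention, but $\1_{\{h_i>0\}}$ is the correct indicator and this matches the right-hand side of~\eqref{eq:SB-representation} term by term once we identify $b_n=\ell_n$ and $h_n=X_{L_{n-1}}-X_{L_n}$.

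The second step is to invoke Theorem~\ref{thm:levy-minorant}, which (in the form I anticipate) states that the faces of $C^X_T$, listed in a size-biased-by-length random order, have lengths given by a stick-breaking process $\ell=(\ell_n)_{n\in\N}$ on $[0,T]$ and corresponding heights given by independent increments $X_{L_{n-1}}-X_{L_n}$ of $X$ over the associated stick-breaking intervals, with $\ell$ independent of $X$. Crucially, the functionals $\sum_i h_i$, $\sum_{i:h_i>0}h_i$, $\sum_{i:h_i>0}b_i$ are all invariant under permutations of the index set $i$ (they are sums over an unordered collection of face data), so the particular stick-breaking order in which Theorem~\ref{thm:levy-minorant} presents the faces does not matter. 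Substituting the stick-breaking description of the faces into the formulas from the first step yields exactly the series on the right-hand side of~\eqref{eq:SB-representation}.

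The third step is to check convergence and well-definedness of the infinite series. Since $\sum_{n=1}^\infty\ell_n=T$ almost surely and $\ell_n\to0$, the increments $X_{L_{n-1}}-X_{L_n}$ tend to $0$ in probability (as $L_n\ua$ some limit, in fact $L_n$ decreases to $0$... wait, $L_{n-1}-L_n=\ell_n\to 0$ so consecutive values converge); the partial sums $\sum_{n=1}^N(X_{L_{n-1}}-X_{L_n})$ telescope-like behaviour needs the observation that $L_N\to0$ and right-continuity of $X$ at $0$ gives $X_{L_N}\to X_0=0$, hence $\sum_{n=1}^\infty(X_{L_{n-1}}-X_{L_n})=X_{L_0}-\lim_N X_{L_N}=X_T-0=X_T$ almost surely, so the first coordinate converges. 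The second and third coordinates are sums of nonnegative terms dominated respectively by a convergent rearrangement argument and by $\sum\ell_n=T<\infty$, so they converge absolutely. The main obstacle, and the part requiring genuine care rather than bookkeeping, is the precise matching in the first step between the combinatorial ``last negative face'' description of $\ov\tau_T(X)$ and the indicator $\1_{\{X_{L_{n-1}}-X_{L_n}>0\}}$ appearing in the theorem — in particular verifying that the first-passage-time convention in the definition of $\ov\tau_t(x)$ (with the $\max\{x_s,x_{s-}\}$) is consistent with summing the lengths of all strictly-positive-slope faces, and handling the measure-zero-but-not-always-negligible edge cases where $X$ has a face of slope exactly zero or where the supremum coincides with an endpoint. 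Everything else is a direct substitution using the permutation-invariance of the relevant functionals.
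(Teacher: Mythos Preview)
Your plan—derive Theorem~\ref{thm:SB-representation} from Theorem~\ref{thm:levy-minorant} by reading the extremal triple off the face data and then using permutation-invariance—is exactly the paper's route, but your first step contains a genuine error: the supremum $\ov X_T$ is \emph{not} a functional of the convex minorant $C_T^X$. The convex minorant encodes the infimum, not the supremum; any nonnegative excursion starting and ending at $0$ has convex minorant identically zero, regardless of how high the path climbs. Concretely, if $(b_i,h_i)$ are the faces of $C_T^X$ then $\sum_{i:h_i>0}h_i = C_T^X(T)-\min_t C_T^X(t)=X_T-\un X_T$ and (modulo the zero-slope convention) $\sum_{i:h_i>0}b_i = T-\un\tau_T(X)$, not $\ov X_T$ and $\ov\tau_T(X)$ as you claim. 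Your argument as written would therefore prove that $(X_T,\,X_T-\un X_T,\,T-\un\tau_T(X))$ has the law of the right-hand side of~\eqref{eq:SB-representation}, which is a different (also true) statement.

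The fix is a one-word change: apply Theorem~\ref{thm:levy-minorant} to $-X$ rather than to $X$. The convex minorant $C_T^{-X}$ is minus the concave majorant of $X$, and the concave majorant \emph{does} determine $(\ov X_T,\ov\tau_T(X))$ as its maximum and argmax. The faces of $C_T^{-X}$ have lengths $\ell_n$ and heights $(-X)_{L_{n-1}}-(-X)_{L_n}=-\xi_n$, so $\ov X_T=-\min C_T^{-X}\eqd -\sum_{n:-\xi_n<0}(-\xi_n)=\sum_n\max\{\xi_n,0\}$, and similarly $\ov\tau_T(X)=\un\tau_T(-X)\eqd\sum_{n:\xi_n>0}\ell_n$, where the strict inequality in the indicator matches the first-time convention for $\ov\tau$. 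This is precisely what the paper does (it states that Theorem~\ref{thm:levy-minorant} ``when applied to $-X$'' yields Theorem~\ref{thm:SB-representation}). Your permutation-invariance and convergence remarks are fine once this correction is made.
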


We refer to~\eqref{eq:SB-representation}
as the 
\emph{SB representation} of the extremal vector $\ov\chi_T(X)$. 
The SB representation in~\eqref{eq:SB-representation} is an easy direct consequence of our main result, Theorem~\ref{thm:levy-minorant} in Section~\ref{sec:convex-minorants} below, because 
$\ov\chi_T(X)$
equals the extremal vector of the concave majorant of $X$ on $[0,T]$.
We stress that the power of the SB representation 
in Theorem~\ref{thm:SB-representation} for the extremal vector $\ov\chi_T(X)$
for any fixed time horizon $T$
lies in the fact that~\eqref{eq:SB-representation}
essentially reduce the properties of the path functional 
$\ov\chi_T(X)$
to the properties of the marginals of $X$.
We now illustrate this by deriving many of the classical highlights of the fluctuation theory of L\'evy processes from Theorem~\ref{thm:SB-representation}.
Note first that, since
$-\log(\ell_n/T)$ is gamma distributed with density $s\mapsto s^{n-1}\E^{-s}/(n-1)!$ for $s>0$,
for a measurable $f:[0,T]\to\R_+$ we have
\begin{equation}
\label{eq:stick_breaking_expectation}
\e \sum_{n\in\N}f(\ell_n)=\int_0^T s^{-1}f(s) \D s.
\end{equation}

In the definition of $\ov\tau_t(X)$ (resp.  $\un\tau_t(X)$), we take the first rather than last
time the maximum (resp. minimum) is attained. Our first corollary shows that this choice makes little difference. 

\begin{cor}
	\label{cor:unique-maximum}
	A L\'evy process $X$ attains its maximum at a unique time a.s. if and only if $X$ is not a
	driftless compound Poisson process; then 
	$\ov\chi_t(X)\eqd (X_t,X_t-\un X_t,t-\un\tau_t(X))$ for all $t>0$.
\end{cor}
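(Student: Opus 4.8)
The first move is to rephrase uniqueness of the argmax as a comparison of two expectations. Introduce the \emph{last} times $\ov\sigma_t(X):=\sup\{s\in[0,t]:\max\{X_s,X_{s-}\}=\ov X_t\}$ and $\un\sigma_t(X):=\sup\{s\in[0,t]:\min\{X_s,X_{s-}\}=\un X_t\}$ at which the running supremum, resp.\ infimum, on $[0,t]$ is attained. The set of such times for the supremum is nonempty and contained in $[\ov\tau_t(X),\ov\sigma_t(X)]$, so it is a singleton exactly when $\ov\tau_t(X)=\ov\sigma_t(X)$; since $\ov\tau_t(X)\le\ov\sigma_t(X)$ pathwise and both lie in $[0,t]$, the maximum is a.s.\ attained at a unique time iff $\e[\ov\sigma_t(X)]=\e[\ov\tau_t(X)]$. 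The plan is to compute both means from Theorem~\ref{thm:SB-representation}: conditioning on a stick-breaking process $\ell$ on $[0,t]$ (remainder $L$) independent of $X$, for which $X_{L_{n-1}}-X_{L_n}$ has the law of $X_{\ell_n}$, the third coordinate of~\eqref{eq:SB-representation} together with~\eqref{eq:stick_breaking_expectation} yields Spitzer's identity $\e[\ov\tau_t(X)]=\int_0^t\p(X_s>0)\,\D s$.

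To reach $\e[\ov\sigma_t(X)]$ I would add a vanishing drift. For $\epsilon>0$ the process $X+\epsilon\,\id$ is again a L\'evy process, so the same computation gives $\e[\ov\tau_t(X+\epsilon\,\id)]=\int_0^t\p(X_s>-\epsilon s)\,\D s$. The geometric input needed is the elementary fact that $\ov\tau_t(x+\epsilon\,\id)\da\ov\sigma_t(x)$ as $\epsilon\da0$ for \emph{every} c\`adl\`ag path $x$: writing $g:=\max\{x_\cdot,x_{\cdot-}\}$, an upper semicontinuous function with $\overline{x+\epsilon\,\id}=g+\epsilon\,\id$, this says that the first maximiser of $g+\epsilon\,\id$ decreases to the largest maximiser of $g$, which follows from upper semicontinuity and compactness of $[0,t]$. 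Bounded convergence and $\{X_s>-\epsilon s\}\da\{X_s\ge0\}$ then give $\e[\ov\sigma_t(X)]=\int_0^t\p(X_s\ge0)\,\D s$, so $\e[\ov\sigma_t(X)]-\e[\ov\tau_t(X)]=\int_0^t\p(X_s=0)\,\D s$, and the maximum is a.s.\ attained at a unique time iff $\int_0^t\p(X_s=0)\,\D s=0$.

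The equivalence in the corollary then comes down to identifying when this integral vanishes. If $X$ is a driftless compound Poisson process with rate $\lambda$, then $\p(X_s=0)\ge\E^{-\lambda s}>0$, so the integral is positive. If $X$ is not a driftless compound Poisson process, then $\p(X_s=0)=0$ for Lebesgue-a.e.\ $s>0$: when $X$ has a Gaussian component $X_s$ has a density; when $X$ has infinitely many small jumps $X_s$ is non-atomic; and when $X=\gamma\,\id+Z$ with $Z$ compound Poisson and $\gamma\neq0$ one has $\p(X_s=0)=\p(Z_s=-\gamma s)$, which is positive only for the countably many $s$ with $-\gamma s$ in the fixed countable set of atoms of the laws $\mathcal L(Z_s)$. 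This dichotomy — in particular the non-atomicity of infinite-activity L\'evy marginals — is the only input I would take from classical theory rather than derive, and I expect stating it cleanly to be the main obstacle; it also shows the criterion does not depend on $t$, proving the first assertion.

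For the displayed identity I would assume $X$, hence also $-X$, is not a driftless compound Poisson process; then the first assertion applied to $-X$ gives $\un\tau_t(X)=\un\sigma_t(X)$ a.s. Fixing $\ell\indep X$ as above, setting $Y_n:=X_{L_{n-1}}-X_{L_n}$, and applying Theorem~\ref{thm:SB-representation} to $X$ and to $-X$ — using for the latter the recovery of $\un\chi_t(X)$ from $\ov\chi_t(-X)$ noted in Section~\ref{sec:SB-representation}, $\sum_n\ell_n=t$ a.s., and $Y_n+\max\{-Y_n,0\}=\max\{Y_n,0\}$ — one would obtain
\begin{align*}
\ov\chi_t(X)&\eqd\bigl(\textstyle\sum_n Y_n,\ \sum_n\max\{Y_n,0\},\ \sum_n\ell_n\1_{\{Y_n>0\}}\bigr),\\
\bigl(X_t,\,X_t-\un X_t,\,t-\un\tau_t(X)\bigr)&\eqd\bigl(\textstyle\sum_n Y_n,\ \sum_n\max\{Y_n,0\},\ \sum_n\ell_n\1_{\{Y_n\ge0\}}\bigr).
\end{align*}
The two right-hand sides differ only in the third coordinate, by the nonnegative quantity $\sum_n\ell_n\1_{\{Y_n=0\}}$, whose expectation is $\int_0^t\p(X_s=0)\,\D s=0$ by~\eqref{eq:stick_breaking_expectation}; hence it vanishes a.s., the right-hand sides agree a.s., and the identity follows. (A shorter but less self-contained alternative is to combine $\un\tau_t(X)=\un\sigma_t(X)$ a.s.\ with the time-reversal duality $(X_t-X_{(t-s)-})_{s\in[0,t]}\eqd(X_s)_{s\in[0,t]}$, under which the supremum-extremal vector of the reversed path is $(X_t,\,X_t-\un X_t,\,t-\un\sigma_t(X))$.)
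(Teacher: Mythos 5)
Your argument is correct, and it reaches the conclusion by a genuinely different route than the paper at both key steps, even though it rests on the same two inputs (the SB representation/Spitzer-type computation and Doeblin's dichotomy for $\p(X_s=0)$). The paper treats the last time of the maximum by time reversal: with $X'_s:=X_{(t-s)-}-X_t$ one has $X'\eqd-X$ on $[0,t]$, so the expected gap between the first and last maximiser equals $t-\e[\un\tau_t(X)]-\e[\ov\tau_t(X)]$, which the SB representation applied to $X$ and $-X$ identifies with $\int_0^t\p(X_s=0)\,\D s$; the displayed identity in law is then read off from $\ov\chi_t(X')\eqd\ov\chi_t(-X)$. You avoid time reversal altogether: you compute the mean of the last maximiser by perturbing with a vanishing drift, using the deterministic fact that the first argmax of $\max\{x_\cdot,x_{\cdot-}\}+\ve\,\id$ on $[0,t]$ converges to the last argmax as $\ve\da0$, plus Spitzer's formula for $X+\ve\,\id$ and bounded convergence, which yields the additional identity $\e[\ov{\sigma}_t(X)]=\int_0^t\p(X_s\ge0)\,\D s$ and again reduces everything to $\int_0^t\p(X_s=0)\,\D s$; and you obtain the identity in law by exhibiting both $\ov\chi_t(X)$ and $(X_t,X_t-\un X_t,t-\un\tau_t(X))$ as equal in law to explicit SB vectors that coincide a.s., since their only discrepancy $\sum_n\ell_n\1_{\{Y_n=0\}}$ has zero mean. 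The paper's reversal argument is shorter; yours is somewhat longer but needs no duality for the reversed path, and the direct SB matching for the displayed identity is arguably cleaner. Two small blemishes, neither a gap: the line ``$\overline{x+\ve\,\id}=g+\ve\,\id$'' is not literally an identity of running suprema (what you actually use, correctly, is that the set of times at which $x+\ve\,\id$ attains its supremum on $[0,t]$ is the argmax set of $g+\ve\,\id$), and the monotone decrease of the perturbed first argmax is asserted rather than proved, although both the monotonicity and the limit do follow from the short comparison argument you allude to (and plain convergence, which is all you need for bounded convergence, follows from upper semicontinuity alone).
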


\begin{proof}
If $X$ is a driftless compound Poisson process, it has piecewise constant paths, making the time of the maximum not unique. Assume $X$ is not a driftless compound Poisson, then $\p(X_t=0)>0$ for at most countably many $t>0$. Indeed, either
the law of $X_t$ is diffuse or, by Doeblin's  lemma~\cite[Lem.~15.22]{MR1876169},
$X$ is compound Poisson with drift $\mu\ne 0$. 
In the latter case, $\p(X_t=0)>0$ if and only if 
$-\mu t$
is in a countable set generated by the atoms of the L\'evy measure of $X$,
implying the claim.

The time-reversal process $X'=(X'_s)_{s\in[0,t]}$, defined as
$X'_s:=X_{(t-s)-}-X_t$, $s\in[0,t]$, has the same law as $(-X_s)_{s\in[0,t]}$, implying 
$\ov\tau_t(X')\eqd\un\tau_t(X)$.
The gap 
$(t-\ov\tau_t(X'))-\ov\tau_t(X)\geq 0$
between the time of the first and last maximum of $X$ 
has expectation equal to zero and is hence zero a.s. Indeed, since 
$\p(X_t= 0) > 0$
for at most countably many $t>0$,
 Theorem~\ref{thm:SB-representation} and~\eqref{eq:stick_breaking_expectation} yield 
\begin{equation*}
	t-\e[\un\tau_t(X)]-\e[\ov\tau_t(X)]
	=\e\sum_{n=1}^\infty \ell_n\1_{\{X_{L_{n-1}}=X_{L_n}\}}
	=\e\sum_{n=1}^\infty \ell_n\p(X_{\ell_n}=0|\ell_n)
	=\int_0^t \p(X_s=0)\D s= 0.
\end{equation*}
The identity in law 
follows from $\ov\chi_t(X')\eqd\ov\chi_t(-X)$.
\end{proof}

\begin{cor}
	\label{cor:Spitzer}
	For any L\'evy process $X$, the following formulae hold for any 
	$t>0$:
	\begin{equation}
		\label{eq:Spitzer}
		\e[\ov\tau_t(X)] = \int_0^t \p(X_s>0)\D s
		\qquad\text{and}\qquad
		\e[\ov X_t] = \int_0^t (\e\max\{X_s,0\}/s)\D s.
	\end{equation}
\end{cor}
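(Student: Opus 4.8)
The plan is to read off both identities directly from the SB representation in Theorem~\ref{thm:SB-representation} together with the expectation formula~\eqref{eq:stick_breaking_expectation}, exactly as in the proof of Corollary~\ref{cor:unique-maximum}. Taking expectations in the third and second coordinates of~\eqref{eq:SB-representation} reduces everything to a sum over the stick-breaking increments $\ell_n$, and~\eqref{eq:stick_breaking_expectation} converts that sum into an ordinary Lebesgue integral over $[0,t]$.

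For the first formula I would start from $\ov\tau_t(X)\eqd\sum_{n=1}^\infty \ell_n\1_{\{X_{L_{n-1}}-X_{L_n}>0\}}$, take expectations, and condition on the stick-breaking process $\ell$ (equivalently on $L$). Since $X$ is independent of $\ell$ and the increment $X_{L_{n-1}}-X_{L_n}$ over the block of length $\ell_n=L_{n-1}-L_n$ has the same law as $X_{\ell_n}$ given $\ell_n$, we get
\begin{equation*}
\e[\ov\tau_t(X)]=\e\sum_{n=1}^\infty \ell_n\,\p(X_{\ell_n}>0\mid\ell_n)=\int_0^t \p(X_s>0)\,\D s,
\end{equation*}
where the last step is~\eqref{eq:stick_breaking_expectation} applied to $f(s)=s\,\p(X_s>0)$. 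The second formula is identical in structure: from the second coordinate, $\ov X_t\eqd\sum_{n=1}^\infty\max\{X_{L_{n-1}}-X_{L_n},0\}$, so conditioning on $\ell$ and using that $\e[\max\{X_{L_{n-1}}-X_{L_n},0\}\mid\ell]=\e\max\{X_{\ell_n},0\}$ gives
\begin{equation*}
\e[\ov X_t]=\e\sum_{n=1}^\infty\e[\max\{X_{\ell_n},0\}\mid\ell_n]=\int_0^t \frac{\e\max\{X_s,0\}}{s}\,\D s
\end{equation*}
by~\eqref{eq:stick_breaking_expectation} with $f(s)=\e\max\{X_s,0\}$.

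The only genuine issue, and the one I would be careful to address, is measurability and the interchange of expectation with the infinite sum: the summands in~\eqref{eq:SB-representation} are nonnegative (the $\ell_n$ are positive and $\max\{\cdot,0\}\ge 0$), so Tonelli's theorem applies unconditionally and all manipulations are justified even when the integrals are infinite, so that both identities hold in $[0,\infty]$ without any integrability hypothesis on $X$. I would also note in passing that the map $s\mapsto\p(X_s>0)$ is measurable (e.g. by right-continuity of paths) so the right-hand integrals make sense. Beyond this, there is no real obstacle: the corollary is a direct computation once Theorem~\ref{thm:SB-representation} is in hand, and the proof is essentially a one-line application of~\eqref{eq:stick_breaking_expectation} to each coordinate.
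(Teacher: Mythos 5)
Your proof is correct and follows essentially the same route as the paper: take expectations in the relevant coordinate of the SB representation~\eqref{eq:SB-representation}, condition on the stick-breaking process using the independence of $X$ and $\ell$, and apply~\eqref{eq:stick_breaking_expectation} (the paper invokes Fubini where you invoke Tonelli, which is the same justification since all terms are nonnegative). The extra remarks on measurability and on the identities holding in $[0,\infty]$ are fine but not needed beyond what the paper's one-line argument already contains.
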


\begin{proof}
	Let $\rho(s):=\p(X_s>0)$ and take expectations in the third 
	coordinate of SB-representation~\eqref{eq:SB-representation}.  Fubini's theorem and the formula in~\eqref{eq:stick_breaking_expectation} imply
	\[
	\e\ov\tau_t(X)
	=\sum_{n=1}^\infty \e[\ell_n\rho(\ell_n)]
	=\int_0^t \rho(s)\D s\quad\text{for any $t>0$.}
	\]
 The proof of the formula for the supremum, based on~\eqref{eq:SB-representation} and~\eqref{eq:stick_breaking_expectation}, is analogous.
\end{proof}

\subsection{Wiener--Hopf factorisation and Rogozin's criteria}

Consider an exponential time horizon $T_\theta\sim\Exp(\theta)$ 
with parameter $\theta\in(0,\infty)$ (i.e. 
$\e T_\theta = 1/\theta$),
independent of the L\'evy process $X$.
Let $\ell^{(\theta)}=(\ell_n^{(\theta)})_{n\in\N}$ be a stick-breaking process with a random time horizon
$T_\theta$. 
The random measure  
$\sum_{n=1}^\infty \delta_{\ell_n^{(\theta)}}$
on $(0,\infty)$
is easily seen to be a Poisson point process (PPP)
(see Appendix~\ref{app:PPP_expon-time} below).
By~\eqref{eq:stick_breaking_expectation}
its mean measure satisfies
$\e \sum_{n\in\N} \delta_{\ell_n^{(\theta)}}(A) =\int_A t^{-1}\E^{-\theta t}\D t$
for a measurable $A$ ($\delta_z$ denotes the Dirac delta at the point $z$).
Let 
$F(t,\D x):=\p(X_t\in \D x)$ denote the law of
$X_t$ for any $t>0$.
Marking each point $\ell_n^{(\theta)}$
by a random real number sampled from the law
$F(\ell_n^{(\theta)},\cdot)$,
by the Marking Theorem~\cite[p.~55]{MR1207584},
produces
a PPP on
$(0,\infty)\times\R$.

\begin{prop}
\label{prop:SB-PPP}
Let the time horizon $T_\theta\sim\Exp(\theta)$ and the stick-breaking process $\ell^{(\theta)}$ be independent of the L\'evy process $X$. Define $\xi_n^{(\theta)}:=X_{L_{n-1}^{(\theta)}}-X_{L_{n}^{(\theta)}}$, where $L^{(\theta)}=(L_{k}^{(\theta)})_{k\in\N\cup\{0\}}$ is the remainder process associated to $\ell^{(\theta)}$. Then $\Xi_\theta:=\sum_{n=1}^\infty \delta_{(\ell_n^{(\theta)},\xi_n^{(\theta)})}$ is a Poisson point process with mean measure 
\begin{equation}
\label{eq:mu_theta}
\mu_\theta(\D t,\D x):=t^{-1}\E^{-\theta t}\p(X_t\in\D x)\D t,
\qquad
(t,x)\in(0,\infty)\times\R.
\end{equation}
\end{prop}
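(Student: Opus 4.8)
The plan is to exhibit $\Xi_\theta$ as an independent marking of the Poisson point process $N:=\sum_{n\in\N}\delta_{\ell_n^{(\theta)}}$ on $(0,\infty)$. By the discussion preceding the statement together with Appendix~\ref{app:PPP_expon-time}, $N$ is a PPP with mean measure $m(\D t):=t^{-1}\E^{-\theta t}\D t$. It therefore suffices to verify that $\xi_n^{(\theta)}$ is precisely a mark attached to the point $\ell_n^{(\theta)}$, sampled from $F(\ell_n^{(\theta)},\cdot)$ independently across $n$ given $N$; the Marking Theorem~\cite[p.~55]{MR1207584} then yields that $\Xi_\theta$ is a PPP with mean measure $m(\D t)F(t,\D x)=\mu_\theta(\D t,\D x)$.

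First I would condition on the whole stick-breaking process $\ell^{(\theta)}$, equivalently on the remainder sequence $L^{(\theta)}$ (note $L_n^{(\theta)}=\sum_{k>n}\ell_k^{(\theta)}$, so in particular $L_0^{(\theta)}=T_\theta=\sum_{k\ge1}\ell_k^{(\theta)}$). Under this conditioning the intervals $(L_n^{(\theta)},L_{n-1}^{(\theta)}]$, $n\in\N$, are deterministic, pairwise disjoint, have lengths $\ell_n^{(\theta)}$, and partition $(0,T_\theta]$. Since $X$ is independent of $\ell^{(\theta)}$ and has independent and stationary increments, the variables $\xi_n^{(\theta)}=X_{L_{n-1}^{(\theta)}}-X_{L_{n}^{(\theta)}}$ are, conditionally on $\ell^{(\theta)}$, mutually independent with $\xi_n^{(\theta)}\sim F(\ell_n^{(\theta)},\cdot)$.

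The only delicate point is passing from conditioning on the ordered sequence $\ell^{(\theta)}$ to conditioning on the unordered configuration $N$ required by the Marking Theorem. This causes no trouble because the kernel $F(t,\cdot)$ depends on the location $t$ alone: for any nonnegative measurable functional $\Phi$ of point configurations on $(0,\infty)\times\R$, the conditional expectation $\e[\Phi(\Xi_\theta)\mid\ell^{(\theta)}]$ is invariant under permuting $(\ell_n^{(\theta)})_{n\in\N}$, hence is $\sigma(N)$-measurable, so $\e[\Phi(\Xi_\theta)\mid N]=\e[\Phi(\Xi_\theta)\mid\ell^{(\theta)}]$ coincides with the law produced by independently $F$-marking $N$. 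Equivalently, and fully self-containedly, one may bypass the Marking Theorem by computing the Laplace functional directly: for nonnegative measurable $g$ on $(0,\infty)\times\R$, conditioning on $\ell^{(\theta)}$ gives
\[
\e\exp\Big(-\sum_{n\in\N}g(\ell_n^{(\theta)},\xi_n^{(\theta)})\Big)
=\e\prod_{n\in\N}\int_\R \E^{-g(\ell_n^{(\theta)},x)}F(\ell_n^{(\theta)},\D x)
=\e\exp\Big(-\sum_{n\in\N}\hat g(\ell_n^{(\theta)})\Big),
\]
where $\hat g(t):=-\log\int_\R \E^{-g(t,x)}F(t,\D x)\ge0$; applying to $\hat g$ the Laplace functional of the PPP $N$ turns the right-hand side into $\exp\big(-\iint_{(0,\infty)\times\R}(1-\E^{-g(t,x)})\,\mu_\theta(\D t,\D x)\big)$, which is the Laplace functional of a PPP with mean measure $\mu_\theta$, and identifies $\Xi_\theta$ accordingly. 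The main obstacle is thus purely one of bookkeeping: pinning down the conditional independence of the increments over the stick-breaking intervals and reconciling the ordered-sequence and unordered-configuration viewpoints; no input on $X$ beyond stationarity and independence of increments is needed.
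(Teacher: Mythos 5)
Your proposal is correct and follows essentially the same route as the paper, which proves the proposition implicitly through the discussion immediately preceding it: the sticks $\sum_n\delta_{\ell_n^{(\theta)}}$ form a PPP with mean measure $t^{-1}\E^{-\theta t}\D t$ (Appendix~\ref{app:PPP_expon-time} and~\eqref{eq:stick_breaking_expectation}), and marking each stick with an increment of $X$ sampled from $F(\ell_n^{(\theta)},\cdot)$ gives the result via the Marking Theorem. Your additional care with the conditional independence of the increments over the stick intervals, the ordered-versus-unordered configuration issue, and the optional Laplace-functional verification merely makes explicit what the paper leaves to the reader.
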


An immediate corollary of Theorem~\ref{thm:SB-representation} and Proposition~\ref{prop:SB-PPP}
characterises the laws of the supremum and infimum of $X$ on the exponential time horizon $T_\theta$.

\begin{cor}
\label{cor:Laplace-extremaLevy-Exp}
Let $T_\theta\sim\Exp(\theta)$ be independent of the L\'evy process $X$. Then the moment generating functions of $\ov X_{T_\theta}$ and $-\un X_{T_\theta}$ are given by the following formulae for any $u\ge 0$: 
\begin{align}
\label{eq:Laplace-supLevy-Exp}
\e\big[\E^{-u\ov X_{T_\theta}}\big]
&=\exp\bigg(\int_0^\infty \int_{(0,\infty)}
\big(\E^{-ux}-1\big)\E^{-\theta t}t^{-1}\p(X_t\in\D x)\D t\bigg),\\
\label{eq:Laplace-infLevy-Exp}
\e\big[\E^{u\un X_{T_\theta}}\big]
&=\exp\bigg(\int_0^\infty \int_{(-\infty,0)}
\big(\E^{ux}-1\big)\E^{-\theta t}t^{-1}\p(X_t\in\D x)\D t\bigg).
\end{align}
\end{cor}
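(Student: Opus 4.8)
The plan is to derive Corollary~\ref{cor:Laplace-extremaLevy-Exp} directly from the SB representation of Theorem~\ref{thm:SB-representation} combined with the Poisson point process description in Proposition~\ref{prop:SB-PPP}. First I would fix $\theta>0$, let $T_\theta\sim\Exp(\theta)$ be independent of $X$, and condition on $T_\theta$: since Theorem~\ref{thm:SB-representation} holds for every fixed horizon, the second coordinate of~\eqref{eq:SB-representation} gives
\begin{equation*}
\ov X_{T_\theta}\eqd\sum_{n=1}^\infty\max\{\xi_n^{(\theta)},0\}=\sum_{n=1}^\infty\xi_n^{(\theta)}\1_{\{\xi_n^{(\theta)}>0\}},
\end{equation*}
where $\xi_n^{(\theta)}=X_{L_{n-1}^{(\theta)}}-X_{L_n^{(\theta)}}$ as in Proposition~\ref{prop:SB-PPP}. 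This is an additive functional of the points of the PPP $\Xi_\theta=\sum_{n\ge1}\delta_{(\ell_n^{(\theta)},\xi_n^{(\theta)})}$, namely $\ov X_{T_\theta}\eqd\int g\,\D\Xi_\theta$ with $g(t,x)=\max\{x,0\}=x\1_{\{x>0\}}$.

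Next I would apply the exponential formula (Campbell's theorem) for Poisson point processes: for a PPP $\Xi$ with mean measure $\mu$ and a nonnegative measurable $g$,
\begin{equation*}
\e\big[\E^{-u\int g\,\D\Xi}\big]=\exp\bigg(\int\big(\E^{-ug(t,x)}-1\big)\mu(\D t,\D x)\bigg).
\end{equation*}
Using $\mu=\mu_\theta$ from~\eqref{eq:mu_theta} and $g(t,x)=x\1_{\{x>0\}}$, the factor $\E^{-ug(t,x)}-1$ vanishes on $\{x\le0\}$, so the integral reduces to $\int_0^\infty\int_{(0,\infty)}(\E^{-ux}-1)t^{-1}\E^{-\theta t}\p(X_t\in\D x)\D t$, which is exactly~\eqref{eq:Laplace-supLevy-Exp}. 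For~\eqref{eq:Laplace-infLevy-Exp} I would run the same argument with $-X$ in place of $X$: $-X$ is again a L\'evy process, $\un X_{T_\theta}=-\ov{(-X)}_{T_\theta}$, its PPP has mean measure $t^{-1}\E^{-\theta t}\p(-X_t\in\D x)\D t$, and changing variables $x\mapsto-x$ turns the region $(0,\infty)$ into $(-\infty,0)$ and $\E^{-ux}$ into $\E^{ux}$, giving the stated formula.

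The only genuine subtlety is integrability, i.e. making sure the exponential formula applies and the integrals are well-defined (possibly equal to $-\infty$, giving a zero MGF, which is consistent). For $u\ge0$ one has $|\E^{-ux}-1|\le\min\{1,ux\}$ on $x>0$, and $\int_0^\infty\int_{(0,\infty)}\min\{1,ux\}t^{-1}\E^{-\theta t}\p(X_t\in\D x)\D t\le\int_0^\infty t^{-1}\E^{-\theta t}\min\{1,u\,\e[\max\{X_t,0\}]\}\D t$; since $\e[\max\{X_t,0\}]\le\e|X_t|\le Ct$ for small $t$ by standard L\'evy moment estimates, the singularity at $t=0$ is integrable, and the tail is controlled by $\E^{-\theta t}$. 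Hence Campbell's theorem applies verbatim and no truncation is needed; this integrability check is the main (and only mildly technical) obstacle, everything else being a direct substitution.
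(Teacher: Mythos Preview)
Your approach is essentially identical to the paper's: represent $\ov X_{T_\theta}$ as $\int_{(0,\infty)^2}x\,\Xi_\theta(\D t,\D x)$ via Theorem~\ref{thm:SB-representation} and Proposition~\ref{prop:SB-PPP}, apply Campbell's formula, and deduce~\eqref{eq:Laplace-infLevy-Exp} by passing to $-X$. The paper does exactly this in three lines.

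One remark on your final paragraph: the integrability check is both unnecessary and, as written, incorrect. The Laplace functional identity $\e[\E^{-\int f\,\D\Xi}]=\exp(\int(\E^{-f}-1)\,\D\mu)$ holds for \emph{every} nonnegative measurable $f$ with no integrability hypothesis (see Kingman, \emph{Poisson Processes}, p.~28); both sides are allowed to be zero. Your bound $\e|X_t|\le Ct$ fails in general: for Brownian motion $\e|X_t|\asymp\sqrt{t}$, and for a Cauchy process $\e|X_t|=\infty$ for all $t>0$, so the subsequent estimate $\min\{1,u\,\e[X_t^+]\}=1$ would leave $t^{-1}$ non-integrable at $0$. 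Simply drop that paragraph; the formula is valid as stated, and finiteness of the exponent then follows \emph{a posteriori} from $\ov X_{T_\theta}<\infty$ a.s.
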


\begin{proof}
By Theorem~\ref{thm:SB-representation}, $\ov X_{T_\theta}\eqd\int_{(0,\infty)^2}x\Xi_\theta(\D t,\D x)$, where $\Xi_\theta$ is  a PPP with mean measure $\mu_\theta$. Campbell's formula~\cite[p.~28]{MR1207584} implies~\eqref{eq:Laplace-supLevy-Exp}. Applying~\eqref{eq:Laplace-supLevy-Exp} to $-X$ yields~\eqref{eq:Laplace-infLevy-Exp}.
\end{proof}



Recall that $0$ is \emph{regular} for the half-line $(0,\infty)$ if $X$ visits $(0,\infty)$ almost surely immediately after time 0, i.e. 
$\p(\bigcap_{t>0}\bigcup_{s\le t}\{X_s>0\})=1$. 

\begin{thm}[Rogozin's criterion]
\label{thm:Rogozin-short}
The starting point $0$ of $X$ is regular for $(0,\infty)$ if and only if
\begin{equation}
\label{eq:Rogozin}
\int_0^1 t^{-1}\p(X_t>0)\D t=\infty.
\end{equation}
\end{thm}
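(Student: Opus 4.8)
The plan is to deduce this directly from the SB representation of $\ov\chi_{T_\theta}(X)$ on an exponential time horizon, using Proposition~\ref{prop:SB-PPP}, together with the simple observation that regularity of $0$ for $(0,\infty)$ is equivalent to $\ov\tau_{T_\theta}(X)=0$ a.s. The point is that, since $\ov\tau$ is increasing and $T_\theta>0$ a.s., the event $\{\ov\tau_{T_\theta}(X)=0\}$ coincides (up to a null set, using the quasi-left-continuity / stationarity of $X$) with the event that $X$ spends a null amount of time below its running supremum near $0$, which is exactly the regularity event $\bigcap_{t>0}\bigcup_{s\le t}\{X_s>0\}$ up to the usual zero--one considerations. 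So the first step is to record the equivalence ``$0$ regular for $(0,\infty)$'' $\iff$ ``$\p(\ov\tau_{T_\theta}(X)=0)=1$'' $\iff$ ``$\p(\ov\tau_{T_\theta}(X)>0)=0$''; by Blumenthal's zero--one law this probability is $0$ or $1$ regardless, so it suffices to decide which.

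The second step is to express $\ov\tau_{T_\theta}(X)$ via the SB representation. By Theorem~\ref{thm:SB-representation} applied with horizon $T_\theta$,
\[
\ov\tau_{T_\theta}(X)\eqd \sum_{n=1}^\infty \ell_n^{(\theta)}\1_{\{\xi_n^{(\theta)}>0\}},
\]
and by Proposition~\ref{prop:SB-PPP} the collection $\{(\ell_n^{(\theta)},\xi_n^{(\theta)})\}_{n\in\N}$ is a PPP on $(0,\infty)\times\R$ with mean measure $\mu_\theta(\D t,\D x)=t^{-1}\E^{-\theta t}\p(X_t\in\D x)\D t$. Thus $\ov\tau_{T_\theta}(X)$ is equal in law to the sum of the $t$-coordinates of the points of this PPP that fall in the region $(0,\infty)\times(0,\infty)$, i.e. a Poisson integral $\int t\, N(\D t,\D x)$ over $(0,\infty)^2$. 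Such a sum is $0$ a.s. if and only if there are \emph{no} points of the PPP in $(0,\infty)\times(0,\infty)$, which happens with probability $\exp(-\mu_\theta((0,\infty)\times(0,\infty)))$; and this equals $0$ (forcing the sum to be a.s. strictly positive, hence $0$ is \emph{not} regular fails... ) precisely when $\mu_\theta((0,\infty)\times(0,\infty))=\infty$. Concretely,
\[
\mu_\theta\big((0,\infty)\times(0,\infty)\big)=\int_0^\infty t^{-1}\E^{-\theta t}\p(X_t>0)\,\D t,
\]
which, since $\E^{-\theta t}$ is bounded above and below by positive constants on $(0,1]$ and the integrand near $\infty$ is integrable, is infinite if and only if $\int_0^1 t^{-1}\p(X_t>0)\,\D t=\infty$. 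Putting the pieces together: $\int_0^1 t^{-1}\p(X_t>0)\D t=\infty$ $\iff$ the PPP has infinitely many points in $(0,\infty)^2$ with $t$-coordinates accumulating at $0$, which by stick-breaking forces $\ov\tau_{T_\theta}(X)>0$...

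Let me restate the direction carefully, since the dichotomy must be pinned down. If $\mu_\theta((0,\infty)^2)<\infty$, there are a.s. finitely many indices $n$ with $\xi_n^{(\theta)}>0$, and the smallest stick length among them is a.s. positive (sticks are a.s. nested and positive), so $\ov\tau_{T_\theta}(X)>0$ a.s., whence $0$ is \emph{not} regular. Conversely, if $\mu_\theta((0,\infty)^2)=\infty$, there are infinitely many such indices, and since $\sum_n \ell_n^{(\theta)}=T_\theta<\infty$ forces $\ell_n^{(\theta)}\to 0$, one needs a small extra argument that the infimum of these selected stick-times is $0$: a clean way is to note that, conditionally on $T_\theta$, the indices $n$ carrying a positive increment are thinned from all indices with ``probability'' governed by $\p(X_{\ell_n}>0\mid \ell_n)$, and because the mean measure assigns infinite mass to every neighbourhood $(0,\varepsilon)\times(0,\infty)$ of the origin, there are a.s. infinitely many selected points with arbitrarily small $t$-coordinate, i.e. $\inf\{\ell_n^{(\theta)}:\xi_n^{(\theta)}>0\}=0$; one then shows this forces $\ov\tau_{T_\theta}(X)=0$ a.s. by observing that in the concave-majorant picture the first face has slope determined by the largest increment-over-length ratio and a.s. positive length is impossible when selected sticks accumulate at $0$ with positive increments — equivalently, apply Theorem~\ref{thm:SB-representation} to the infimum of $-X$ and use that $\un\tau_{T_\theta}(-X)$ (the last-time-of-min complement) exhausts $T_\theta$. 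The main obstacle is precisely this last equivalence — translating ``infinitely many arbitrarily-short selected sticks'' into ``$\ov\tau_{T_\theta}(X)=0$'' — which requires identifying that the \emph{order} in which the SB representation sums the sticks is irrelevant to the total (it is an unordered Poisson sum) but relevant to which stick sits at the origin; the cleanest route is to invoke the convex-minorant interpretation from Theorem~\ref{thm:levy-minorant}, where $\ov\tau_{T_\theta}(X)$ is literally the total length of the faces with negative slope placed before the first non-negative-slope face, and faces are sorted by slope, so an accumulation of positive-increment short sticks at $0$ pushes the first non-negative-slope face to time $0$.
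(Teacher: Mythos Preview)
Your equivalence in the first step is inverted, and this error propagates through the rest of the argument. The event $\{\ov\tau_{T_\theta}(X)=0\}$ means the supremum of $X$ on $[0,T_\theta]$ is attained at time $0$, i.e.\ $\ov X_{T_\theta}=X_0=0$, i.e.\ $X$ \emph{never enters} $(0,\infty)$ on $[0,T_\theta]$. Thus $0$ is regular for $(0,\infty)$ if and only if $\p(\ov\tau_{T_\theta}(X)=0)=0$, not $=1$. (Blumenthal's $0$--$1$ law applies to the germ event $\bigcap_{t>0}\{\ov X_t>0\}$, which has probability $0$ or $1$; but $\p(\ov\tau_{T_\theta}(X)=0)$ itself need not be $0$ or $1$ --- indeed it is strictly between $0$ and $1$ whenever $0$ is irregular and $X$ is not a.s.\ non-increasing.) Your claim in the third paragraph that ``$\mu_\theta((0,\infty)^2)<\infty$ implies $\ov\tau_{T_\theta}(X)>0$ a.s.'' is therefore false: a Poisson number of points with finite mean is zero with probability $\exp(-\mu_\theta((0,\infty)^2))>0$, and on that event $\ov\tau_{T_\theta}(X)=0$.

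Once the direction is corrected, the ``main obstacle'' you identify evaporates and the argument becomes one line. Since $\{\ov\tau_{T_\theta}(X)=0\}=\{\ov X_{T_\theta}=0\}=\{\Xi_\theta((0,\infty)^2)=0\}$, we have
\[
\p(\ov\tau_{T_\theta}(X)=0)
=\exp\!\Big(-\int_0^\infty t^{-1}\E^{-\theta t}\p(X_t>0)\,\D t\Big),
\]
which is positive iff $\int_0^1 t^{-1}\p(X_t>0)\,\D t<\infty$. There is no need to argue about accumulation of stick lengths, sorting of faces, or whether ``the first non-negative-slope face is pushed to time $0$''; the entire difficulty you encountered arose from trying to force $\ov\tau_{T_\theta}(X)$ to be $0$ a.s.\ in the regular case, which is the opposite of what happens. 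This corrected version is exactly the paper's proof (which works with $\ov X_{T_\theta}$ rather than $\ov\tau_{T_\theta}(X)$, but the two vanish simultaneously).
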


\begin{proof}
Let the time horizon $T_\theta\sim\Exp(\theta)$ and random sequences $\ell^{(\theta)}$ and $\xi^{(\theta)}$ be as in Proposition~\ref{prop:SB-PPP} above. As $t\mapsto\ov X_t$ is non-decreasing a.s., $0$ is not regular for $(0,\infty)$ 
if and only if $\p(\ov X_{T_\theta}=0)>0$.
Since $\ov X_{T_\theta}\eqd\int_{A}x\Xi_\theta(\D t,\D x)$, 
where $A:=(0,\infty)\times (0,\infty)$,
the event $\{\ov X_{T_\theta}=0\}$ is equal to the event 
$\{\Xi_\theta(A)=0\}$
that the PPP $\Xi_\theta=\sum_{n=1}^\infty \delta_{(\ell_n^{(\theta)},\xi_n^{(\theta)})}$ has no points in $A$.
Thus, $0$ is not regular for $(0,\infty)$ 
if and only if 
\[
\p\left(\ov X_{T_\theta}=0\right)=\p\big(\Xi_\theta\left(A\right)=0\big)
=\exp\big(-\e\Xi_\theta\left(A\right)\big)
=\exp\left(-\int_0^\infty t^{-1}\E^{-\theta t}\p(X_t>0)\D t\right)>0
\]
for some positive $\theta$, which is equivalent to~\eqref{eq:Rogozin}. 
\end{proof}

We can now characterise the behaviour of $X$ as $t\to\infty$.

\begin{thm}[Rogozin]
\label{thm:Rogozin-long}
Possibly degenerate variables $\ov X_\infty\coloneqq\sup_{t\ge 0} X_t$ and $\un X_\infty\coloneqq\inf_{t\ge 0} X_t$ satisfy 
\begin{align}
\label{eq:Laplace-supX-infinity}
\e\big[\E^{-u\ov X_\infty}\big]
&=\exp\bigg(\int_0^\infty \int_{(0,\infty)}
\big(\E^{-ux}-1\big)t^{-1}\p(X_t\in\D x)\D t\bigg),\\
\label{eq:Laplace-infX-infinity}
\e\big[\E^{u\un X_\infty}\big]
&=\exp\bigg(\int_0^\infty \int_{(-\infty,0)}
\big(\E^{ux}-1\big)t^{-1}\p(X_t\in\D x)\D t\bigg),
\end{align}
for any $u\ge 0$. Define the integrals
\[
I_+\coloneqq\int_1^\infty t^{-1}\p(X_t>0)\D t
\qquad \& \qquad
I_-\coloneqq\int_1^\infty t^{-1}\p(X_t<0)\D t.
\]
Then the following statements hold for any non-constant L\'evy process $X$:
\item[~\nf{(a)}] if $I_+<\infty$, then $\ov X_\infty$ is non-degenerate ($\ov X_\infty <\infty$ a.s.) infinitely divisible and 
$\lim_{t\to\infty}X_t=-\infty$;
\item[~\nf{(b)}] if $I_-<\infty$, then $\un X_\infty$ is non-degenerate ($\un X_\infty >-\infty$ a.s.) infinitely divisible and 
$\lim_{t\to\infty}X_t=\infty$;
\item[~\nf{(c)}] if $I_+=I_-=\infty$, then 
$\limsup_{t\to\infty}X_t=-\liminf_{t\to\infty}X_t=\infty$. 
\end{thm}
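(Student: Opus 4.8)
The plan is to obtain everything from Corollary~\ref{cor:Laplace-extremaLevy-Exp} by sending $\theta\downarrow 0$. First I would prove the two Laplace transform identities \eqref{eq:Laplace-supX-infinity}--\eqref{eq:Laplace-infX-infinity}. Since $t\mapsto\ov X_t$ is non-decreasing a.s., $\ov X_{T_\theta}\uparrow\ov X_\infty$ a.s.\ as $\theta\downarrow 0$ (using that $T_\theta\to\infty$ a.s.\ along any sequence $\theta_k\downarrow 0$, up to passing to a coupling where the $T_\theta$ are nested, or simply using monotone convergence in distribution). Hence $\E^{-u\ov X_{T_\theta}}\downarrow\E^{-u\ov X_\infty}$ with the convention $\E^{-u\cdot\infty}=0$, and monotone/dominated convergence gives $\e[\E^{-u\ov X_{T_\theta}}]\to\e[\E^{-u\ov X_\infty}]$. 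On the right-hand side of \eqref{eq:Laplace-supLevy-Exp}, the integrand $(\E^{-ux}-1)\E^{-\theta t}t^{-1}\p(X_t\in\D x)$ is negative and increases pointwise (in absolute value decreases... careful: $\E^{-\theta t}\uparrow 1$, and $\E^{-ux}-1\le 0$, so the integrand decreases to $(\E^{-ux}-1)t^{-1}\p(X_t\in\D x)$), so by monotone convergence the exponent converges (possibly to $-\infty$), yielding \eqref{eq:Laplace-supX-infinity}; \eqref{eq:Laplace-infX-infinity} follows by applying this to $-X$.

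Next I would read off (a)--(c). For (a): if $I_+<\infty$, then, splitting $\int_0^\infty = \int_0^1 + \int_1^\infty$ in the exponent of \eqref{eq:Laplace-supX-infinity}, the $\int_1^\infty$ part is bounded in absolute value by $\int_1^\infty t^{-1}\p(X_t>0)\D t = I_+ <\infty$ uniformly in $u\ge 0$, while for the $\int_0^1$ part I would use $|\E^{-ux}-1|\le 1$ for $x>0$, $u\ge 0$, so it is bounded by $\int_0^1 t^{-1}\p(X_t>0)\D t$; but this last integral could be infinite (precisely when $0$ is regular for $(0,\infty)$, by Theorem~\ref{thm:Rogozin-short}). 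The fix is the standard one: by dominated convergence the exponent of \eqref{eq:Laplace-supX-infinity} tends to $0$ as $u\downarrow 0$ only after one checks $|\E^{-ux}-1|\le ux\wedge 1$ fails to be integrable in general — instead one argues directly that $\e[\E^{-u\ov X_\infty}]>0$ for $u>0$: the exponent is $\ge -I_+ - \int_0^1 t^{-1}\p(X_t>0)\D t$, and the latter integral, while possibly infinite, is multiplied against $(\E^{-ux}-1)$ which is close to $0$ when $x$ is small; more carefully, $\int_{(0,\infty)}|\E^{-ux}-1|\p(X_t\in\D x)\le u\,\e[X_t\1_{\{0<X_t\le 1\}}] + \p(X_t>1)$, and on $t\in(0,1)$ one controls $\e[X_t\1_{\{0<X_t\le 1\}}]$ by a constant using $\e|X_t|\le Ct$ for small $t$ after truncating the large jumps of $X$. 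This shows the $\int_0^1$ contribution is finite, hence $\e[\E^{-u\ov X_\infty}]>0$ for all $u\ge 0$, so $\p(\ov X_\infty<\infty)=1$, i.e.\ $\ov X_\infty$ is non-degenerate; infinite divisibility is immediate since $\e[\E^{-u\ov X_\infty}]$ has the form $\exp(\int(\E^{-ux}-1)\Pi(\D x))$ for the (possibly infinite-mass but integrable-near-$0$) measure $\Pi(\D x)=\int_0^\infty t^{-1}\p(X_t\in\D x)\D t$ on $(0,\infty)$, which is exactly the Laplace exponent of a subordinator-type infinitely divisible law. Finally, $\ov X_\infty<\infty$ a.s.\ forces $\limsup_{t\to\infty}X_t<\infty$; combined with $X$ non-constant and a Hewitt--Savage $0$--$1$ law argument (or: if $\liminf X_t>-\infty$ too then $X$ would be bounded, impossible for a non-constant Lévy process unless it oscillates, but boundedness on both sides is excluded), one gets $\liminf_{t\to\infty}X_t=-\infty$, i.e.\ $X_t\to-\infty$. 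Part (b) is part (a) applied to $-X$. For (c): if $I_+=\infty$ then the exponent in \eqref{eq:Laplace-supX-infinity} is $-\infty$ for every $u>0$ (the $\int_1^\infty$ part alone diverges, since $\E^{-ux}-1$ is bounded away from $0$ on the event $X_t>1$ and $\p(X_t>1)\to$ something, or more simply $\int_1^\infty\int_{(0,\infty)}(1-\E^{-ux})t^{-1}\p(X_t\in\D x)\D t\ge (1-\E^{-u})\int_1^\infty t^{-1}\p(X_t>1)\D t$ — so one needs $\int_1^\infty t^{-1}\p(X_t>1)\D t=\infty$, which follows from $I_+=\infty$ because $\p(X_t>0)\le\p(X_t>1)+\p(0<X_t\le1)$ and the middle-range contribution $\int_1^\infty t^{-1}\p(0<X_t\le 1)\D t$ is finite by a local-limit / Chebyshev estimate $\p(0<X_t\le 1)=O(t^{-1/2})$ when $X$ has nonzero Gaussian part, or more robustly by the argument that $t\mapsto\p(X_t>1)$ dominates $t\mapsto\p(X_t>0)$ up to an integrable error because $X_t$ spreads out). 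Hence $\e[\E^{-u\ov X_\infty}]=0$, giving $\ov X_\infty=\infty$ a.s., i.e.\ $\limsup_{t\to\infty}X_t=\infty$; symmetrically $I_-=\infty$ gives $\liminf_{t\to\infty}X_t=-\infty$.

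The main obstacle is the nuisance term $\int_0^1 t^{-1}\p(0<X_t\le 1)\D t$ (and its mirror), which can genuinely be infinite and must be shown harmless: it enters with a factor $|\E^{-ux}-1|\le ux$ for small $x$ in \eqref{eq:Laplace-supX-infinity}, so the contribution $u\int_0^1 t^{-1}\e[X_t\1_{\{0<X_t\le 1\}}]\D t$ is finite provided $\e[X_t\1_{\{0<X_t\le1\}}]=O(t)$ near $0$, which holds after removing the finitely many large jumps (the remaining part has finite mean growing linearly). I would isolate this as a short lemma: for any Lévy process $X$ there is $C<\infty$ with $\e[|X_t|\wedge 1]\le Ct$ for $t\in(0,1]$. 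With that lemma in hand, the rest is bookkeeping with monotone and dominated convergence, and the long-run trichotomy (a)--(c) follows because $\ov X_\infty$ and $\un X_\infty$ are each either a.s.\ finite or a.s.\ infinite (each being the a.s.\ monotone limit of $\ov X_{T_\theta}$, $\un X_{T_\theta}$, whose finiteness is governed by whether the corresponding exponent is finite), and a non-constant Lévy process cannot have both $\ov X_\infty<\infty$ and $\un X_\infty>-\infty$.
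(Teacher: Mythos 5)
Your derivation of \eqref{eq:Laplace-supX-infinity}--\eqref{eq:Laplace-infX-infinity} (couple the horizons as $T_1/\theta$, let $\theta\downarrow0$, monotone convergence on both sides of \eqref{eq:Laplace-supLevy-Exp}--\eqref{eq:Laplace-infLevy-Exp}) is exactly the paper's argument, and the infinite-divisibility remark is fine. The problems are in how you extract (a)--(c) from the identities.

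First, the auxiliary lemma you propose to isolate is false: there is no constant $C$ with $\e[|X_t|\wedge 1]\le Ct$ for $t\in(0,1]$ (for Brownian motion $\e[|X_t|\wedge1]\asymp\sqrt{t}$), and likewise ``$\e|X_t|\le Ct$ after truncating the large jumps'' fails because the martingale part has $L^1$-norm of order $\sqrt t$. What you actually need, $\int_0^1 t^{-1}\,\e[\min\{X_t^+,1\}]\,\D t<\infty$, is true (an $O(\sqrt t)$ bound suffices), but the detour through the L\'evy--It\^o decomposition is unnecessary: since $T_\theta<\infty$ and the paths are c\`adl\`ag, $\ov X_{T_\theta}<\infty$ a.s., so the exponent in \eqref{eq:Laplace-supLevy-Exp} is finite, and hence $\int_0^1\int_{(0,\infty)}(1-\E^{-ux})t^{-1}\p(X_t\in\D x)\D t\le \E\cdot\int_0^1\int_{(0,\infty)}(1-\E^{-ux})\E^{-t}t^{-1}\p(X_t\in\D x)\D t<\infty$. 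This is the route implicit in the paper and it repairs your step (a) without any new lemma.

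Second, two genuine gaps remain. (i) For part (c) you must show that $I_+=\infty$ forces the exponent in \eqref{eq:Laplace-supX-infinity} to be $-\infty$, and you correctly reduce this to $\int_1^\infty t^{-1}\p(0<X_t\le1)\D t<\infty$; but your justification (``local limit/Chebyshev when there is a Gaussian part'', ``$X_t$ spreads out'') is not a proof and does not cover a general non-constant $X$. A concrete elementary fix: unless $\p(X_t>0)=0$ for all $t$ (in which case $I_+=0$), there are $t_0,\,c>0$ with $\p(X_{t_0}>1)=c$ (use $\p(X_{ns}>n\ve)\ge\p(X_s>\ve)^n$), and the Markov property gives $\p(0<X_t\le1)\le c^{-1}\p(X_{t+t_0}>1)$, so $\int_1^\infty t^{-1}\p(X_t>1)\D t<\infty$ would imply $I_+<\infty$; alternatively one can prove a concentration-function bound, but some such argument must be supplied. (ii) For the limit in (a) you write ``one gets $\liminf_{t\to\infty}X_t=-\infty$, i.e.\ $X_t\to-\infty$'', which is a non sequitur: excluding two-sided boundedness only controls the liminf. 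What is needed is that $\limsup_{t\to\infty}X_t$ is a.s.\ a constant $c\in[-\infty,\infty]$ which cannot be finite for non-constant $X$ (if $c\in\R$ then $c=X_s+c$ a.s.\ for every $s$), so that $\ov X_\infty<\infty$ forces $\limsup_{t\to\infty}X_t=-\infty$; your Hewitt--Savage reference points the right way, but this step must be stated and proved, not inferred from $\liminf=-\infty$. Note that the paper proves the limits by a different route: it first shows $I_++I_-=\infty$ (which requires the bound $\p(X_t=0)=O(t^{-1/2})$ for driftless compound Poisson processes), deduces $\ov X_\infty=\infty$ a.s.\ when $I_-<\infty$, and then uses first-passage times, the strong Markov property and Borel--Cantelli; your zero--one-law route is viable and arguably shorter, but only once the ``no finite limsup'' statement is actually established.
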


\begin{proof}
Let $T_1\sim\Exp(1)$ be independent of $X$ and note 
$T_1/\theta \sim \Exp(\theta)$ for any $\theta>0$.
Since $\ov X_{T_1/\theta}\to \ov X_\infty$ as $\theta\to 0$ a.s.,
the corresponding Laplace transforms converge pointwise. Thus the monotone convergence theorem applied to the right-hand sides of~\eqref{eq:Laplace-supLevy-Exp}--\eqref{eq:Laplace-infLevy-Exp}
implies~\eqref{eq:Laplace-supX-infinity}--\eqref{eq:Laplace-infX-infinity}.
Identity~\eqref{eq:Laplace-supX-infinity} (resp.~\eqref{eq:Laplace-infX-infinity}) implies that 
$I_+<\infty$ (resp. $I_-<\infty$) if and only if 
$\e \exp(-u\ov X_{\infty})>0$ 
(resp. $\e \exp(u\un X_{\infty})>0$)
for all $u\geq0$. 
This implies part (c) and all but the limits in parts (a)~\&~(b).

It remains to prove the limit in (b), as the proof of the limit in (a) is analogous. First we show that $I_++I_-=\infty$. 
Since $X$ is not constant, by
$I_++I_-=\int_1^\infty t^{-1}(1-\p(X_t=0))\D t$,  it suffices to prove $\int_1^\infty t^{-1}\p(X_t=0)\D t<\infty$. 
As shown in the proof of Corollary~\ref{cor:unique-maximum}
above, if $X$ is not a driftless compound Poisson process, the function
$t\mapsto \p(X_t=0)$
is zero for Lebesgue a.e. $t>0$. 
If $X$ is driftless compound Poisson, then the function $t\mapsto\sqrt{t}\p(X_t=0)$ is bounded. Indeed, suppose without loss of generality that $X$ has positive jumps and consider the a decomposition $X_t=Y_t+S_{N_t}$, where the compound Poisson process $Y$, 
the random walk $S$ with strictly positive increments and the Poisson process $N$ with intensity $\lambda>0$ are independent. Then  
$\p(X_t=0)=\int_{\R}\p(S_{N_t}=-x)\p(Y_t\in\D x) \le\sup_{x\in\R}\p(S_{N_t}=-x)$.  For $x\in\R$ we have $\sum_{n=0}^\infty \1_{\{S_n=-x\}}\le 1$ 
(since $n\mapsto S_n$ is strictly increasing) and 
\begin{equation}
\label{eq:prob_zero_bound}
\p(S_{N_t}=-x)
\le \E^{-\lambda t}\sup_{m\in\N\cup\{0\}}\{(\lambda t)^m/m!\}
\e\sum_{n=0}^\infty \1_{\{S_n=-x\}}\le \E^{-\lambda t}\sup_{m\in\N\cup\{0\}}\{(\lambda t)^m/m!\},
\end{equation}
Since $m\mapsto \E^{-\lambda t}(\lambda t)^m/m!$ is maximised at an integer $m$ approximately equal to $\lambda t$, by Stirling's formula and~\eqref{eq:prob_zero_bound}, a multiple of $t^{-1/2}$ is an upper bound for $\p(X_t=0)$ for all sufficiently large $t$.

Assume $I_-<\infty$ and pick a sequence $a_n\uparrow\infty$ such that $\p(\un{X}_{\infty}<-a_n)\le 1/n^2$ for $n\in\N$. Since $I_-<\infty$, we must have $I_+=\infty$ and thus $\ov X_\infty=\infty$ a.s., implying that the hitting time $S_n$ of $X$ of the level $2a_n$ is a.s. finite for every $n\in\N$. Define the sets 
\[
B_n=\{X_t<a_n\text{ for some }t>S_n\}\subset\{X_t-X_{S_n}<-a_n\text{ for some }t>S_n\},
\]
and note that $\p(B_n)\le\p(\un{X}_{\infty}<-a_n)\le 1/n^2$ by the strong Markov property. The Borel--Cantelli lemma then shows that $\p(B_n\text{ i.o.})=0$ and hence $\p(\liminf_{t\to\infty}X_t<\infty)=0$. In other words, we have $\lim_{t\to\infty}X_t=\infty$ a.s., completing the proof.
\end{proof}

Another easy corollary of 
Theorem~\ref{thm:SB-representation}
is the Wiener-Hopf factorisation. 

\begin{thm}[Wiener-Hopf factorisation]
\label{thm:WH}
Let the time horizon $T_\theta\sim\Exp(\theta)$ be independent of $X$.
The random vectors $(\ov\tau_{T_\theta}(X),\ov X_{T_\theta})$ and $(T_\theta-\ov\tau_{T_\theta}(X), X_{T_\theta}-\ov X_{T_\theta})$ are independent, infinitely divisible with respective Fourier-Laplace transforms  given by 
\begin{align}
	\label{eq:WH+}
	\Psi_\theta^+(u,v)
	&:=\e\big[\E^{u \ov\tau_{T_\theta}(X)+v\ov X_{T_\theta}}\big]
	=\frac{\varphi_+(-\theta,0)}{\varphi_+(u-\theta,v)},\\
	\label{eq:WH-}
	\Psi_\theta^-(u,-v)
	&:=\e\big[\E^{u (T_\theta-\ov\tau_{T_\theta}(X))-v( X_{T_\theta}-\ov X_{T_\theta})}\big]
	=\frac{\varphi_-(-\theta,0)}{\varphi_-(u-\theta,v)},
\end{align}
for any $u,v\in\C$ with $\Re u,\Re v\le 0$.
Here
$\varphi_\pm$ is defined as follows:
set $A_+:=(0,\infty)$, $A_-:=(-\infty,0]$, 
\begin{equation}
	\label{eq:WH-FL}
	\varphi_\pm(a,b)
	:=\exp\left(\int_0^\infty\int_{A_\pm}
	\big(\E^{-t}-\E^{at+b|x|}\big)t^{-1}\p(X_t\in\D x)\D t\right),
\end{equation}
for any $a,b\in\C$ such that the integrals in~\eqref{eq:WH-FL} exist, including the  $\Re a<0$, $\Re b\le 0$.
The characteristic exponent $\Psi$ of $X_1$ (i.e. $\e \exp(vX_1)=\exp\Psi(v)$ for $v\in\C$ with $\Re v =0$)
satisfies
\begin{equation}
    \label{eq:WH-product}
\theta/(\theta-u-\Psi(v))
=\Psi_\theta^+(u,v)\Psi_\theta^-(u,v),
\qquad u,v\in \C\text{ with $\Re v=\Re u=0$.}
\end{equation}
\end{thm}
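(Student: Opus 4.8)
The plan is to derive the Wiener--Hopf factorisation directly from the SB representation of the extremal vector together with the Poisson point process description in Proposition~\ref{prop:SB-PPP}. The key observation is that the triplet $\ov\chi_{T_\theta}(X)=(X_{T_\theta},\ov X_{T_\theta},\ov\tau_{T_\theta}(X))$ is, by Theorem~\ref{thm:SB-representation}, a deterministic functional of the PPP $\Xi_\theta=\sum_n\delta_{(\ell_n^{(\theta)},\xi_n^{(\theta)})}$ with mean measure $\mu_\theta(\D t,\D x)=t^{-1}\E^{-\theta t}\p(X_t\in\D x)\D t$. Indeed, from~\eqref{eq:SB-representation},
\[
\ov\tau_{T_\theta}(X)=\sum_{n}\ell_n^{(\theta)}\1_{\{\xi_n^{(\theta)}>0\}},
\quad
\ov X_{T_\theta}=\sum_{n}\max\{\xi_n^{(\theta)},0\},
\quad
T_\theta-\ov\tau_{T_\theta}(X)=\sum_n\ell_n^{(\theta)}\1_{\{\xi_n^{(\theta)}\le 0\}},
\]
and, crucially, $X_{T_\theta}-\ov X_{T_\theta}=\sum_n\xi_n^{(\theta)}-\sum_n\max\{\xi_n^{(\theta)},0\}=\sum_n\min\{\xi_n^{(\theta)},0\}$, using $X_{T_\theta}=\sum_n\xi_n^{(\theta)}$ (a telescoping sum, valid since $L_n^{(\theta)}\to 0$ and $X$ is right-continuous with $X_0=0$). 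Thus the ``$+$'' vector $(\ov\tau_{T_\theta}(X),\ov X_{T_\theta})$ depends only on the restriction of $\Xi_\theta$ to the half-space $(0,\infty)\times(0,\infty)$, while the ``$-$'' vector $(T_\theta-\ov\tau_{T_\theta}(X),X_{T_\theta}-\ov X_{T_\theta})$ depends only on the restriction to $(0,\infty)\times(-\infty,0]$. By the restriction/independence property of Poisson point processes these two restrictions are independent, which immediately yields the independence claim.

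The next step is to compute the two Fourier--Laplace transforms via Campbell's formula. For $\Re u,\Re v\le 0$, applying Campbell's formula to the linear functional $(t,x)\mapsto ut\1_{\{x>0\}}+vx\1_{\{x>0\}}=(ut+vx)\1_{\{x>0\}}$ against $\mu_\theta$ gives
\[
\Psi_\theta^+(u,v)=\exp\left(\int_0^\infty\int_{(0,\infty)}\big(\E^{ut+vx}-1\big)t^{-1}\E^{-\theta t}\p(X_t\in\D x)\D t\right),
\]
and likewise, against the functional $(t,x)\mapsto(ut+vx)\1_{\{x\le 0\}}$,
\[
\Psi_\theta^-(u,-v)=\exp\left(\int_0^\infty\int_{(-\infty,0]}\big(\E^{ut-vx}-1\big)t^{-1}\E^{-\theta t}\p(X_t\in\D x)\D t\right),
\]
where I write $-v|x|=vx$ on $(-\infty,0]$. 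It then remains to reconcile these with the stated formulae involving $\varphi_\pm$. Writing $\E^{-\theta t}=\E^{-t}\E^{(1-\theta)t}$ and using the definition~\eqref{eq:WH-FL}, one checks that $\int\big(\E^{ut+vx}-1\big)t^{-1}\E^{-\theta t}\p(X_t\in\D x)\D t=\log\varphi_+(-\theta,0)-\log\varphi_+(u-\theta,v)$ on $A_+=(0,\infty)$: the ``$-1$'' term produces $\int(\E^{-t}-\E^{-\theta t})t^{-1}\p(X_t\in\D x)\D t$ contributions that assemble into $\log\varphi_+(-\theta,0)$ minus part of the other term, and the algebra matches because $\varphi_+(a,b)$ has integrand $\E^{-t}-\E^{at+b|x|}$ with $|x|=x$ on $A_+$. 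The same bookkeeping works for $\varphi_-$ on $A_-=(-\infty,0]$, using $|x|=-x$ there, which explains the sign convention in~\eqref{eq:WH-} and~\eqref{eq:WH-FL}. Infinite divisibility of both vectors is automatic: each is the integral of a fixed function against a Poisson point process, hence a generalised compound-Poisson-type vector, whose log Fourier--Laplace transform is a Lévy-type integral.

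Finally, to obtain the product identity~\eqref{eq:WH-product}, I restrict to the imaginary axes $\Re u=\Re v=0$ and multiply. Since $\ov\tau_{T_\theta}+ (T_\theta-\ov\tau_{T_\theta})=T_\theta$ and $\ov X_{T_\theta}+(X_{T_\theta}-\ov X_{T_\theta})=X_{T_\theta}$, independence of the ``$+$'' and ``$-$'' vectors gives
\[
\Psi_\theta^+(u,v)\,\Psi_\theta^-(u,v)
=\e\big[\E^{u T_\theta+v X_{T_\theta}}\big].
\]
Conditioning on $T_\theta\sim\Exp(\theta)$, independent of $X$, and using $\e[\E^{vX_t}]=\E^{t\Psi(v)}$ for $\Re v=0$, the right-hand side equals $\int_0^\infty\theta\E^{-\theta t}\E^{ut}\E^{t\Psi(v)}\D t=\theta/(\theta-u-\Psi(v))$, which is~\eqref{eq:WH-product}. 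The main obstacle I anticipate is not conceptual but bookkeeping: carefully justifying the telescoping identity $X_{T_\theta}=\sum_n\xi_n^{(\theta)}$ and the a.s. convergence of the relevant series (so that Campbell's formula applies and the split into $\{x>0\}$ and $\{x\le 0\}$ is legitimate), and then matching the two forms of the Fourier--Laplace transform—tracking the $\E^{-t}$ versus $\E^{-\theta t}$ normalisation and the $|x|$ versus $\pm x$ conventions in~\eqref{eq:WH-FL} so that the ratio $\varphi_\pm(-\theta,0)/\varphi_\pm(u-\theta,v)$ comes out exactly. One should also note the boundary convention (the atom at $x=0$ is assigned to the ``$-$'' side via $A_-=(-\infty,0]$, consistent with $\ov\tau$ being the first time of the maximum), which is harmless since it contributes the same to both representations.
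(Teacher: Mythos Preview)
Your proposal is correct and follows essentially the same route as the paper: you use Theorem~\ref{thm:SB-representation} and Proposition~\ref{prop:SB-PPP} to write $(\ov\tau_{T_\theta},\ov X_{T_\theta})$ and $(T_\theta-\ov\tau_{T_\theta},X_{T_\theta}-\ov X_{T_\theta})$ as integrals of the PPP $\Xi_\theta$ over the disjoint sets $(0,\infty)\times(0,\infty)$ and $(0,\infty)\times(-\infty,0]$, deduce independence, apply Campbell's formula for the transforms, and then obtain~\eqref{eq:WH-product} from independence together with $\e[\E^{uT_\theta+vX_{T_\theta}}]=\theta/(\theta-u-\Psi(v))$. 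The paper's proof is exactly this, stated a bit more tersely; your additional remarks on the telescoping identity, the $|x|$ versus $\pm x$ bookkeeping in $\varphi_\pm$, and the boundary convention at $x=0$ are all correct elaborations of steps the paper leaves implicit.
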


\begin{proof}
Let $\ell^{(\theta)}$, $\xi^{(\theta)}$ 
and $\Xi_\theta=\sum_{n=1}^\infty \delta_{(\ell_n^{(\theta)},\xi_n^{(\theta)})}$
be as in 
Proposition~\ref{prop:SB-PPP}.
By Theorem~\ref{thm:SB-representation}, we have 
\begin{align*}
	(\ov\tau_{T_\theta}(X),\ov X_{T_\theta}) \eqd \int_{B_+}(t,x)\Xi_\theta(\D t,\D x)\quad \& \quad
	(T_\theta-\ov\tau_{T_\theta}(X),X_{T_\theta}-\ov X_{T_\theta}) \eqd \int_{B_-}(t,x)\Xi_\theta(\D t,\D x),
\end{align*}
where $B_\pm:=(0,\infty)\times A_\pm$.
Moreover, since the joint law of
$(\ov\tau_{T_\theta}(X),\ov X_{T_\theta})$ and $(T_\theta-\ov\tau_{T_\theta}(X), X_{T_\theta}-\ov X_{T_\theta})$
equals that of the two integrals in the display above, the vectors are independent because $B_+\cap B_-=\emptyset$. 
By Proposition~\ref{prop:SB-PPP},
the mean measure of
$\Xi_\theta$
equals
$\mu_\theta(\D t,\D x)=t^{-1}\E^{-\theta t}\p(X_t\in\D x)\D t$.
Hence
\begin{align*}
\Psi^\pm_\theta(u,v)
&=\exp\bigg(\int_{(0,\infty)\times A_\pm}\big(\E^{ut+vx}-1\big)\frac{\E^{-\theta t}}{t}\p(X_t\in \D x)\D t\bigg)\quad\text{for all $u,v\in\C$ with $\Re u,\Re v\le 0$,}
\end{align*}
by Campbell's Theorem~\cite[p.~28]{MR1207584}.
This representation of $\Psi^\pm_\theta(u,v)$
and~\eqref{eq:WH-FL}
imply~\eqref{eq:WH+}--\eqref{eq:WH-}. 
The independence and the formula $\e \exp(u T_\theta+v X_{T_\theta}) = \theta/(\theta-u-\Psi(v))$
imply identity~\eqref{eq:WH-product}. 
\end{proof}



A circular argument would arise if one attempted to develop fluctuation theory for L\'evy processes with diffuse transition laws using~\cite[Thm~1]{MR2978134}, because of its reliance on Rogozin's result
for L\'evy process of infinite variation,
$\limsup_{t\da0} X_t/t=-\liminf_{t\da 0}X_t/t=\infty$ a.s.,
which relies on the Wiener-Hopf factorisation in an essential way. 
In contrast, Theorem~\ref{thm:levy-minorant},
applicable to all L\'evy processes, has an elementary proof
that does not use fluctuation theory.
In fact, Theorem~\ref{thm:levy-minorant} can be used as a short-cut to Rogozin's result as it implies the necessary fluctuation theory as described in this paper.

The question of the absolute continuity of the law of $\ov\chi_t(X)=(X_t, \ov X_t, \ov \tau_t(X))$ was the main topic in~\cite{MR3098676}, investigated using excursion theory. Again, Theorem~\ref{thm:SB-representation} provides an easy approach. In fact, in~\cite[Thm~2]{MR2978134} the authors use a version of Theorem~\ref{thm:SB-representation} for diffuse L\'evy processes (their~\cite[Thm~1]{MR2978134}) to show that the laws of $(\ov\tau_t(X),\ov X_t)$ and $(\ov X_t-X_t,\ov X_t)$ are equivalent to Lebesgue measure on $(0,t]\times(0,\infty)$ and $(0,\infty)^2$, respectively, for any L\'evy process with absolutely continuous marginals. 


\subsection{Vertex process of a L\'evy process}
\label{subsec:vertex}

The  final application of Theorem~\ref{thm:levy-minorant} describes the law of the vertex process of the convex minorant of $X$. Intuitively, the vertex process is naturally paremetrised by the slope of the  minorant and its range coinciding with the extremal points of the graph of the convex minorant. 
In the infinite time horizon case, Groeneboom~\cite{MR714964} described the law of the vertex process of Brownian motion as a time inhomogeneous additive process (i.e. a process with independent but non-stationary increments). This description was later extended by Nagasawa~\cite[Ch.~XI.1]{MR1739699} to L\'evy processes with infinite activity, again over an infinite time horizon.  We now show, as a simple corollary of Theorem~\ref{thm:levy-minorant}, that the vertex process has independent increments for all L\'evy processes and independent exponential (possibly infinite) time horizons.  
Before defining the vertex process, note that the convex minorant $\CM{\infty}{X}$ of $X$ on the infinite time interval $[0,\infty)$ exists and is finite if and only if $l:=\liminf_{t\to\infty}X_t/t$, which is a.s.~constant, lies in $(-\infty,\infty]$. Otherwise, we have $\CM{\infty}{X}\equiv-\infty$ on $(0,\infty)$ (see Section~\ref{sec:convex-minorants} below for a characterisation of $l=-\infty$ in terms of the L\'evy measure of $X$). 

For $\theta\in[0,\infty)$,
let the exponential random variable $T_\theta\sim\Exp(\theta)$ (with $T_0:=\infty$ a.s.) be independent of $X$. The right-derivative of $\CM{T_\theta}{X}$, given by 
$(\CM{T_\theta}{X})'(t):=\lim_{h\downarrow0}(\CM{T_\theta}{X}(t+h)-\CM{T_\theta}{X}(t))/h$, exists for any $t\in[0,T_\theta)$ and is a non-decreasing function of $t$ with a possibly infinite limit at $T_\theta$. 
Define 
the \textit{vertex process} 
$(\sigma,\eta)$
of $X$ to be 
the \cadlag~process 
 parametrised by slope $s\in\R$, where 
$\sigma=(\sigma_s)_{s\in\R}$ 
is the right-inverse of $(\CM{T_\theta}{X})'$ and $\eta=(\eta_s)_{s\in\R}$  the value of the minorant at that time, 
\begin{equation}
\label{eq:vertex_process_def}
\sigma_s:=T_\theta\wedge \inf\big\{t\in[0,T_\theta)
    \,:\,(\CM{T_\theta}{X})'(t)>s\big\}
\quad\text{and}\quad \eta_s:=\CM{T_\theta}{X}(\sigma_s)
=\min\{X_{\sigma_s-},X_{\sigma_s}\},
\end{equation}
respectively (here $a\wedge b:=\min\{a,b\}$ and $\inf\emptyset :=\infty$).
  Its construction is somewhat reminiscent of a ladder subordinator, indexed by the local time of $X$ at its running minimum, appearing in the classical approach to fluctuation theory of L\'evy processes, see e.g.~\cite[Ch.~VI]{MR1406564}.  It is clear that $\sigma$ has non-decreasing paths and, almost surely, we have 
\begin{equation}
    \label{eq:basic_properties_vertex}
\lim_{s\to-\infty}\sigma_s=0,\quad \lim_{s\to-\infty}\eta_s=X_0=0,
\quad
\eta_0=\un X_{T_\theta},
\quad
\lim_{s\to\infty}\sigma_s=T_\theta,
\quad
\lim_{s\to\infty}\eta_s=\CM{T_\theta}{X}(T_\theta),
\end{equation}
where  
$\CM{T_\theta}{X}(T_\theta)$
equals 
$X_{T_\theta}$ (if $\theta>0$) or
$\sgn(l)\cdot\infty$ (if $\theta=0$), where 
$\sgn(l)=1$ 
if 
$l=\liminf_{t\to\infty}X_t/t>0$
and $-1$ otherwise. 
The following result is an elementary consequence of the Poissonian structure of $\CM{T_\theta}{X}$, described in Corollary~\ref{cor:SB-PPP} below, which extends Proposition~\ref{prop:SB-PPP} above.

\begin{thm}\label{thm:additive}
Pick $\theta\in [0,\infty)$. Extend the definition of the mean measure $\mu_\theta$   in~\eqref{eq:mu_theta} (for $\theta>0$) by $\mu_0(\D t,\D x):=\1_{\{x/t<l\}}t^{-1}\p(X_t\in\D x)\D t$ on $(t,x)\in(0,\infty)\times\R$. Then the vertex process $(\sigma,\eta)$ has independent increments and its Fourier-Laplace transform is given by 
\begin{equation}
\label{eq:cf_inc_mu}
\e\big[\E^{u \sigma_s + v \eta_s}\big]
=\exp\bigg( \int_{(0,\infty)\times\R} 
    (\E^{ut + vx}-1)\1_{\{x/t \leq s\}}
        \mu_\theta(\D t,\D x) \bigg),
\end{equation}
for any $s\in\R$, $u,v\in\C$ with $\Re u\le 0$ and $\Re v=0$. In particular, the Laplace transform of $\sigma_s$ equals  
\begin{equation*}
\e[\E^{-u \sigma_s}]
=\exp\bigg(\int_0^\infty (e^{-u t}-1)e^{-\theta t}\p(X_t\le st)\frac{\D t}{t}\bigg),
\end{equation*}
for all $u\ge0$, and either $s\in\R$ (if $\theta>0$) or $s\in(-\infty,l)$ (if $\theta=0$).
\end{thm}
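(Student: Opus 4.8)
The plan is to deduce Theorem~\ref{thm:additive} from a Poissonian description of the convex minorant $\CM{T_\theta}{X}$ over the exponential horizon, analogous to how Theorem~\ref{thm:WH} was derived from Proposition~\ref{prop:SB-PPP}. First I would record the extension of Proposition~\ref{prop:SB-PPP} to $\theta=0$ (this is Corollary~\ref{cor:SB-PPP}, invoked in the statement): the faces of $\CM{T_\theta}{X}$, encoded as the marked point process $\Xi_\theta=\sum_n\delta_{(\ell_n^{(\theta)},\xi_n^{(\theta)})}$ of (horizontal length, vertical increment) pairs, is a PPP on $(0,\infty)\times\R$ with mean measure $\mu_\theta$ in~\eqref{eq:mu_theta} (for $\theta>0$) or its stated $\theta=0$ analogue (the indicator $\1_{\{x/t<l\}}$ reflecting that only faces with slope $<l$ appear when the minorant is over an infinite horizon; for $\theta=0$ one must check $\mu_0$ is still $\sigma$-finite and has no atom at infinite total length, which follows from $l=\liminf X_t/t$ and~\eqref{eq:stick_breaking_expectation}). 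The key structural fact is that the convex minorant is obtained by arranging the faces of $\Xi_\theta$ \emph{in increasing order of slope} $x/t$; hence the vertex at slope $s$ is the endpoint of the concatenation of all faces with slope $\le s$. Concretely, almost surely
\begin{equation*}
(\sigma_s,\eta_s)
=\sum_{n\ge1}(\ell_n^{(\theta)},\xi_n^{(\theta)})\cdot\1_{\{\xi_n^{(\theta)}/\ell_n^{(\theta)}\le s\}}
=\int_{(0,\infty)\times\R}(t,x)\,\1_{\{x/t\le s\}}\,\Xi_\theta(\D t,\D x).
\end{equation*}
This identity is the heart of the argument: it expresses $(\sigma_s,\eta_s)$ as the integral of a deterministic function against the PPP $\Xi_\theta$, restricted to the slope-half-space $\{x/t\le s\}$.

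Given this, independence of increments is immediate from the independence property of Poisson point processes: for slopes $s_1<s_2<\dots<s_k$, the increments $(\sigma_{s_{i+1}}-\sigma_{s_i},\eta_{s_{i+1}}-\eta_{s_i})$ are integrals of $\Xi_\theta$ over the pairwise disjoint slabs $\{s_i<x/t\le s_{i+1}\}$, hence independent; and $(\sigma_{s_1},\eta_{s_1})$ is the integral over $\{x/t\le s_1\}$, disjoint from all the slabs. The Fourier--Laplace transform~\eqref{eq:cf_inc_mu} then follows directly from Campbell's formula~\cite[p.~28]{MR1207584} applied to the PPP $\Xi_\theta$ with mean measure $\mu_\theta$ and the (bounded-modulus, since $\Re u\le0$, $\Re v=0$) integrand $(t,x)\mapsto (\E^{ut+vx}-1)\1_{\{x/t\le s\}}$; one should note the integral converges because $|\E^{ut+vx}-1|\le |u|t+|v||x|$ near the origin of the cone and the defining property of a convex minorant guarantees $\int t\wedge 1\,\mu_\theta<\infty$ on the relevant region. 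The final displayed Laplace transform of $\sigma_s$ is the specialisation $v=0$: integrating out $x$ in $\mu_\theta$ over $\{x\le st\}$ gives $e^{-\theta t}\p(X_t\le st)\D t/t$; the restriction to $s<l$ when $\theta=0$ is exactly the requirement that $\int_0^\infty\p(X_t\le st)\,\D t/t<\infty$, equivalently $\sigma_s<\infty$ a.s., which holds precisely for $s<l=\liminf_{t\to\infty}X_t/t$.

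The step I expect to be the main obstacle is justifying the ordering identity for $(\sigma_s,\eta_s)$ rigorously, i.e.\ that the right-derivative $(\CM{T_\theta}{X})'$ and its right-inverse $\sigma$ genuinely pick out the concatenation-by-increasing-slope of the countably many faces supplied by $\Xi_\theta$, including the edge behaviour: that the set of slopes $\{\xi_n^{(\theta)}/\ell_n^{(\theta)}\}$ is a.s.\ dense where it needs to be (so that $\sigma_s$, as a right-inverse, has no spurious jumps), that $\sum_n \ell_n^{(\theta)}=T_\theta$ a.s.\ (finite horizon case) or the faces exhaust $[0,\infty)$ appropriately (infinite horizon case with $l\in(-\infty,\infty]$), and that the two formulas for $\eta_s$ in~\eqref{eq:vertex_process_def}, $\CM{T_\theta}{X}(\sigma_s)=\min\{X_{\sigma_s-},X_{\sigma_s}\}$, are consistent with the partial-sum expression. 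Most of this is a consequence of Theorem~\ref{thm:levy-minorant} together with elementary properties of piecewise linear convex functions and the a.s.\ distinctness of the slopes $\xi_n^{(\theta)}/\ell_n^{(\theta)}$ (which holds because the marks $\xi_n^{(\theta)}$ have no common atoms structure forcing ties — or can be sidestepped by noting ties form a $\mu_\theta\otimes\mu_\theta$-null set off the diagonal); I would present it as a short lemma extracting the "faces sorted by slope" picture from Theorem~\ref{thm:levy-minorant}, after which everything above is a two-line application of PPP theory.
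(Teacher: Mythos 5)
Your proposal is correct and follows essentially the same route as the paper: Corollary~\ref{cor:SB-PPP} identifies the faces of $\CM{T_\theta}{X}$ with a PPP of mean measure $\mu_\theta$, the increments of $(\sigma,\eta)$ are the integrals of this PPP over the disjoint slope regions $\{(t,x):s_i<x/t\le s_{i+1}\}$ (hence independent), and~\eqref{eq:cf_inc_mu} is Campbell's formula. Your only anticipated obstacle, the a.s.\ distinctness of the slopes $\xi_n^{(\theta)}/\ell_n^{(\theta)}$, is in fact not needed (and it fails, e.g., for compound Poisson processes, where infinitely many sticks have height zero): formula~\eqref{eq:CM-formula} already orders tied slopes by index, and $\sigma_s$ depends only on the total length of the faces with slope at most $s$, so the identity in law of $\big((\sigma_{s_1},\eta_{s_1}),\ldots,(\sigma_{s_n},\eta_{s_n})\big)$ with the vector of slope-region integrals holds regardless of ties.
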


\begin{proof}
Let $\Xi_\theta\coloneqq\sum_{n=1}^\infty\delta_{(\ell_n^{(\theta)},\xi_n^{(\theta)})}$ be a PPP on $A:=(0,\infty)\times\R$ with mean measure~$\mu_\theta$. By Corollary~\ref{cor:SB-PPP} below and definition~\eqref{eq:vertex_process_def} of the vertex process $(\sigma,\eta)$, for any sequence of slopes $s_1<\ldots<s_n$ in $\R$ we have 
\[
\big((\sigma_{s_1},\eta_{s_1}),\ldots,(\sigma_{s_n},\eta_{s_n})\big)
\eqd \int_A
    \big(\1_{\{x/t\le s_1\}}(t,x),\ldots,\1_{\{x/t\le s_n\}}(t,x)\big)\Xi_\theta(\D t,\D x).
\]
Intuitively, since the convex minorant $\CM{T_\theta}{X}$ is piecewise linear, $\sigma_{s_1}$ is the sum of all the horizontal lengths of all the linear faces with slope strictly smaller than $s_1$, see Section~\ref{subsec:CM-PL} for a precise description. 
The increments $(\sigma_{s_1},\eta_{s_1})$, $(\sigma_{s_2}-\sigma_{s_1},\eta_{s_2}-\eta_{s_1})$, \ldots, $(\sigma_{s_n}-\sigma_{s_{n-1}},\eta_{s_n}-\eta_{s_{n-1}})$ are equal in law to the integrals with respect to the PPP $\Xi_\theta$ over disjoint ``pizza slices'' $\{(t,x)\in A: x/t\le s_1\}$, $\{(t,x)\in A: s_1< x/t\le s_2\}$, \dots, $\{(t,x)\in A: s_{n-1}<x/t\le s_n\}$ and are thus independent. The  Fourier-Laplace transform of the marginal $(\sigma_s,\eta_s)$ follows from Campbell's formula~\cite[p.~28]{MR1207584}.
\end{proof}

The vertex process can be constructed from the path of $X$ without a reference to the convex minorant $\CM{T_\theta}{X}$ as follows. Given $s\in\R$ (with $s<l$ if $\theta=0$), define the L\'evy process $X^{(s)}=(X^{(s)}_t)_{t\geq0}$ by $X^{(s)}_t\coloneqq X_t - st$. Then 
$\sigma_s=\un\tau_{T_\theta}(X^{(s)})$ and $\eta_s-s\sigma_s=\un{X}_{T_\theta}^{(s)}$. This description and Theorem~\ref{thm:additive} yield the following novel generalisation of the classical Wiener-Hopf factorisation.

\begin{cor}
\label{cor:multiple_drifts}
Pick $\theta> 0$ and let $T_\theta\sim\Exp(\theta)$ be independent of $X$.  Then, for any real numbers $s_1<\cdots<s_n$, the following $n+1$ vectors are independent: 
\begin{gather*}
(\un\tau_{T_\theta}(X^{(s_1)}),\un{X}_{T_\theta}^{(s_1)}
        +s_{1}\un\tau_{T_\theta}(X^{(s_1)})),
\quad
(T_\theta-\un\tau_{T_\theta}(X^{(s_{n})}),X_{T_\theta}-\un{X}_{T_\theta}^{(s_n)}
        -s_n\un\tau_{T_\theta}(X^{(s_n)})),
\quad\text{and}\\
(\un\tau_{T_\theta}(X^{(s_{i+1})})-\un\tau_{T_\theta}(X^{(s_{i})}),
    \un{X}_{T_\theta}^{(s_{i+1})}
        -\un{X}_{T_\theta}^{(s_{i})}
    +s_{i+1}\un\tau_{T_\theta}(X^{(s_{i+1})})
        -s_{i}\un\tau_{T_\theta}(X^{(s_{i})})),
\quad i=1,\ldots,n-1.
\end{gather*}
\end{cor}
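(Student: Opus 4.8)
\textbf{Proof proposal for Corollary~\ref{cor:multiple_drifts}.}
The plan is to deduce the corollary directly from Theorem~\ref{thm:additive} via the pathwise identities relating the vertex process to the infima of the drifted processes $X^{(s)}$. First I would record the two claimed identities: for $s\in\R$ (and $s<l$ if $\theta=0$, which is not a constraint here since $\theta>0$), one has $\sigma_s=\un\tau_{T_\theta}(X^{(s)})$ and $\eta_s-s\sigma_s=\un X_{T_\theta}^{(s)}$. These follow because the convex minorant of $X^{(s)}$ on $[0,T_\theta]$ is $t\mapsto \CM{T_\theta}{X}(t)-st$, so its right-derivative is $(\CM{T_\theta}{X})'(t)-s$; the first time this strictly exceeds $0$ is by definition $\sigma_s$, and this is exactly the (last, hence by the minorant structure also relevant) time at which $X^{(s)}$ attains its running infimum on $[0,T_\theta]$, i.e. $\un\tau_{T_\theta}(X^{(s)})$; evaluating the minorant at that time gives $\eta_s=\CM{T_\theta}{X}(\sigma_s)$, and subtracting $s\sigma_s$ gives the minorant of $X^{(s)}$ at $\sigma_s$, which is $\un X_{T_\theta}^{(s)}$. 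A small point to check carefully is the coincidence of the first time the derivative exceeds $s$ with the infimum time of $X^{(s)}$, using that $\CM{T_\theta}{X}$ touches $X$ (or $X_{-}$) at its vertices; this is the one place where some care about left limits versus values is needed, but it is the same bookkeeping already used in~\eqref{eq:vertex_process_def}.

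Next I would invert these relations to express the components of the $n+1$ vectors in the statement purely in terms of the vertex process. Concretely, $\un\tau_{T_\theta}(X^{(s_i)})=\sigma_{s_i}$ and $\un X_{T_\theta}^{(s_i)}+s_i\un\tau_{T_\theta}(X^{(s_i)})=(\eta_{s_i}-s_i\sigma_{s_i})+s_i\sigma_{s_i}=\eta_{s_i}$, so the first vector is $(\sigma_{s_1},\eta_{s_1})$. Likewise the middle vectors become $(\sigma_{s_{i+1}}-\sigma_{s_i},\,\eta_{s_{i+1}}-\eta_{s_i})$ for $i=1,\dots,n-1$, since the drift cancellations are engineered precisely so that the $s_i$-terms telescope away. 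For the last vector, I would use $X_{T_\theta}=X_{T_\theta}^{(s_n)}+s_nT_\theta$ and $\lim_{s\to\infty}\sigma_s=T_\theta$, $\lim_{s\to\infty}\eta_s=\CM{T_\theta}{X}(T_\theta)=X_{T_\theta}$ from~\eqref{eq:basic_properties_vertex}: then $T_\theta-\un\tau_{T_\theta}(X^{(s_n)})=T_\theta-\sigma_{s_n}$ and $X_{T_\theta}-\un X_{T_\theta}^{(s_n)}-s_n\un\tau_{T_\theta}(X^{(s_n)})=X_{T_\theta}-\eta_{s_n}$, i.e. the last vector is $(T_\theta-\sigma_{s_n},\,X_{T_\theta}-\eta_{s_n})$. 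Writing $T_\theta=\lim_{s\to\infty}\sigma_s$ and $X_{T_\theta}=\lim_{s\to\infty}\eta_s$, this is the ``increment of the vertex process from slope $s_n$ to slope $+\infty$''.

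With all $n+1$ vectors identified as the increments $(\sigma_{s_1},\eta_{s_1})$, $(\sigma_{s_{i+1}}-\sigma_{s_i},\eta_{s_{i+1}}-\eta_{s_i})$ for $i=1,\dots,n-1$, and $(T_\theta-\sigma_{s_n},X_{T_\theta}-\eta_{s_n})$ of the process $(\sigma,\eta)$ over the slopes $-\infty<s_1<\cdots<s_n<+\infty$, their joint independence is immediate from Theorem~\ref{thm:additive}, which asserts that $(\sigma,\eta)$ has independent increments; the only extra ingredient is that the final increment involves the endpoint at slope $+\infty$, which is handled by a monotone limiting argument (independence of increments over $s_1<\cdots<s_n<s_{n+1}$ passes to the limit $s_{n+1}\to\infty$ since $(\sigma_{s_{n+1}},\eta_{s_{n+1}})\to(T_\theta,X_{T_\theta})$ a.s. by~\eqref{eq:basic_properties_vertex}, and a.s.\ convergence preserves independence). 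I expect the main obstacle to be purely notational: matching the drift-corrected expressions in the statement term by term with $\eta_{s_i}$ and $\sigma_{s_i}$, and making sure the telescoping of the $s_i\un\tau_{T_\theta}(X^{(s_i)})$ contributions is stated cleanly. There is no analytic difficulty beyond what is already in Theorem~\ref{thm:additive}; the substantive content is entirely in the pathwise dictionary $\sigma_s=\un\tau_{T_\theta}(X^{(s)})$, $\eta_s=\un X_{T_\theta}^{(s)}+s\sigma_s$, together with the boundary behaviour~\eqref{eq:basic_properties_vertex}.
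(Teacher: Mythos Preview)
Your proposal is correct and follows exactly the approach the paper intends: the paper states the identities $\sigma_s=\un\tau_{T_\theta}(X^{(s)})$ and $\eta_s-s\sigma_s=\un X_{T_\theta}^{(s)}$ just before the corollary and then says ``This description and Theorem~\ref{thm:additive} yield'' the result, leaving the term-by-term identification you carry out implicit. Your limiting argument for the final increment at slope $+\infty$ is a valid way to extract it from the independent-increments statement of Theorem~\ref{thm:additive}; alternatively (and slightly more directly), one can read it off from the PPP representation in the proof of Theorem~\ref{thm:additive}, since $(T_\theta-\sigma_{s_n},X_{T_\theta}-\eta_{s_n})$ is the integral of $(t,x)$ over the pizza slice $\{x/t>s_n\}$, disjoint from the others.
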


\section{Stick-breaking representations for convex minorants}
\label{sec:convex-minorants}

\subsection{The convex minorant of a L\'evy process (the main theorem)}
\label{subsec:main_theorem}
Given any \cadlag~function $x:[0,T]\to\R$, its convex minorant, denoted by $\CM{T}{x}$, is the largest convex function that is pointwise smaller than $x$. The goal of this section is to prove our main result, Theorem~\ref{thm:levy-minorant}, which (when applied to $-X$) clearly yields Theorem~\ref{thm:SB-representation} above.

\begin{thm}\label{thm:levy-minorant}
Let $X$ be a L\'evy process and fix $T>0$. Let $(\ell_n)_{n\in\N}$ be a uniform stick-breaking process on $[0,T]$ independent of $X$. Then the convex minorant $C_T^X$ of $X$ has the same law (in the space of continuous functions on $[0,T]$) as the piecewise linear convex function on $[0,T]$ given by the formula
\begin{equation}
\label{eq:levy-minorant}
\begin{gathered}
t\mapsto\sum_{n=1}^\infty \xi_n\min\{\max\{t-a_n,0\}/\ell_n,1\},
\quad\text{where}\quad
\xi_n\coloneqq X_{L_{n-1}}-X_{L_n} \text{ and}\\
a_n\coloneqq \sum_{k=1}^\infty \ell_k \cdot\1_{\{\xi_k/\ell_k<\xi_n/\ell_n\}} 
+ \sum_{k=1}^{n-1} \ell_k \cdot\1_{\{\xi_k/\ell_k=\xi_n/\ell_n\}},
\quad n\in\N.
\end{gathered}
\end{equation}
In particular, the face of the piecewise linear function with horizontal length~$\ell_n$ has vertical height~$\xi_n$. \end{thm}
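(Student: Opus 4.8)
The plan is to prove the statement first for random walks (where the convex minorant is genuinely piecewise linear and the combinatorics of its faces is classical), and then pass to the limit along a suitable skeleton. For the discrete step, fix $m \in \N$ and consider the random walk $S^{(m)} = (S^{(m)}_k)_{k=0}^m$ with $S^{(m)}_k := X_{kT/m}$. By a cyclic-shift/uniform-random-permutation argument (the discrete analogue of Vervaat's transformation, as in~\cite[Thm~1]{MR2825583}), the sorted sequence of increments of the convex minorant of the linear interpolation of $S^{(m)}$, together with the horizontal lengths of its faces, has the same law as follows: take a uniform random composition of $\{1,\dots,m\}$ into blocks, which amounts to a stick-breaking-type partition of the index set; the block lengths $(\ell^{(m)}_n)$ give the horizontal face lengths, the partial increments of $X$ over the corresponding time intervals give the heights $\xi^{(m)}_n$, and the faces are then arranged in increasing order of slope $\xi^{(m)}_n/\ell^{(m)}_n$ (ties broken by the auxiliary index, exactly as in the definition of $a_n$ in~\eqref{eq:levy-minorant}). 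First I would make this identity precise: the key point is that the uniform composition of $[m]$, read off as a sequence of block sizes obtained by repeatedly sampling a uniform point in the remaining indices, converges (after normalisation by $m$) in the appropriate sense to the uniform stick-breaking sequence $(\ell_n)$ on $[0,T]$, and jointly the vector $(\ell^{(m)}_n, \xi^{(m)}_n)_{n \le N}$ converges in distribution to $(\ell_n, \xi_n)_{n \le N}$ for every fixed $N$, by right-continuity of the paths of $X$ and independence of the stick-breaking sequence from $X$.

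The second, and main, step is the convergence of the whole convex minorant. Here I would use the elementary geometric estimates promised in Section~\ref{subsec:CM-PL}: the map sending a càdlàg path to its convex minorant is Lipschitz (indeed $1$-Lipschitz) with respect to the uniform norm, so $\|\CM{T}{S^{(m)}} - \CM{T}{X}\|_\infty \le \|\widehat S^{(m)} - X\|_\infty$ where $\widehat S^{(m)}$ is the linear interpolation of the skeleton; the right-hand side does not in general go to zero uniformly, but it does along a sequence $m = m_j$ for which the mesh points become dense and one controls the oscillation of $X$ between consecutive skeleton points. More robustly, one shows that the finite truncations $t \mapsto \sum_{n=1}^{N} \xi^{(m)}_n \min\{\max\{t - a^{(m)}_n,0\}/\ell^{(m)}_n, 1\}$ converge, as $m \to \infty$ and then $N \to \infty$, uniformly to the corresponding object built from $(\ell_n,\xi_n)$, because the tail $\sum_{n > N} \ell_n \to 0$ a.s.\ controls both the total horizontal length and (via the modulus of continuity of $X$ composed with the stick-breaking clock) the total vertical variation of the discarded faces. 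Combining the distributional convergence of the finite-dimensional data from step one with these uniform tail bounds gives that $\CM{T}{X}$ has the law of the infinite piecewise-linear series in~\eqref{eq:levy-minorant}; the series converges uniformly a.s.\ and defines a genuine convex function, since the faces are assembled in nondecreasing order of slope and their horizontal lengths sum to $T$.

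Finally, the last sentence of the statement — that the face of horizontal length $\ell_n$ has vertical height $\xi_n$ — is immediate from the construction in~\eqref{eq:levy-minorant}: the $n$-th summand contributes a linear piece on the interval $[a_n, a_n + \ell_n]$ with total rise $\xi_n$, and the definition of $a_n$ places this piece so that all faces of strictly smaller slope precede it and ties are ordered by index, so the pieces do not overlap and their concatenation is exactly the graph of the convex function. I expect the main obstacle to be step two: controlling the convergence of the convex minorants without the face-by-face Skorokhod arguments of~\cite{MR2978134}, i.e.\ making the ``whole minorant'' uniform estimate work for all Lévy processes including those of low activity; the resolution is precisely that the convex-minorant map is $1$-Lipschitz in the sup norm and that the stick-breaking tail $\sum_{n>N}\ell_n$ gives a clean, path-independent handle on the neglected faces, so that one never needs to track individual excursions.
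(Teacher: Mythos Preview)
Your overall architecture --- prove the result for the random-walk skeleton via~\cite[Thm~1]{MR2825583}, then pass to the limit on both sides --- is exactly the paper's strategy. But the two limit steps you sketch both rely on regularity that a general L\'evy process does not have, and the paper's actual arguments at those points are different and sharper.

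First, on the convergence $\CM{T}{\widehat S^{(m)}}\to\CM{T}{X}$: the Lipschitz bound $\|\CM{T}{\widehat S^{(m)}}-\CM{T}{X}\|_\infty\le\|\widehat S^{(m)}-X\|_\infty$ is correct, but the right-hand side does \emph{not} tend to zero along any subsequence when $X$ has jumps (near a jump of size $\Delta$ the linear interpolation is off by $\approx|\Delta|/2$, regardless of the mesh). The paper avoids this entirely: taking $\wt C_k$ to be the largest convex function $\le X$ on the nested dyadic grids $D_k$, one has $\wt C_k\downarrow\wt C_\infty$ monotonically, $C_T^X\le\wt C_k$ for every $k$, and $\wt C_\infty$ is convex and $\le X$ on a dense set, hence $\le X$ everywhere by \cadlag{}ness, forcing $\wt C_\infty=C_T^X$. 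No uniform approximation of $X$ is needed.

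Second, and this is the heart of the matter, your tail control ``$\sum_{n>N}\ell_n\to 0$ controls the total vertical variation via the modulus of continuity of $X$'' does not work: a \cadlag\ L\'evy process has no modulus of continuity, and small horizontal lengths $\ell_n$ do not force small heights $|\xi_n|$. What the paper does instead is to reuse the random-walk identity itself. By Theorem~\ref{thm:SB-representation-RW} the tail $\sum_{n\ge M}|\xi_{k,n}|$ is the total variation of the convex minorant of the skeleton restricted to $[0,L_{k,M}]$, which \emph{equals in law} $X_{L_{k,M}}-2\min_{t\in D_k\cap[0,L_{k,M}]}X_t\le \ov X_{L_M}-2\un X_{L_M}$; this last quantity tends to $0$ as $M\to\infty$ simply because $L_M\to0$ and $X$ is right-continuous at $0$. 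That distributional trick --- bounding the tail of the stick-breaking heights by an extremal functional of $X$ on a shrinking interval --- is what makes the limit go through for \emph{all} L\'evy processes, including those of low activity, and it is not recoverable from a modulus-of-continuity or a ``path-independent'' bound based on $\sum_{n>N}\ell_n$ alone.
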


We now present a simple but useful consequence of  Theorem~\ref{thm:levy-minorant}, first established in~\cite[Cor.~2 \& 3]{MR2978134} for diffuse L\'evy processes. Recall the a.s. constant  $l=\liminf_{t\to\infty}X_t/t$ lies in $[-\infty,\infty]$. Whenever the expectation $\e X_1$ is well defined, i.e., if $\min\{\e\max\{X_1,0\},\e\max\{-X_1,0\}\}<\infty$, the strong law of large numbers~\cite[Thms~36.4 \& 36.5]{MR3185174} implies that $l=\e X_1=\lim_{t\to\infty}X_t/t$ a.s. Otherwise, we have $\e X_1^+=\e X_1^-=\infty$ and~\cite[Thm~15]{MR2320889} implies that $l\in \{-\infty,\infty\}$ and 
\begin{equation}
\label{eq:l_is_minus_infinity}
l=-\infty \qquad \text{if and only if} \qquad 
\int_{(-\infty,-1)}
    \frac{|x|}{1+\int_{1}^{|x|}\nu([y,\infty))\D y}\nu(\D x)=\infty.
\end{equation}
The proof of~\cite[Thm~15]{MR2320889} is an easy corollary of the analogous result for random walks (see also~\cite{MR336806}), proven using renewal theory. Indeed, the small-jump and Brownian components of $X$ converge by the strong law of large numbers and the big-jump component is a  random walk, time-changed by a Poisson process.

\begin{cor}
\label{cor:SB-PPP}
Let $\theta\in[0,\infty)$ and $T_\theta\sim\Exp(\theta)$ (with $T_0=\infty$) be independent of $X$. When $\theta=0$ we assume that $l=\liminf_{t\to\infty} X_t/t>-\infty$. Define a $\sigma$-finite measure on $(0,\infty)\times\R$:
\[
\mu_\theta(\D t,\D x):
=\begin{cases}
t^{-1}\E^{-\theta t}\p(X_t\in\D x)\D t,
& \theta>0,\\
\1_{\{x/t<l\}}t^{-1}\p(X_t\in\D x)\D t,
& \theta=0.
\end{cases}
\]
Let $\Xi_\theta=\sum_{n\in\N}\delta_{(\ell_n^{(\theta)},\xi_n^{(\theta)})}$ be a Poisson point process with mean measure $\mu_\theta$. Then $\CM{T_\theta}{X}$ has the same law as the piecewise linear function given by~\eqref{eq:levy-minorant}. In particular, the face of the piecewise linear function with horizontal length $\ell_n^{(\theta)}$ has vertical height $\xi_n^{(\theta)}$ and,  when $\theta=0$, the corresponding slope $\xi_n^{(\theta)}/\ell_n^{(\theta)}$ lies on the interval $(-\infty,l)$. 
\end{cor}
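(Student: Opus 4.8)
The plan is to obtain the case $\theta>0$ first, as a direct combination of Theorem~\ref{thm:levy-minorant} with the Poissonian structure of the stick-breaking sequence on an independent exponential horizon, and then to obtain the case $\theta=0$ by a monotone limit $\theta\downarrow0$. For the first step, recall from the discussion preceding Proposition~\ref{prop:SB-PPP} (and Appendix~\ref{app:PPP_expon-time}) that if $T_\theta\sim\Exp(\theta)$ is independent of $X$ and $\ell^{(\theta)}$ is a stick-breaking process on $[0,T_\theta]$, then $\sum_n\delta_{\ell_n^{(\theta)}}$ is a PPP on $(0,\infty)$ with mean measure $t^{-1}\E^{-\theta t}\D t$; marking each atom $\ell_n^{(\theta)}$ with an independent draw from $F(\ell_n^{(\theta)},\cdot)=\p(X_{\ell_n^{(\theta)}}\in\cdot)$, the Marking Theorem gives that $\Xi_\theta=\sum_n\delta_{(\ell_n^{(\theta)},\xi_n^{(\theta)})}$ is a PPP with mean measure $\mu_\theta(\D t,\D x)=t^{-1}\E^{-\theta t}\p(X_t\in\D x)\D t$, exactly as in Proposition~\ref{prop:SB-PPP}. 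The key observation is that the law of the piecewise linear function in~\eqref{eq:levy-minorant} depends on $(\ell_n,\xi_n)_{n}$ only through the \emph{unordered collection} of pairs $\{(\ell_n,\xi_n)\}$ together with the relative order of the ``ties'' $\{k: \xi_k/\ell_k=\xi_n/\ell_n\}$; but by Theorem~\ref{thm:levy-minorant} applied with the random horizon $T_\theta$ (conditioning on $T_\theta=T$ and integrating), $\CM{T_\theta}{X}$ has the law of that piecewise linear function built from the stick-breaking pairs, so it suffices to check that the function constructed from any enumeration of the atoms of $\Xi_\theta$ (with ties broken by an independent uniform ordering, which occurs with probability one anyway off a $\mu_\theta$-null diagonal when $X$ has no atoms, and is handled by a symmetry argument otherwise) is a.s.\ well defined and agrees in law with the stick-breaking construction. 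Since the stick-breaking enumeration and any size-biased-free enumeration of a PPP produce the same point configuration, this identification is immediate once one verifies that~\eqref{eq:levy-minorant} is a measurable functional of the point configuration that is invariant under re-enumeration; this invariance is precisely the content of the phrase ``considering the convex minorant as a whole'' and follows because $a_n$ in~\eqref{eq:levy-minorant} is defined symmetrically in terms of slopes.

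For the second step, let $\theta\downarrow0$. Almost surely $\CM{T_\theta}{X}\to\CM{\infty}{X}$ pointwise on $[0,\infty)$ (monotonically, since enlarging the horizon only lowers the minorant), and the latter is finite precisely when $l>-\infty$, which we have assumed. On the PPP side, $\mu_\theta\uparrow\mu_0^{\mathrm{full}}(\D t,\D x):=t^{-1}\p(X_t\in\D x)\D t$ as measures; the point is that almost every atom $(\ell_n^{(\theta)},\xi_n^{(\theta)})$ with slope $\xi_n^{(\theta)}/\ell_n^{(\theta)}\ge l$ contributes a face of infinite total horizontal extent in the limit and must be discarded, which is exactly why the limiting mean measure carries the indicator $\1_{\{x/t<l\}}$. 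Concretely, one shows that the restriction of $\Xi_\theta$ to $\{x/t<l-\epsilon\}$ converges to the restriction of a PPP with mean measure $\mu_0$ (these restrictions have finite total mass on compact slope-intervals bounded away from $l$ by the integrability encoded in~\eqref{eq:l_is_minus_infinity} together with the strong law $l=\liminf X_t/t$), while the atoms with slope in $[l-\epsilon,\infty)$ accumulate at horizontal length $\infty$ and assemble into the single ``terminal'' affine piece of $\CM{\infty}{X}$ with slope $l$. Passing $\epsilon\downarrow0$ and matching the two descriptions of the piecewise linear function gives that $\CM{\infty}{X}$ has the law of the function~\eqref{eq:levy-minorant} built from a PPP with mean measure $\mu_0$, and in particular every face has slope in $(-\infty,l)$. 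The final assertion about slopes is then automatic from the support of $\mu_0$.

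The main obstacle is the $\theta=0$ limit, and within it the careful treatment of the faces whose slopes approach $l$: one must argue that these do \emph{not} survive as individual faces of $\CM{\infty}{X}$ but instead merge into the terminal slope-$l$ ray, and that this is compatible with the vanishing of the corresponding part of $\mu_\theta$. This requires quantifying, via~\eqref{eq:l_is_minus_infinity} and the strong law of large numbers, that $\mu_\theta(\{x/t\in[l-\epsilon,\infty)\})$ stays bounded (so that only finitely many such atoms appear on any finite horizon) yet their horizontal lengths diverge as $\theta\downarrow0$; a clean way to package this is to invoke Corollary~\ref{cor:SB-PPP} for $\theta>0$, write $\CM{T_\theta}{X}$ explicitly via~\eqref{eq:levy-minorant}, and take the almost-sure limit face-group by face-group, using the monotonicity of the minorant to control the tail uniformly. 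Everything else—the measurability and re-enumeration invariance of~\eqref{eq:levy-minorant}, and the $\theta>0$ identification—is routine given Theorem~\ref{thm:levy-minorant} and the PPP description of stick-breaking on an exponential horizon.
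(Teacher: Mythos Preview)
Your $\theta>0$ case is fine and matches the paper (Proposition~\ref{prop:SB-PPP} plus Theorem~\ref{thm:levy-minorant}); the re-enumeration discussion is unnecessary but harmless.

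The $\theta=0$ argument has a genuine gap and a wrong picture. You assert that the atoms with slope in $[l-\epsilon,\infty)$ ``assemble into the single terminal affine piece of $\CM{\infty}{X}$ with slope $l$''. There is no such piece: the paper shows that the right-derivative of $\CM{\infty}{X}$ is \emph{strictly} smaller than $l$ for every $t\in[0,\infty)$ and only approaches $l$ in the limit $t\to\infty$. When $l<\infty$ this is non-trivial and uses that $\e|X_1|<\infty$, $\e X_1=l$, and the martingale $(X_t-lt)_{t\ge0}$ is recurrent, forcing $\liminf_{t\to\infty}(\CM{\infty}{X}(t)-lt)=-\infty$. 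Without this fact you cannot conclude that every face of $\CM{\infty}{X}$ has slope in $(-\infty,l)$, which is exactly the support constraint on $\mu_0$.

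The second missing idea is \emph{exact agreement} rather than mere monotone convergence. The paper introduces, for each $a<l$, the time $S_a$ at which the derivative of $\CM{\infty}{X}$ first exceeds $a$, and observes that for every $T>S_a$ the functions $\CM{T}{X}$ and $\CM{\infty}{X}$ \emph{coincide} on $[0,S_a]$. Hence the faces of $\CM{T_\lambda}{X}$ with slope $<a$ are eventually (for small $\lambda$) exactly the faces of $\CM{\infty}{X}$ with slope $<a$, so the restriction of the associated point process to $Z_a=\{(t,x):x/t<a\}$ stabilises and inherits the PPP law with mean measure $\mu_0$ restricted to $Z_a$; letting $a\uparrow l$ finishes. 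Your ``face-group by face-group'' limit, by contrast, relies on controlling $\mu_\theta(\{x/t\ge l-\epsilon\})$ via~\eqref{eq:l_is_minus_infinity}, but that display characterises the case $l=-\infty$ which is excluded here, and in any event $\int_0^\infty t^{-1}\E^{-\theta t}\p(X_t\ge(l-\epsilon)t)\,\D t$ need not stay bounded as $\theta\downarrow0$.
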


\begin{proof}
The result for $\theta>0$ follows from Proposition~\ref{prop:SB-PPP} and Theorem~\ref{thm:levy-minorant}. We now describe the case $\theta=0$ following the proof of~\cite[Cor.~3]{MR2978134}. First note that the definition of $l$ implies that the right-derivative of $\CM{\infty}{X}$ is upper bounded by $l$. On the other hand, the derivative has no smaller upper bound. Indeed, suppose the derivative is bounded by some $a<l$. Then there exists some time $T>0$ after which $X_t/t\ge (a+l)/2$ for all $t\ge T$. Thus  $t\mapsto \CM{\infty}{X}(\min\{t,T\})+\max\{t-T,0\}(a+l)/2$ is convex, dominates $\CM{\infty}{X}$ and is dominated by $X$, contradicting the maximality of $\CM{\infty}{X}$. Moreover, the right-derivative of $\CM{\infty}{X}$ is never equal to $l$. If $l=\infty$ this is clear. If $l<\infty$, by the discussion preceding the corollary we have $\e|X_1|<\infty$ and thus $\e X_1 =l$. Since the martingale $ (X_t-lt)_{t\geq0}$ is recurrent~\cite[Thm~36.7]{MR3185174}, then $\liminf_{t\to\infty}\CM{\infty}{X}(t)-lt\le \liminf_{t\to\infty}X_t-lt=-\infty$. Thus, the right-derivative of $\CM{\infty}{X}$ is strictly smaller than $l$ on $[0,\infty)$ and its limit as $t\to\infty$ equals $l$. 

For any $a<l$ let $S_a$ be the first time the derivative of $\CM{\infty}{X}$ is greater than $a$. Note that for any $T>S_a$, the convex minorant $\CM{T}{X}$ agrees with $\CM{\infty}{X}$ on $[0,S_a]$. Since $S_a\to\infty$ as $a\to l$, this establishes the piecewise linearity of $\CM{\infty}{X}$. Let $E$ be an independent exponential time with unit mean and let $T_\lambda:=E/\lambda$ for $\lambda>0$. Note that $\CM{T_\lambda}{X}$ converges to $\CM{\infty}{X}$ as $\lambda\to 0$ since, for every $a<l$, both convex minorants agree on $[0,S_a]$ for all sufficiently small $\lambda$. Thus, for every $a<l$, the associated PPP encoding the lengths and heights of the faces of $\CM{T_\lambda}{X}$ (as in Proposition~\ref{prop:SB-PPP} and Theorem~\ref{thm:levy-minorant}) converges to the corresponding point process for the faces of $\CM{\infty}{X}$ on the set $Z_a:=\{(t,x)\in[0,\infty)\times\R\,:\,x/t<a\}$ as $\lambda\to0$, implying that the PPP representation of the faces of $\CM{\infty}{X}$ mean measure $\mu_0$ holds on $Z_a$. Since $a<l$ is arbitrary, $S_a\to\infty$ as $a\to l$ and $Z_l=\bigcup_{a<l}Z_a$, the PPP representation extends to $[0,\infty)$, completing the proof. 
\end{proof}

\noindent\underline{Overview of the proof of Theorem~\ref{thm:levy-minorant}.} 
Theorem~\ref{thm:levy-minorant} connects two worlds: (\textbf{W1}) $X$ and its convex minorant $C_T^X$ on the interval $[0,T]$ and (\textbf{W2}) a random piecewise linear convex function. 
We first establish a convergence result within (\textbf{W2}) for a sequence of piecewise linear convex functions, see Section~\ref{subsec:CM-PL}. This crucial step in the proof requires only elementary geometric manipulations of piecewise linear convex functions. In Section~\ref{subsec:CM-RW}, using~\cite[Thm~1]{MR2825583}, we establish a bridge between (\textbf{W1}) and (\textbf{W2}) for random walks. 
We recall the 3214 path transformation~\cite{MR2825583} for random walks and provide a short proof, based on the convergence results in Section~\ref{subsec:CM-PL}, of the connection between (\textbf{W1}) and (\textbf{W2}) for random walks with general increments, see Theorem~\ref{thm:SB-representation-RW} below. In Section~\ref{subsec:proof-main-thm}, we establish Theorem~\ref{thm:levy-minorant}
by taking the limit of the convex minorant of the random-walk skeleton of $X$ 
in (\textbf{W1}) and, using the convergence results of Section~\ref{subsec:CM-PL}, the corresponding limit 
in (\textbf{W2}).

We stress that the proof of Theorem~\ref{thm:levy-minorant}
given in this section is self-contained, requiring only rudimentary real analysis and the fact that $X$ has stationary, independent increments and right-continuous paths with left limits. In particular, we make no use of the L\'evy measure, the L\'evy-Khintchine formula for $X$ or weak convergence in the $J_1$-topology
on the Skorokhod space. 


\subsection{Convex minorants and piecewise linear functions}
\label{subsec:CM-PL}

Let $\br{n}\coloneqq\{1,\ldots,n\}$ for $n\in\N$ and $\br{\infty}\coloneqq\N$. We say that a function $f:[0,T]\to\R$ is piecewise linear if there exists a set consisting of $N\in\ov\N\coloneqq\N\cup\{\infty\}$ pairwise disjoint non-degenerate subintervals $\{(a_n,b_n):n\in \br{N}\}$ of $[0,T]$ such that $\sum_{n=1}^N( b_n-a_n)=T$ and $f$ is linear on each $(a_n,b_n)$. A face of $f$, corresponding to a subinterval $(a_n,b_n)$, has length $l_n=b_n-a_n>0$, height $h_n=f(b_n)-f(a_n)\in\R$ and slope $h_n/l_n$. Note that, if $f$ is continuous and of finite variation  $\sum_{n=1}^{N} |f(b_n)-f(a_n)|<\infty$, the following representation holds: 
\begin{equation}
\label{eq:pw_linear_rep}
f(t)=f(0)+\sum_{n=1}^N h_n\min\{\max\{t-a_n,0\}/l_n,1\}, \qquad t\in[0,T].
\end{equation}

The number $N$ in representation~\eqref{eq:pw_linear_rep} is not unique in general as any face may be subdivided into two faces with the same slope. Moreover, for a fixed $f$ and $N$, the set of intervals $\{(a_n,b_n):n\in\br{N}\}$ need not be unique. Furthermore we stress that the sequence of faces in~\eqref{eq:pw_linear_rep} does not necessarily respect the chronological order. Put differently, the sequence $(a_n)_{n\in\br{N}}$ need not be increasing. 
We use the convention $\sum_{k=n}^m=0$ when $n>m$ 
and denote $x^+:=\max\{x,0\}$ for all $x\in\R$, throughout.

\begin{lem}
\label{lem:seq-to-cm}
Fix $T>0$, $N\in\ov\N$ and let $l=(l_n)_{n=1}^N$ be a sequence of positive lengths with $\sum_{n=1}^N l_n=T$.\\ 
{\nf{(a)}} For any sequence of heights $h=(h_n)_{n=1}^N$ with $\sum_{n=1}^N |h_n|<\infty$, the function 
\begin{equation}\begin{split}
\label{eq:CM-formula}
F_{l,h}(t)&\coloneqq \sum_{n=1}^N h_n\min\{(t-a_n)^+/l_n,1\},
\qquad
t\in[0,T],\quad\text{where}\\
a_n&\coloneqq \sum_{k=1}^N l_k \cdot\1_{\{h_k/l_k<h_n/l_n\}} 
+ \sum_{k=1}^{n-1} l_k \cdot\1_{\{h_k/l_k=h_n/l_n\}},
\qquad n\in\br{N},
\end{split}\end{equation}
is piecewise linear and convex with $F_{l,h}(0)=0$. Differently put, $F_{l,h}$ is linear on each interval $(a_n,a_n+l_n)$ with length $l_n$ and height $h_n$. Moreover, any piecewise linear convex function started at zero whose faces have lengths~$l$ and heights~$h$ must equal $F_{l,h}$.\\
{\nf{(b)}} Suppose $N<\infty$. Given two sequences of heights $h=(h_n)_{n=1}^N$ and $h'=(h'_n)_{n=1}^N$, denote the corresponding functions in~\eqref{eq:CM-formula} by $F_{l,h}$ and $F_{l,h'}$ with sequences $(a_n)_{n=1}^N$ and $(a_n')_{n=1}^N$ of the left endpoints of the intervals on which these functions are linear, respectively. Define the function 
\[
G_{l,h,h'}(t)\coloneqq
    \sum_{n=1}^N h_n\min\{(t-a'_n)^+/l_n,1\},
    \qquad t\in[0,T].
\]
Then, we have 
\begin{equation}
\label{eq:sandwich}
\max\{\|F_{l,h}-F_{l,h'}\|_\infty,
    \|F_{l,h'}-G_{l,h,h'}\|_\infty\}
\leq\max\bigg\{\sum_{n=1}^N (h_n-h_n')^+,\sum_{n=1}^N (h'_n-h_n)^+\bigg\},
\end{equation}
where $\|f\|_\infty\coloneqq\sup_{t\in[0,T]}|f(t)|$ denotes the supremum norm.
\end{lem}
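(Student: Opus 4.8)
Part (a). The plan is to verify directly from formula~\eqref{eq:CM-formula} that $F_{l,h}$ has the claimed structure. First I would check that the intervals $\{(a_n,a_n+l_n):n\in\br N\}$ are pairwise disjoint and exhaust $[0,T]$: by construction $a_n$ is the sum of the lengths $l_k$ of all faces whose slope $h_k/l_k$ is strictly below the slope of face $n$, plus the lengths of all earlier-indexed faces sharing that slope; this is precisely the left endpoint we get if we sort the faces by slope (breaking ties by index), so the $n$-th face occupies $(a_n,a_n+l_n)$ and these tile $[0,T]$ because $\sum l_k=T$. On $(a_n,a_n+l_n)$ the only summand in~\eqref{eq:CM-formula} that is non-constant and non-saturated is the $n$-th (every face with smaller slope contributes its full height $h_k$ since $t>a_k+l_k$ there, every face with larger slope contributes $0$ since $t<a_k$, and ties are handled by the index ordering), so $F_{l,h}$ is linear there with slope $h_n/l_n$ and increment $h_n$; $F_{l,h}(0)=0$ is immediate. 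Convexity then follows because the slopes of consecutive faces are non-decreasing by the sorting, and the summability $\sum|h_n|<\infty$ guarantees the series converges uniformly (each term is bounded by $|h_n|$) so $F_{l,h}$ is a well-defined continuous function. For the uniqueness clause: if $g$ is any piecewise linear convex function with $g(0)=0$ whose faces have lengths $l$ and heights $h$, then convexity forces its faces to appear in non-decreasing slope order, which (up to the tie-breaking convention already built into~\eqref{eq:CM-formula}) pins down the left endpoints to be exactly the $a_n$, and then $g$ and $F_{l,h}$ agree face by face, hence everywhere via~\eqref{eq:pw_linear_rep}.

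Part (b). Here I would exploit the elementary observation that $F_{l,h'}$ and $G_{l,h,h'}$ share the \emph{same} breakpoint sequence $(a'_n)$ — the one determined by the heights $h'$ — and differ only in the height placed on each interval ($h'_n$ versus $h_n$). So on the $n$-th interval $(a'_n,a'_n+l_n)$ the function $G_{l,h,h'}-F_{l,h'}$ changes by $h_n-h'_n$, starting from whatever value it had accumulated from the earlier faces in the $a'$-ordering. Thus $G_{l,h,h'}(t)-F_{l,h'}(t)$ is itself a continuous piecewise linear function vanishing at $0$ and at $T$ (since $\sum_n(h_n-h'_n)$ need not be zero — wait, actually $G_{l,h,h'}(T)-F_{l,h'}(T)=\sum_n(h_n-h'_n)$, so it need not vanish at $T$). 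The point is that its value at any $t$ is a partial sum $\sum_{k:\,a'_k+l_k\le t}(h_k-h'_k)$ plus one fractional term $\frac{(t-a'_n)}{l_n}(h_n-h'_n)$, i.e. a convex combination of two partial sums of the series $\sum_k(h_k-h'_k)$ taken in the $a'$-order. Any such partial sum lies between $-\sum_k(h_k-h'_k)^-=-\sum_k(h'_k-h_k)^+$ and $\sum_k(h_k-h'_k)^+$, so $\|G_{l,h,h'}-F_{l,h'}\|_\infty\le\max\{\sum(h_n-h_n')^+,\sum(h'_n-h_n)^+\}$.

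For the bound on $\|F_{l,h}-F_{l,h'}\|_\infty$ the breakpoints no longer agree, so I would instead compare via right-derivatives. Both $F_{l,h}$ and $F_{l,h'}$ are convex with $F_{l,h}(0)=F_{l,h'}(0)=0$, and for a convex function started at $0$ one has $f(t)=\int_0^t f'(s)\,\D s$ with $f'$ the non-decreasing right-derivative, which here is the quantile (non-decreasing rearrangement) of the step function taking value $h_n/l_n$ on a set of Lebesgue measure $l_n$. The key is that $F_{l,h}(t)-F_{l,h'}(t)=\int_0^t\big(F_{l,h}'(s)-F_{l,h'}'(s)\big)\D s$, and using the standard fact that the difference of the non-decreasing rearrangements of two functions, integrated over $[0,t]$, is controlled by partial sums of $h_n-h'_n$: concretely, $\int_0^t F_{l,h}'(s)\D s=\sum_n h_n\min\{(t-a_n)^+/l_n,1\}$ is, as a function of the "how much of face $n$ is used" profile, maximised/minimised over all admissible profiles (those using total mass $t$, fraction in $[0,1]$ of each face) at the sorted profile, and swapping $h$ for $h'$ perturbs each $h_n$ to $h'_n$. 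I would bound $\|F_{l,h}-F_{l,h'}\|_\infty$ by inserting $G_{l,h,h'}$: $\|F_{l,h}-F_{l,h'}\|_\infty\le\|F_{l,h}-G_{l,h,h'}\|_\infty+\|G_{l,h,h'}-F_{l,h'}\|_\infty$, and note $F_{l,h}$ and $G_{l,h,h'}$ both place height $h_n$ on their $n$-th interval but with left endpoints $a_n$ (slope-sorted for $h$) versus $a'_n$ (slope-sorted for $h'$) — one then argues that rearranging the blocks of equal total height-mass from the $a'$-order to the $a$-order (a sorting of the same multiset of $(l_n,h_n)$ pairs by a perturbed slope key) changes the function by at most the same quantity. \emph{This monotone-rearrangement comparison — showing that reordering the faces by the "wrong" slope key moves the running sum by at most $\max\{\sum(h_n-h'_n)^+,\sum(h'_n-h_n)^+\}$ — is the main obstacle;} I expect it to come down to a clean exchange/telescoping argument: tracking one face at a time as we pass from the $h'$-sorted order to the $h$-sorted order, each adjacent transposition of faces $i,j$ with $h_i/l_i<h_j/l_j$ but $h'_i/l'_i\ge h'_j/l'_j$ (a "misordered pair") shifts the graph only on $(a,a+l_i+l_j)$ and by an amount bounded by $|h_i|+|h_j|$ restricted appropriately, and summing these contributions telescopes to a partial sum of the $(h_n-h'_n)$, giving~\eqref{eq:sandwich}.
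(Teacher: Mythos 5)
Part (a) of your proposal is fine and matches the paper's argument (tiling of $[0,T]$ by the slope-sorted intervals, convexity from the non-decreasing right-derivative, uniqueness from the derivative plus the initial condition), and your bound on $\|F_{l,h'}-G_{l,h,h'}\|_\infty$ via partial sums is the same termwise comparison the paper uses. The genuine gap is in the other, harder half of \eqref{eq:sandwich}. Your route is to write $\|F_{l,h}-F_{l,h'}\|_\infty\le\|F_{l,h}-G_{l,h,h'}\|_\infty+\|G_{l,h,h'}-F_{l,h'}\|_\infty$ and then control the first term by a rearrangement/transposition argument. Two problems: (i) even if $\|F_{l,h}-G_{l,h,h'}\|_\infty$ were bounded by the same maximum, the triangle inequality only yields $2\max\{\sum_n(h_n-h'_n)^+,\sum_n(h'_n-h_n)^+\}$, not the constant-$1$ bound claimed in \eqref{eq:sandwich}; and (ii) the rearrangement step itself --- which you explicitly flag as the main obstacle --- is not proved, and the sketch does not close: an adjacent transposition of misordered faces $i,j$ perturbs the graph by an amount involving the full heights (of order $|h_i|,|h_j|$, e.g. swapping a flat face with a steep one of comparable length moves the graph by roughly half the steep height), the number of misordered pairs can be of order $N^2$, and such contributions do not telescope to partial sums of $h_n-h'_n$; so there is no evident way to reach $\max\{\sum_n(h_n-h'_n)^+,\sum_n(h'_n-h_n)^+\}$ along this path.

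The paper's proof avoids comparing $F_{l,h}$ with $G_{l,h,h'}$ altogether for this half. It interpolates one coordinate at a time: with $h^{(k)}_n\coloneqq h_n\1_{\{n>k\}}+h'_n\1_{\{n\le k\}}$, consecutive sequences differ in a single height, and for a single changed coordinate $m$ with $h'_m>h_m$ one proves the \emph{signed} sandwich $0\le F_{l,h'}-F_{l,h}\le h'_m-h_m$: the lower bound follows because the right-derivatives satisfy $f_{l,h}\le f_{l,h'}$ and both functions vanish at $0$; the upper bound is obtained by splitting $[0,T]$ into $[0,a_m]$, $[a_m,b_m]$, $[b_m,b'_m]$, $[b'_m,T]$ and using that $f_{l,h'}\le h'_m/l_m$ on $(a_m,b'_m)$, with $G_{l,h,h'}$ serving only as a local bridge on the outer pieces. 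Telescoping these signed one-coordinate bounds (positive increments when $h'_k>h_k$, negative when $h'_k<h_k$) is exactly what produces the maximum of the two sums of positive parts, rather than the sum of absolute differences or a factor $2$. If you want to salvage your approach, you would need to replace the transposition sketch by such a one-coordinate deformation argument, or prove a rearrangement inequality of comparable sharpness, which is precisely the content you have left open.
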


The piecewise linear function $G_{l,h,h'}$ need not be convex. However, it can be easily compared (in all cases, including $N=\infty$) with $F_{l,h'}$, because the intervals of linearity for 
$F_{l,h'}$ and 
$G_{l,h,h'}$
coincide.  
The function $G_{l,h,h'}$
will play a key bridging role in the proof of Proposition~\ref{prop:conv_pw_linear} below.

\begin{proof}[Proof of Lemma~\ref{lem:seq-to-cm}]
(a) The lengths of the subintervals $(a_n,a_n+l_n)$, $n\le N$, of $[0,T]$ sum up to $\sum_{n=1}^N l_n=T$. By comparing the respective slopes in the definition of $a_n$, it follows that these intervals are pairwise disjoint. Moreover, $F_{l,h}$ is convex on $[0,T]$ and linear on every $(a_n,a_n+l_n)$. Indeed, since a function is convex if and only if it has a non-decreasing right-derivative a.e., $F_{l,h}$ is convex. Any other piecewise linear convex function with the same faces must have the same derivative as $F_{l,h}$. Furthermore, if such a function also starts at $0$, it must equal $F_{l,h}$. 

(b) A termwise comparison shows that 
\[
-\sum_{n=1}^N (h_n-h'_n)^+
\le F_{l,h'}-G_{l,h,h'}\le
\sum_{n=1}^N (h'_n-h_n)^+,
\]
pointwise. Thus, it remains to show the inequality for $\|F_{l,h}-F_{l,h'}\|_\infty$, which requires two steps.

\textbf{Step 1.} First assume there exists $m\in\br{N}$ such that $h'_m\ne h_m$ and $h_n=h'_n$ for $n\in\br{N}\setminus\{m\}$. By symmetry we may assume $h'_m>h_m$. For all $n\in\br{N}$, define the slopes $s_n\coloneqq h_n/l_n$ and $s'_n\coloneqq h'_n/l_n$. 
Thus $s_m'>s_m$ and, if $n\ne m$, we have $s_n=s_n'$.
Since
\[
a_n=\sum_{k=1}^N l_k\cdot\1_{\{s_k<s_n\}} 
+ \sum_{k=1}^{n-1} l_k\cdot\1_{\{s_k=s_n\}},
\qquad
a'_n=\sum_{k=1}^N l_k\cdot\1_{\{s'_k<s'_n\}} 
+ \sum_{k=1}^{n-1} l_k \cdot\1_{\{s'_k=s'_n\}},
\]
the right-derivatives $f_{l,h}$ and $f_{l,h'}$ of $F_{l,h}$ and $F_{l,h'}$, respectively, are piecewise constant non-decreasing functions satisfying  $f_{l,h}\le f_{l,h'}$ on $[0,T]$. Since $F_{l,h}(0)=0=F_{l,h'}(0)$, we deduce that $F_{l,h'}-F_{l,h}\ge 0$.

By construction, $F_{l,h'}\ge G_{l,h,h'}$ pointwise (in fact, termwise) and  $\|F_{l,h'}-G_{l,h,h'}\|_\infty=h'_m-h_m$. Put $b_n\coloneqq a_n+l_n$ and $b'_n\coloneqq a'_n+l_n$ for $n\in\br{N}$ and note that, since $s_m\le s'_m$, we have $a_m\le a'_m$ and 
\begin{align*}
F_{l,h}(t)&=G_{l,h,h'}(t)=F_{l,h'}(t),
&&\text{for $t\in[0,a_m]$,}\\ 
F_{l,h}(t)&=G_{l,h,h'}(t)\le F_{l,h'}(t)\le G_{l,h,h'}(t)+(h'_m-h_m),
&&\text{for $t\in[b'_m,T]$.}
\end{align*}
Thus, to establish~\eqref{eq:sandwich} in this case, it suffices to prove that $F_{l,h'}(t)-F_{l,h}(t)\le h'_m-h_m$ on $t\in[a_m,b'_m]$. 

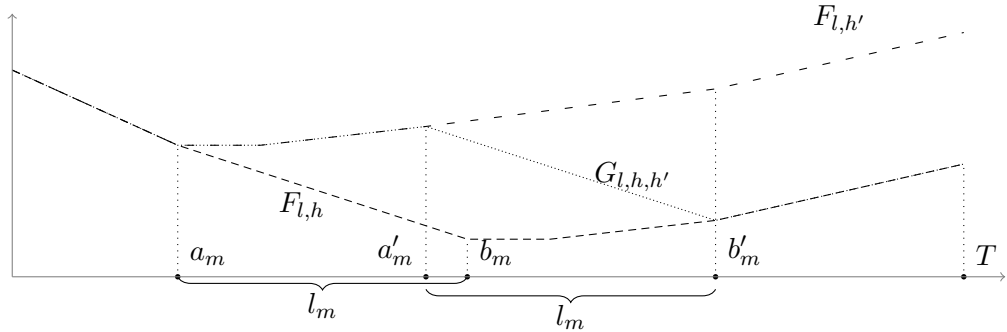
\begin{figure}[ht]
\begin{center}
	\begin{tikzpicture}
		\draw[loosely dashed] (0,5*.25) -- ++(2*1.1,-4*.25) -- ++(1*1.1,0*.25) 
		-- ++(2*1.1,1*.25) -- ++(3.5*1.1,2*.25) --++ (3*1.1,3*.25)
		node[below] {};
		\draw[densely dotted] (0,5*.25) -- ++(2*1.1,-4*.25) -- ++(1*1.1,0*.25) 
		-- ++(2*1.1,1*.25) -- ++(3.5*1.1,-5*.25) --++ (3*1.1,3*.25)
		node[below] {};
		\draw[densely dashed] (0,5*.25) -- ++(2*1.1,-4*.25) -- ++(3.5*1.1,-5*.25) 
		-- ++(1*1.1,0*.25) -- ++(2*1.1,1*.25) --++ (3*1.1,3*.25)
		node[above] {};
		
		\node[circle, scale=0.5, label=above :{$F_{l,h'}$}] at (10*1.1,6*.25) {}{};
		\node[circle, scale=0.5, label=below :{$G_{l,h,h'}$}] at (7.5*1.1,1*.25) {}{};
		\node[circle, scale=0.5, label=below :{$F_{l,h}$}] at (3.5*1.1,-.6*.25) {}{};
		
		\draw[dotted] (11.5*1.1,-6*.25) -- (11.5*1.1,-0*.25)
		node[above] {};
		\node[circle, fill=black, scale=0.2, label=above right:{$T$}] 
		at (11.5*1.1,-6*.25) {}{};
		
		\draw[dotted] (2*1.1,-6*.25) -- (2*1.1,1*.25)
		node[above] {};
		\node[circle, fill=black, scale=0.2, label=above right:{$a_m$}] 
		at (2*1.1,-6*.25) {}{};
		
		\draw[dotted] (5.5*1.1,-6*.25) -- (5.5*1.1,-4*.25)
		node[above] {};
		\node[circle, fill=black, scale=0.2, label=above right:{$b_m$}] 
		at (5.5*1.1,-6*.25) {}{};
		
		\draw[dotted] (8.5*1.1,-6*.25) -- (8.5*1.1,4*.25)
		node[above] {};
		\node[circle, fill=black, scale=0.2, label=above right:{$b'_m$}] 
		at (8.5*1.1,-6*.25) {}{};
		
		\draw[dotted] (5*1.1,-6*.25) -- (5*1.1,2*.25)
		node[above] {};
		\node[circle, fill=black, scale=0.2, label=above left:{$a'_m$}] 
		at (5*1.1,-6*.25) {}{};
		
		\draw [decorate,decoration={brace,amplitude=5pt,mirror,raise=1ex}]
		(5*1.1,-6*.25) -- (8.5*1.1,-6*.25) node[midway,yshift=-1.4em]{$l_m$};
		
		\draw [decorate,decoration={brace,amplitude=5pt,mirror,raise=0ex}]
		(2*1.1,-6*.25) -- (5.5*1.1,-6*.25) node[midway,yshift=-1em]{$l_m$};
		
		\draw [thin, draw=gray, ->] (0,-6*.25) -- (12*1.1,-6*.25);
		\draw [thin, draw=gray, ->] (0,-6*.25) -- (0,8*.25);
		
	\end{tikzpicture}
	\caption{\small Comparison between $F_{l,h}$, $F_{l,h'}$ and $G_{l,h,h'}$.
	}\label{fig:one-swap}
\end{center}
\end{figure}

By construction of $a'_m$, the right-derivative $f_{l,h'}$ is smaller or equal to $s'_m=h'_m/l_m$ on $(a_m,b'_m)$. Since $F_{l,h'}(a_m)=F_{l,h}(a_m)$, for $t\in[a_m,b_m]$ we have 
\[
F_{l,h'}(t)-F_{l,h}(t)
=\int_{a_m}^t f_{l,h'}(u)du - s_m (t-a_m)
\le (s'_m - s_m)(t-a_m)
\le (s'_m - s_m)l_m 
= h'_m-h_m.
\]

For $t\in[b_m,b'_m]$ we have $t-l_m\in[a_m,a'_m]$ and thus $F_{l,h}(t)-h_m=G(t-l_m)=F_{l,h'}(t-l_m)$. Hence 
\[
F_{l,h'}(t)-F_{l,h}(t)
=F_{l,h'}(t) - F_{l,h'}(t-l_m) - h_m
=\int_{t-l_m}^t f_{l,h'}(u) du - h_m
\le\int_{t-l_m}^t s'_m du - h_m
= h'_m - h_m.
\]
Thus, $F_{l,h'}-F_{l,h}\le h'_m - h_m$ on $[a_m,b'_m]$, proving~\eqref{eq:sandwich} in this case. 

\textbf{Step 2.} Consider the general case. For $k\in\{0,\ldots,N\}$, let  $h^{(k)}=(h^{(k)}_n)_{n\in\br{N}}$ be given by $h^{(k)}_n\coloneqq h_n\cdot\1_{\{n>k\}}+h'_n\cdot\1_{\{n\le k\}}$ for $n\in\br{N}$. Note that $h'=h^{(0)}$ and $h=h^{(N)}$. Since the sequences $h^{(k)}$ and $h^{(k-1)}$ only differ in the coordinate $h^{(k)}_k\neq h^{(k-1)}_k$, the identity $F_{l,h'}-F_{l,h} =\sum_{k=1}^N (F_{l,h^{(k-1)}}-F_{l,h^{(k)}})$ and \textbf{Step 1} imply~\eqref{eq:sandwich}, completing the proof.
\end{proof}

\begin{lem}
\label{lem:conv_pw_linear_finite}
Let $(N_k)_{k\in\N}$ be a sequence in $\N$  with a limit $N_k\to N_\infty\in\N$. For each $j\in\ov\N$, let $(l_{j,n})_{n\in\br{N_j}}$ be positive numbers satisfying $\sum_{n=1}^{N_j}l_{j,n}=T$, $(h_{j,n})_{n\in\br{N_j}}$ real numbers and $C_j$ the piecewise linear convex function defined in~\eqref{eq:CM-formula} 
with lengths $(l_{j,n})_{j\in\br{N_j}}$ and heights $(h_{j,n})_{j\in\br{N_j}}$. Suppose $l_{k,n}\to l_{\infty,n}$ and $h_{k,n}\to h_{\infty,n}$ as $k\to\infty$ for all $n\in\br{N_\infty}$. Then $\|C_\infty-C_k\|_\infty\to 0$ as $k\to\infty$.
\end{lem}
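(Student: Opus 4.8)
The plan is to reduce the convergence of the convex minorants $C_k$ to $C_\infty$ to the elementary estimate already packaged in Lemma~\ref{lem:seq-to-cm}(b), after a preliminary step that handles the fact that the number of faces $N_k$ is merely eventually equal to $N_\infty$ rather than exactly equal. Since $N_k \to N_\infty$ in $\N$, there exists $k_0$ such that $N_k = N_\infty$ for all $k \geq k_0$; it therefore suffices to prove the claim under the standing assumption $N_k = N_\infty =: N < \infty$ for all $k$, which I will adopt from now on. For such $k$ we have two sequences of lengths $l^{(k)} = (l_{k,n})_{n=1}^N$ and $l^{(\infty)} = (l_{\infty,n})_{n=1}^N$ and two sequences of heights $h^{(k)}, h^{(\infty)}$, and we want $\|C_\infty - C_k\|_\infty \to 0$.

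The mild technical nuisance is that the lengths also vary with $k$, whereas Lemma~\ref{lem:seq-to-cm}(b) compares two functions built from the \emph{same} length vector $l$. I would bridge this in two moves. First, introduce the auxiliary function $\widetilde{C}_k := F_{l^{(\infty)}, h^{(k)}}$, i.e. the piecewise linear convex function with the \emph{limit} lengths $l^{(\infty)}$ but the $k$-th heights $h^{(k)}$. Applying Lemma~\ref{lem:seq-to-cm}(b) with $l = l^{(\infty)}$, $h = h^{(\infty)}$, $h' = h^{(k)}$ gives
\[
\|C_\infty - \widetilde C_k\|_\infty
\le \max\Bigl\{\sum_{n=1}^N (h_{\infty,n} - h_{k,n})^+,\ \sum_{n=1}^N (h_{k,n} - h_{\infty,n})^+\Bigr\}
\le \sum_{n=1}^N |h_{\infty,n} - h_{k,n}|,
\]
which tends to $0$ as $k \to \infty$ since $N$ is finite and $h_{k,n} \to h_{\infty,n}$ for each $n$. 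Second, I must control $\|\widetilde C_k - C_k\|_\infty$, where now the heights agree ($h^{(k)}$ in both) but the lengths differ ($l^{(\infty)}$ versus $l^{(k)}$). Here I would argue directly: both functions are piecewise linear, convex, start at $0$, and are determined by a list of $N$ faces; the $n$-th face of $\widetilde C_k$ has slope $h_{k,n}/l_{\infty,n}$ and the $n$-th face of $C_k$ has slope $h_{k,n}/l_{k,n}$, and since $l_{k,n} \to l_{\infty,n} > 0$ and $h_{k,n} \to h_{\infty,n}$, both the slopes and the lengths of corresponding faces converge. Ordering the faces by slope (as in the definition of $F_{l,h}$), the right-derivatives of $\widetilde C_k$ and $C_k$ are step functions on $[0,T]$ whose jump locations and jump values converge, so the right-derivatives converge in, say, $L^1([0,T])$; integrating and using that both functions vanish at $0$ gives $\|\widetilde C_k - C_k\|_\infty \to 0$. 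A cleaner alternative avoiding any careful bookkeeping of slope orderings: since $C_k \to$ (something) forces convergence of the associated face data, one can instead observe that the map sending a finite admissible pair $(l,h)$ (with $\sum l_n = T$) to the function $F_{l,h} \in C([0,T])$ is continuous — this is essentially the content of \eqref{eq:CM-formula} together with the termwise Lipschitz bounds — and continuity at the point $(l^{(\infty)}, h^{(\infty)})$ is exactly what the lemma asserts; I would phrase the length-perturbation step as a one-line invocation of this continuity, or fold it into the same telescoping argument used in Step~2 of Lemma~\ref{lem:seq-to-cm}.

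Combining the two bounds via the triangle inequality,
\[
\|C_\infty - C_k\|_\infty \le \|C_\infty - \widetilde C_k\|_\infty + \|\widetilde C_k - C_k\|_\infty \to 0,
\]
which is the assertion. The only genuinely non-routine point is the length-perturbation estimate $\|\widetilde C_k - C_k\|_\infty \to 0$: Lemma~\ref{lem:seq-to-cm}(b) does not directly apply because it is stated for a fixed length vector, so I expect the main (modest) obstacle to be writing this cleanly. The most robust route is the right-derivative comparison: write $f_{\widetilde C_k}$ and $f_{C_k}$ as non-decreasing step functions, note that as $k\to\infty$ the breakpoints of these step functions (partial sums of the lengths, reordered by slope) converge and the plateau values (the slopes) converge, hence $\int_0^T |f_{\widetilde C_k}(u) - f_{C_k}(u)|\,\D u \to 0$, and finally $\sup_{t}|\widetilde C_k(t) - C_k(t)| = \sup_t |\int_0^t (f_{\widetilde C_k} - f_{C_k})| \le \int_0^T |f_{\widetilde C_k} - f_{C_k}|\,\D u \to 0$. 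One small care point: if two faces of $C_\infty$ share the same slope, the tie-breaking rule in \eqref{eq:CM-formula} orders them by index, and for large $k$ the corresponding faces of $C_k$ may have slightly perturbed, no-longer-equal slopes; but this only reshuffles a block of faces whose total length and total height are stable, so it does not affect the $L^1$ convergence of the right-derivatives, and the estimate goes through unchanged.
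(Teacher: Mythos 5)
Your proof is correct, but it follows a different route from the paper's. You split the perturbation $(l^{(k)},h^{(k)})\to(l^{(\infty)},h^{(\infty)})$ into a height step and a length step: the height step is a direct application of Lemma~\ref{lem:seq-to-cm}(b) with fixed lengths $l^{(\infty)}$ (the paper does \emph{not} use Lemma~\ref{lem:seq-to-cm}(b) in its proof of this lemma; it reserves it for Proposition~\ref{prop:conv_pw_linear}), and the length step is an $L^1$ comparison of the right-derivatives, viewed as monotone rearrangements of step functions whose breakpoints and plateau values converge, with the tie-breaking issue correctly dismissed because reshuffling only happens inside blocks of faces whose slopes all tend to a common value and whose total length is stable. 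The paper instead attacks the tie problem head on: it merges each block of faces sharing a limiting slope into a single face with the block's mean slope, obtaining convex functions $F_k$ that converge to $C_\infty$ termwise (hence uniformly, by convexity), and then bounds $\|F_k-C_k\|_\infty\le b_kT$ by a sup-norm comparison of right-derivatives, where $b_k$ is twice the maximal slope error; this avoids any rearrangement or $L^1$ bookkeeping. Your route buys a conceptually clean separation of the two sources of error and reuses the sandwich estimate already proved, at the cost of having to justify the $L^1$ convergence of rearranged derivatives (which your final paragraph does, modulo spelling out the uniform boundedness of the finitely many slopes). One caution: your ``cleaner alternative'' for the length step — invoking continuity of $(l,h)\mapsto F_{l,h}$ — is circular, since that continuity at $(l^{(\infty)},h^{(\infty)})$ is exactly the statement being proved; similarly, folding length perturbations into the telescoping of Step~2 of Lemma~\ref{lem:seq-to-cm} is not covered by that lemma, which perturbs heights only. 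Stick with your right-derivative argument, which is sound.
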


\begin{proof}
The convergence $N_k\to N_\infty$ as $k\to\infty$ implies $N_k=N_\infty=N$ for all sufficiently large $k$. Thus, we assume without loss of generality that $N_j=N$ for all $j\in\ov\N$. Define $s_{j,n}\coloneqq h_{j,n}/l_{j,n}$ for $j\in\ov\N$ and $n\in\br{N}$ and note that $s_{k,n}\to s_{\infty,n}$ as $k\to\infty$ for all $n\in\br{N}$. Thus, for all sufficiently large $k$, if the inequality $s_{\infty,n}<s_{\infty,m}$ holds, then $s_{k,n}<s_{k,m}$. Thus, we assume this property holds for all $k\in\N$. Moreover, we assume without loss of generality, by relabeling if necessary, that $s_{\infty,1}\le \cdots\le s_{\infty,N}$. 

We will next introduce a sequence of convex functions $F_k$ satisfying the limits $\|C_\infty-F_k\|_\infty\to0$ and $\|F_k-C_k\|_\infty\to 0$ as $k\to\infty$. These convex functions will replace each ``block'' of faces of $C_k$ with a given common \textit{limiting} slope, with a single face with the mean slope.

Let $M\le N$ be the number of distinct slopes in  $\{s_{\infty,n}:n\in\br{N}\}$ and note that $s_{\infty,i_1}<\cdots<s_{\infty,i_M}$, where we set $i_1\coloneqq 1$ and  $i_{n+1}\coloneqq\min\{m\in\{i_n+1,\ldots,N\}:s_{\infty,m}>s_{\infty,i_n}\}$ for $n\in\br{M-1}$. Note that $s_{k,m}\to s_{\infty,i_n}$ as $k\to\infty$ for all $m\in\{i_n,\ldots,i_{n+1}-1\}$. Let $L_{j,n}\coloneqq\sum_{m=i_n}^{i_{n+1}-1}l_{j,m}$ and $H_{j,n}\coloneqq\sum_{m=i_n}^{i_{n+1}-1}h_{j,m}$ for $n\in\br{M}$ and $j\in\ov\N$, where $i_{M+1}\coloneqq N+1$. Furthermore, for $j\in\ov\N$, let $(a_{j,n})_{n\in\br{N}}$ be the left endpoints of the intervals in~\eqref{eq:CM-formula} on which $C_j$ is linear. Note that $C_\infty$ admits the representation $C_\infty(t)=\sum_{n=1}^M H_{\infty,n}\min\{(t-a_{\infty,i_n})^+/L_{\infty,n},1\}$ for $t\in[0,T]$ and define the convex functions $F_k(t)\coloneqq\sum_{n=1}^M H_{k,n}\min\{(t-a_{k,i_n})^+/L_{k,n},1\}$ for $k\in\N$. 
The limits $l_{k,n}\to l_{\infty,n}$ and $h_{k,n}\to h_{\infty,n}$ imply  $a_{k,i_n}\to a_{\infty,i_n}$, $L_{k,n}\to L_{\infty,n}$ and $H_{k,n}\to H_{\infty,n}$ as $k\to\infty$ for $n\in\br{M}$. Thus, we have the pointwise (in fact, termwise) convergence $F_k\to C_\infty$. Since the functions are convex, the pointwise convergence implies $\|C_\infty-F_k\|_\infty\to 0$ as $k\to\infty$. 

To prove that $\|F_k-C_k\|_\infty\to 0$ as $k\to\infty$, note that for $a,c\in\R$ and $b,d>0$ satisfying $a/b\le c/d$, we have $a/b\le (a+c)/(b+d)\le c/d$. Thus, $H_{k,n}/L_{k,n}$ lies between the smallest and largest values of $S_{k,n}\coloneqq\{h_{k,i_n}/l_{k,i_n},\ldots,h_{k,i_{n+1}-1}/l_{k,i_{n+1}-1}\}$. Since all the slopes in $S_{k,n}$ converge to $s_{\infty,i_n}$, by the triangle inequality, we have $\max_{s\in S_{k,n}}|H_{k,n}/L_{k,n}-s| \le\max_{s,s'\in S_{k,n}}|s'-s|\le  b_k\coloneqq 2\max_{m\in\br{N}}|s_{k,m}-s_{\infty,m}|\to0$ as $k\to\infty$. Hence, the right-derivative of $F_k$ is at most $b_k$ away from the right-derivative of $C_k$, implying  $\|F_k-C_k\|_\infty\le b_kT\to 0$ as $k\to\infty$, completing the proof.
\end{proof}

\begin{prop}
\label{prop:conv_pw_linear}
Let $N_k$ and $N_\infty$ be $\ov\N$-valued random variables with $N_k\to N_\infty$ a.s. as $k\to\infty$. Let $(l_{j,n})_{n=1}^{N_j}$, $j\in\ov\N$, be random sequences of positive numbers satisfying $\sum_{n=1}^{N_j} l_{j,n}=T$ and $(h_{j,n})_{n=1}^{N_j}$, $j\in\ov\N$, sequences of random variables with $\sum_{n=1}^{N_j}|h_{j,n}|<\infty$ a.s. Let $C_j$ be the piecewise linear convex function in~\eqref{eq:CM-formula}
with sequences of lengths $(l_{j,n})_{n=1}^{N_j}$ and heights $(h_{j,n})_{n=1}^{N_j}$ for $j\in\ov\N$. Suppose $l_{k,n}\to l_{\infty,n}$ a.s. and $h_{k,n}\to h_{\infty,n}$ a.s. as $k\to\infty$ for all $n< N_\infty+1$ and
\begin{equation}
\label{eq:tightness}
\lim_{M\to\infty}\limsup_{k\to\infty}
    \e\min\bigg\{1,\sum_{n=M}^{N_k} |h_{k,n}|\bigg\}=0.
\end{equation}
Then $\|C_\infty-C_k\|_\infty\cip 0$ as $k\to\infty$.
\end{prop}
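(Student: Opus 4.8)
The plan is to reduce the random, possibly-infinite setting to the deterministic finite one handled by Lemma~\ref{lem:conv_pw_linear_finite}, using a truncation argument whose error is controlled by~\eqref{eq:tightness} via the sandwich bound of Lemma~\ref{lem:seq-to-cm}(b). First I would pass to subsequences: convergence in probability is equivalent to every subsequence having a further a.s.-convergent subsubsequence, so it suffices to show that from any subsequence we may extract one along which $\|C_\infty-C_k\|_\infty\to0$ a.s. Along such a subsequence the a.s.\ convergences $N_k\to N_\infty$, $l_{k,n}\to l_{\infty,n}$, $h_{k,n}\to h_{\infty,n}$ all hold simultaneously on an event of probability one, and (after a further extraction, using~\eqref{eq:tightness} and Markov's inequality) we may also assume $\min\{1,\sum_{n\ge M_k}|h_{k,n}|\}\to0$ a.s.\ for a suitable sequence $M_k\to\infty$. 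From here the argument is pathwise.

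Fix such a good sample point. The idea is to compare $C_k$ and $C_\infty$ through a common truncation level $M$. Let $C_k^{(M)}$ denote the piecewise linear convex function built from the first $M$ faces of $C_k$ \emph{together with one extra face} that collects the remaining length $T-\sum_{n<M}l_{k,n}$ and the remaining height $\sum_{n\ge M}h_{k,n}$ (so that the total length is still $T$); define $C_\infty^{(M)}$ analogously. The deterministic Lemma~\ref{lem:conv_pw_linear_finite}, applied to these $M+1$ faces (whose lengths and heights converge as $k\to\infty$ by the a.s.\ convergences and $\sum_n l_{j,n}=T$, $\sum_n h_{j,n}$ convergent), gives $\|C_\infty^{(M)}-C_k^{(M)}\|_\infty\to0$ as $k\to\infty$, for each fixed $M$. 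It therefore remains to bound $\|C_k-C_k^{(M)}\|_\infty$ (and likewise with $\infty$ in place of $k$) uniformly in $k$ for large $M$. This is exactly where Lemma~\ref{lem:seq-to-cm}(b) enters: $C_k$ and $C_k^{(M)}$ have the same lengths $l_{k,\cdot}$ but heights differing only in coordinates $n\ge M$ (where $C_k^{(M)}$ lumps them into one face), so the right-hand side of~\eqref{eq:sandwich} is controlled by $\sum_{n\ge M}|h_{k,n}|$, hence by $\min\{1,\sum_{n\ge M}|h_{k,n}|\}$ once this is small. Combining the triangle inequality
\[
\|C_\infty-C_k\|_\infty
\le \|C_\infty-C_\infty^{(M)}\|_\infty
  + \|C_\infty^{(M)}-C_k^{(M)}\|_\infty
  + \|C_k^{(M)}-C_k\|_\infty
\]
with these three estimates, then letting $k\to\infty$ and afterwards $M\to\infty$ along $M_k$, yields $\|C_\infty-C_k\|_\infty\to0$ a.s.\ along the subsequence, as required.

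The main obstacle is the bookkeeping in the truncation step: Lemma~\ref{lem:seq-to-cm}(b) is stated for two height sequences of the \emph{same} (finite) length, whereas $C_k^{(M)}$ has $M+1$ faces and $C_k$ has $N_k$. One clean way around this is to note that on the good event $N_k\ge M$ for large $k$, and to apply~\eqref{eq:sandwich} with $N$ replaced by $N_k$, comparing the height vector $(h_{k,1},\dots,h_{k,M-1},\sum_{n\ge M}h_{k,n},0,0,\dots)$ with $(h_{k,n})_{n=1}^{N_k}$; the two differ only from index $M$ onward, and $F_{l,h'}$ for the first vector coincides with $C_k^{(M)}$ after merging the trailing zero-length bookkeeping (or, more simply, one splits the lumped face back and observes the derivative only changes on a set of total length $\sum_{n\ge M}l_{k,n}$, which need \emph{not} be small — hence one really does need the height-based bound~\eqref{eq:sandwich} rather than a length-based one). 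A second, minor point is ensuring the extraction of $M_k$ is compatible with the a.s.\ convergences; this is routine since each is an a.s.\ statement and countably many such can be intersected. The tightness hypothesis~\eqref{eq:tightness} is used exactly once, precisely to make the first and third terms above negligible uniformly in $k$, and nowhere else.
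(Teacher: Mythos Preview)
Your overall architecture is exactly the paper's: truncate at level $M$, use Lemma~\ref{lem:conv_pw_linear_finite} for the finite middle piece, and control the truncation error by the sandwich bound of Lemma~\ref{lem:seq-to-cm}(b) together with~\eqref{eq:tightness}. But the bookkeeping step you flag as the ``main obstacle'' does not go through as written, and the gap is not cosmetic.

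Concretely, with $l=(l_{k,n})_{n=1}^{N_k}$ and $h'=(h_{k,1},\ldots,h_{k,M-1},\sum_{n\ge M}h_{k,n},0,\ldots,0)$, the function $F_{l,h'}$ is \emph{not} your $C_k^{(M)}$. In $F_{l,h'}$ the $M$-th face has length $l_{k,M}$ (not $\sum_{n\ge M}l_{k,n}$) and slope $(\sum_{n\ge M}h_{k,n})/l_{k,M}$, while the faces $n>M$ are horizontal with \emph{positive} lengths $l_{k,n}$; there is no ``zero-length bookkeeping'' to merge. So the bound you get from~\eqref{eq:sandwich} is $\|C_k-F_{l,h'}\|_\infty\le 2\sum_{n\ge M}|h_{k,n}|$, which is fine, but $F_{l,h'}\ne C_k^{(M)}$, and you have no handle on $\|F_{l,h'}-C_k^{(M)}\|_\infty$. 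A second problem is that Lemma~\ref{lem:seq-to-cm}(b) is stated only for $N<\infty$; on $\{N_\infty=\infty\}$ you cannot invoke it to bound $\|C_\infty-C_\infty^{(M)}\|_\infty$ at all. A third, smaller issue: your middle term needs the lumped height $\sum_{n\ge M}h_{k,n}$ to converge to $\sum_{n\ge M}h_{\infty,n}$, which does not follow from coordinatewise convergence alone (it does follow from~\eqref{eq:tightness}, but you have not argued it).

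The paper resolves all of this by zeroing heights rather than lumping lengths, and by inserting a second intermediate. Define $C_{j,M}$ via~\eqref{eq:CM-formula} with the \emph{same} lengths $(l_{j,n})_{n=1}^{N_j}$ and heights $(h_{j,n}\1_{\{n<M\}})_{n=1}^{N_j}$, and define $G_{j,M}(t)=\sum_{n<M}h_{j,n}\min\{(t-a_{j,n})^+/l_{j,n},1\}$ using the $a_{j,n}$ of $C_j$ itself. Then $\|C_j-G_{j,M}\|_\infty\le\sum_{n\ge M}|h_{j,n}|$ is a termwise bound valid for $N_j=\infty$, and the middle term $\|C_{\infty,M}-C_{k,M}\|_\infty$ depends only on the first $M-1$ heights (which converge) after merging the horizontal faces. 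The remaining comparison $\|G_{j,M}-C_{j,M}\|_\infty$ is where~\eqref{eq:sandwich} is actually applied, but with $N\le 2M$: the paper observes $G_{j,M}$ has at most $2M$ intervals of linearity, builds an auxiliary \emph{convex} function $F_{j,M}$ on those intervals (assigning to each ``gap'' the total height of the faces of $C_j$ it contains), checks $F_{j,M}$ is convex, and then Lemma~\ref{lem:seq-to-cm}(b) yields $\|G_{j,M}-C_{j,M}\|_\infty\le 2\sum_{n\ge M}|h_{j,n}|$. This five-term triangle inequality, and in particular the construction and convexity of $F_{j,M}$, is the missing idea in your sketch.
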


\begin{proof}
On the event $\{N_\infty<\infty\}$, by Lemma~\ref{lem:conv_pw_linear_finite} we have
$\|C_\infty-C_k\|_\infty\to 0$ a.s. as $k\to\infty$
(and~\eqref{eq:tightness} holds by our summing convention).
Assume we are on the event $\{N_\infty=\infty\}$. For each $M\in\N$ and $j\in\ov\N$, let $C_{j,M}$ be the piecewise linear convex function in~\eqref{eq:CM-formula}
with lengths $(l_{j,n})_{n=1}^{N_j}$ and heights $(h_{j,n}\1_{\{n<M\}})_{n=1}^{N_j}$. 
Recall $a\wedge b=\min\{a,b\}$ for any $a,b\in \R$.
For each $j\in\ov\N$, define
\[
a_{j,n}\coloneqq \sum_{m=1}^{N_j} l_{j,m} \cdot\1_{\{h_{j,m}/l_{j,m}<h_{j,n}/l_{j,n}\}} 
+ \sum_{m=1}^{n-1} l_{j,m} \cdot\1_{\{h_{j,m}/l_{j,m}=h_{j,n}/l_{j,n}\}},
\qquad n\in\br{N_j},
\]
end the function $G_{j,M}(t)\coloneqq\sum_{m=1}^{N_j\wedge (M-1)}h_{j,n}\min\{(t-a_{j,n})^+/l_{j,n},1\}$, $t\in[0,T]$. Note that $C_j$ and $G_{j,M}$ are linear on every interval $(a_{j,n},a_{j,n}+l_{j,n})$, $n\in\br{N_j}$, but $C_{j,M}$ may have different intervals of linearity. Since $1\wedge(x+y)\le 1\wedge x+ 1\wedge y$ for all $x,y\ge0$, the triangle inequality implies 
\begin{equation}
\label{eq:5-triangle}
1\wedge\|C_{\infty}-C_{k}\|_\infty
\le A_{\textbf{(I)}}  + A_{\textbf{(II)}} + A_{\textbf{(III)}} + A_{\textbf{(IV)}} + A_{\textbf{(V)}},
\end{equation}
where $A_{\textbf{(I)}}\coloneqq 1\wedge\|C_{\infty}-G_{\infty,M}\|_\infty$, $A_{\textbf{(II)}}\coloneqq1\wedge\|G_{\infty,M}-C_{\infty,M}\|_\infty$, $A_{\textbf{(III)}}\coloneqq1\wedge\|C_{\infty,M}-C_{k,M}\|_\infty$, $A_{\textbf{(IV)}}\coloneqq1\wedge\|C_{k,M}-G_{k,M}\|_\infty$ and $A_{\textbf{(V)}}\coloneqq1\wedge\|G_{k,M}-C_{k}\|_\infty$.
As $\zeta_n\cip 0$ as $n\to\infty$ if and only if $\e[1\wedge|\zeta_n|]\to 0$, it suffices to prove that the expectation of each of the terms in~\eqref{eq:5-triangle} converges to $0$ as we take $\limsup_{k\to\infty}$ and then $M\to\infty$.

\textbf{(I)}\&\textbf{(V)}. 
By construction of $C_j$ and $G_{j,M}$ we have $\|C_j-G_{j,M}\|_\infty \le\sum_{n=M}^{N_j}|h_{j,n}|$ for all $j\in\ov\N$. Thus, $\|C_\infty-G_{\infty,M}\|_\infty\to 0$ a.s. and hence $\e A_{\textbf{(I)}}=\e[1\wedge\|C_\infty-G_{\infty,M}\|_\infty]\to 0$ as $M\to\infty$. Moreover, by assumption in~\eqref{eq:tightness}, we have
\[
\limsup_{k\to\infty}\e A_{\textbf{(V)}}
=\limsup_{k\to\infty}\e[1\wedge\|C_k-G_{k,M}\|_\infty]
\le \limsup_{k\to\infty}
    \e\min\bigg\{1,\sum_{n=M}^{N_k}|h_{k,n}|\bigg\}
\xrightarrow[M\to\infty]{}0.
\]

\textbf{(III)}. 
For all $j\in\ov\N$, 
the faces of $C_{j,M}$
corresponding to $n\in\br{N_j}\setminus\br{M-1}$ are horizontal.
By convexity, we may assume they lie next to each other in the graph
of $C_{j,M}$. Merging all the lengths $l_{j,n}$, $n\in\br{N_j}\setminus\br{M-1}$, yields a representation of $C_{j,M}$ with $N_j\wedge M$ faces.
Fix $M\in\N$. Lemma~\ref{lem:conv_pw_linear_finite} yields $\|C_{\infty,M}-C_{k,M}\|_\infty\to 0$ a.s. and thus $\e A_{\textbf{(III)}}=\e[1\wedge\|C_{\infty,M}-C_{k,M}\|_\infty]\to 0$ as $k\to\infty$.

\textbf{(II)}\&\textbf{(IV)}. 
The idea is to apply~\eqref{eq:sandwich} in Lemma~\ref{lem:seq-to-cm}(b) to bound $\|C_{j,M}-G_{j,M}\|_\infty$, with $F_{l,h}$, $G_{l,h,h'}$ and $F_{l,h'}$ in Lemma~\ref{lem:seq-to-cm}(b) given by $C_{j,M}$, $G_{j,M}$ and $F_{j,M}$, respectively. The piecewise linear convex function $F_{j,M}$, which shares the intervals of linearity with those of $G_{j,M}$, is yet to be defined. 

Note that $G_{j,M}$ possesses a piecewise linear representation with at most $2M$ faces. Indeed, $G_{j,M}$ is linear on  $(a_{j,n},a_{j,n}+l_n)$, $n\in\br{N_j\wedge (M-1)}$, and the complement  $(0,T)\setminus\bigcup_{n=1}^{N_j\wedge (M-1)}[a_{j,n},a_{j,n}+l_{j,n}]$ is a disjoint union of $M_j\le M+1$ open intervals, say $(a'_{j,n},a'_{j,n}+l'_{j,n})$, $n\in\br{M_j}$. For each $n\in\br{M_j}$, define the height $h'_{j,n}\coloneqq\sum_{m\in S_{j,n}}h_{j,m}$, where $S_{j,n}\coloneqq\{m\in\br{N_j}\setminus\br{M-1}: 
a_{j,m}\in (a'_{j,n},a'_{j,n}+l'_{j,n})\}$. Put differently, the height $h'_{j,n}$ equals the sum of all the heights of the faces of $C_j$ that lie above the interval $[a'_{j,n},a'_{j,n}+l'_{j,n}]$. For any $j\in\ov\N$, define the function 
\begin{equation}
\label{eq:F_jM}
F_{j,M}(t)
\coloneqq
\sum_{n=1}^{N_j\wedge(M-1)}
    h_{j,n}\min\{(t-a_{j,n})^+/l_{j,n},1\}
+\sum_{n=1}^{M_j}
    h'_{j,n}\min\{(t-a'_{j,n})^+/l'_{j,n},1\},
\quad t\in[0,T].
\end{equation}

We will show that $F_{j,M}$ is convex. It suffices to prove that the consecutive slopes of $F_{j,M}$ on adjacent intervals of linearity increase. If the consecutive intervals are $(a_{j,m},a_{j,m}+l_{j,m})$ and $(a_{j,n},a_{j,n}+l_{j,n})$ (i.e. they come from the first sum in~\eqref{eq:F_jM}), then by construction the intervals must be adjacent with the same slopes in the convex function $C_j$, implying the corresponding slopes satisfy the correct ordering. Assume the consecutive intervals are $(a_{j,m},a_{j,m}+l_{j,m})$ and $(a'_{j,n},a'_{j,n}+l'_{j,n})$ (i.e. the first interval comes from first sum and the second interval comes from the second sum in~\eqref{eq:F_jM}). Suppose $a_{j,m}=a'_{j,n}+l'_{j,n}$ and note that, for $a,c\in\R$ and $b,d>0$ with $a/b\le c/d$ we have $a/b\le(a+c)/(b+d) \le c/d$. Thus, by definition of $h'_{j,n}$, we have 
$
h'_{j,n}/l'_{j,n}
\le\sup_{i\in S_{j,n}}h_{j,i}/l_{j,i}\le h_{j,m}/l_{j,m}$, where the last inequality holds because 
$a_{j,m}=a'_{j,n}+l'_{j,n}$ and
$C_j$ is convex. The case $a_{j,n}'=a_{j,m}+l_{j,n}$ is analogous since the slope $h'_{j,n}/l'_{j,n}$ is a mean of slopes at least as large as $h_{j,m}/l_{j,m}$, implying
the convexity of $F_{j,M}$.

Define $l=(l_{j,1},\ldots,l_{j,N_j\wedge(M-1)}, l'_{j,1},\ldots,l'_{j,M_j})$,  $h=(h_{j,1},\ldots,h_{j,N_j\wedge(M-1)}, h'_{j,1},\ldots,h'_{j,M_j})$ and $h'=(h_{j,1},\ldots,h_{j,N_j\wedge(M-1)}, 0,\ldots,0)$.
Note that the corresponding functions $F_{l,h}$, $F_{l,h'}$ and $G_{l,h,h'}$ in~\eqref{eq:CM-formula} 
equal $F_{j,M}$, $C_{j,M}$ and $G_{j,M}$, so~\eqref{eq:sandwich} implies the inequality 
\[
\|G_{j,M}-C_{j,M}\|_\infty
\le \|G_{j,M}-F_{j,M}\|_\infty + \|F_{j,M}-C_{j,M}\|_\infty
\le 2\sum_{m=1}^{M_j}|h'_{j,m}|
\le 2\sum_{n=M}^{N_j}|h_{j,n}|.
\]
Thus, $\|G_{\infty,M}-C_{\infty,M}\|_\infty\to 0$ a.s. and hence $\e A_{\textbf{(II)}}=\e[1\wedge\|C_\infty-G_{\infty,M}\|_\infty]\to 0$ as $M\to\infty$. Moreover, by assumption in~\eqref{eq:tightness}, we have
\[
\limsup_{k\to\infty}\e A_{\textbf{(IV)}}
=\limsup_{k\to\infty}\e[1\wedge\|G_{k,M}-C_{k,M}\|_\infty]
\le 2\limsup_{k\to\infty}
    \e\min\bigg\{1,\sum_{n=M}^{N_k}|h_{k,n}|\bigg\}
\xrightarrow[M\to\infty]{}0.\qedhere
\]
\end{proof}

\subsection{The convex minorant of random walks}
\label{subsec:CM-RW}

Let a function $f:[0,T]\to\R$ satisfy $f(0)=0$. Given parameters $0\le g\le u\le d\le T$, the \emph{3214 transformation}, introduced in~\cite{MR2825583}, is defined by 
\[
\Theta_{g,u,d} f(t)
=\begin{cases}
f(u+t) - f(u), 
    & 0\le t\le d-u,\\
f(d) - f(u) + f(g+t-(d-u)) - f(g), 
    & d-u< t\le d-g,\\
f(d) - f(t-(d-g)), 
    & d-g< t\le d,\\
f(t), 
    & d<t.
\end{cases}
\]

The  3214 transformation reorders the segments of the graph of $f$ as follows: the segments
(I) $[0,g]$, (II) $[g,u]$, (III) $[u,d]$ and (IV) $[d,T]$ are moved to (III) $[0,d-u]$, (II) $[d-u,d-g]$, (I) $[d-g,d]$ and (IV) $[d,T]$, respectively (see also Figure~\ref{fig:3214-CM} below).
This transformation possesses the following remarkable property when applied to continuous piecewise linear functions with a given set of increments.

\begin{prop}[{\cite[Thm~1]{MR2825583}}]
\label{prop:3214-RW} 
Fix $n\in\N$ and let $x_1,\ldots,x_n$ be real numbers, such that no two subsets have the same mean. Let $\fl{y}:=\max\{m\in\Z : m\le y\}$, $y\in\R$, and  $\pi:\br{n}\to\br{n}$ be a uniform random permutation. Define the piecewise linear random function $R=(R(t))_{t\in[0,T]}$ by $R(T):=\sum_{k=1}^n x_k$ and 
\begin{equation}
\label{eq:poly-RW}
R(t)\coloneqq \left(nt/T-\fl{nt/T}\right)
    x_{\pi(\fl{nt/T}+1)} +\sum_{k =1}^{\fl{nt/T}}x_{\pi(k)}, \qquad t\in[0,T).
\end{equation}
Let  $\CM{T}{R}$ denote the convex minorant of $R$
and let $W\sim\U(0,T)$ be independent of $R$.
Let
$0=V_0<\cdots<V_N=T$
be the sequence of contact points between the piecewise linear functions $R$ and $C_T^R$ and $j\in\br{N}$ the unique index such that $W\in (V_{j-1},V_j]$. Define $U\coloneqq\cl{W n/T}T/n$, $G\coloneqq V_{j-1}$ and $D\coloneqq V_j$. Then the 3214 transform with parameters $(G,U,D)$ satisfies the identity in law
\[
(U,R)\eqd (D-G,\Theta_{G,U,D} R).
\]
\end{prop}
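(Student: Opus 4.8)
The identity is \cite[Thm~1]{MR2825583}; here is a proof sketch. Every object in the statement is a deterministic functional of the pair $(\pi,W)$, and $W$ enters only through $m:=\cl{Wn/T}\in\br n$, which is uniform on $\br n$ and independent of the uniform permutation $\pi$. Put $U:=mT/n$, let $G<D$ be the endpoints of the face of $C_T^R$ that contains the $m$-th step $((m-1)T/n,mT/n]$, let $\ell:=(D-G)n/T$ be the number of steps in that face, and let $\pi^{\ast}\in S_n$ record the order of the increments of $\Theta_{G,U,D}R$ (well defined because $\Theta_{G,U,D}R$ has the same, pairwise distinct, increments as $R$). Then the claim is equivalent to the assertion that
\[
\Phi\colon\br n\times S_n\to\br n\times S_n,\qquad (m,\pi)\mapsto\big(\ell(m,\pi),\,\pi^{\ast}(m,\pi)\big),
\]
pushes the uniform law forward to itself. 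Since a self-map of a finite set preserves the uniform law precisely when it is a bijection, it suffices to prove that $\Phi$ is a bijection.

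\textbf{Key steps.} First I would record the action of $\Theta_{G,U,D}$ on increments: with $g':=Gn/T$ and $d':=Dn/T$, it keeps the four blocks of increments $x_{\pi(1)},\dots,x_{\pi(g')}$; $x_{\pi(g'+1)},\dots,x_{\pi(m)}$; $x_{\pi(m+1)},\dots,x_{\pi(d')}$; $x_{\pi(d'+1)},\dots,x_{\pi(n)}$ intact but reorders them as third, second, first, fourth --- i.e.\ it transposes the two outer blocks of $R|_{[0,D]}$ while fixing the middle block $[G,U]$ and the tail $[D,T]$. Equivalently, $\Theta_{G,U,D}$ is the cyclic rotation of $R|_{[0,D]}$ that restarts it from time $G$, followed by the cyclic rotation of the resulting restriction to $[0,D-G]$ that restarts it from time $U-G$; this reformulation is convenient since cyclic rotations interact with the convex minorant through cycle-lemma-type considerations. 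I would then prove that $\Phi$ is a bijection by exhibiting its inverse: given a target $(\ell,\sigma)$, one forms the path $R^{\ast}$ whose increments are ordered by $\sigma$, reads off from $C_T^{R^{\ast}}$ the parameters of the unique $3214$ surgery that turns $R^{\ast}$ back into a path $R$ whose convex minorant has a face of length $\ell$ at the relevant step, and returns that step together with the increment order of $R$. The content of the proof is that every fibre of $\Phi$ is a singleton --- equivalently, that for each $\sigma\in S_n$ there are exactly $n$ pairs $(m,\pi)$ with $\pi^{\ast}=\sigma$, and that $\ell$ takes each value in $\br n$ exactly once along them.

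\textbf{Main obstacle.} The crux is this bijection. The block transposition itself is trivial, but the convex minorant is a global functional: under the surgery a single face of $C_T^R$ may split into several faces of $C_T^{R^{\ast}}$, contact points may appear or disappear, and slopes get reshuffled, so reconstructing $(m,\pi)$ from $(\ell,\sigma)$ demands a careful geometric analysis of how contact points and face slopes are transported by $\Theta_{G,U,D}$. This analysis, which uses that $3214$ is a transposition of the two outer blocks (so that the cycle lemma applies once per cyclic rotation in the factorisation above), is carried out in \cite[Thm~1]{MR2825583} and may also be invoked as a black box. Finally, the genericity hypothesis that no two sub-collections of $\{x_1,\dots,x_n\}$ have equal mean plays a routine but indispensable role: it ensures that the convex minorant of every path arising in the argument has faces of pairwise distinct slopes and unambiguous contact points, so that the face decompositions --- and hence $\Phi$ and its inverse --- are well defined.
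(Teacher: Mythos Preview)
Your reduction to a bijection on the finite set $\br{n}\times S_n$ and the plan to exhibit an explicit inverse are exactly the paper's approach (following~\cite{MR2831081}). Where you diverge is that you stop short of writing the inverse and instead flag a global analysis of how $\CM{T}{R}$ transforms under the surgery as the ``main obstacle''. The paper shows this obstacle is illusory: the inverse is local and takes two lines.

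Given $(D-G,\tilde R)$ with $\tilde R=\Theta_{G,U,D}R$, the paper recovers $(G,U,D)$ as follows. (i)~The increments of $\tilde R$ over $[0,D-G]$ are a cyclic rotation of the increments of a single face of $\CM{T}{R}$, hence of a walk that stays above its chord; Lemma~\ref{lem:RW-above-line} says there is a \emph{unique} such rotation, and reading it off gives $D-U$. (ii)~The increments of $\tilde R$ over $[D-G,T]$ are those of $R$ over $[0,G]$ followed by those over $[D,T]$; since every face of $\CM{T}{R}$ on $[0,G]$ has slope below the extracted slope $\tilde R(D-G)/(D-G)$ and every face on $[D,T]$ has slope above it, $D$ is simply the right endpoint of the last face of the convex minorant of $\tilde R|_{[D-G,T]}$ whose slope is below that value. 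This yields $D$, hence $U$ and $G$, and the block transposition is then inverted trivially.

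So your cycle-lemma intuition is correct and is precisely what drives step~(i); but step~(ii) requires no second application of the cycle lemma and no tracking of how faces split or merge --- the relevant faces are already in convex order and one just reads off where the slope threshold is crossed. Filling in these two steps would turn your sketch into a complete proof matching the paper's.
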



We recall below a proof of Proposition~\ref{prop:3214-RW} based on a simple argument from~\cite{MR2831081}.

\begin{thm}
\label{thm:SB-representation-RW}
Let $x_1,\ldots,x_n$ be arbitrary real numbers and $\pi:\br{n}\to\br{n}$ a uniform random permutation. Define $R$ by~\eqref{eq:poly-RW} and let $(V_k)_{k\in\N}$ be an iid sequence of $\U(0,1)$ random variables independent of $\pi$. Define recursively $L_{n,0}\coloneqq T$, $L_{n,k}\coloneqq\lfloor L_{n,k-1}V_kn/T\rfloor T/n$, $\ell_{n,k}\coloneqq L_{n,k-1}-L_{n,k}$ for $k\in\N$ and let $N\le n$ be the largest integer for which $\ell_{n,N}>0$. Then the convex minorant $\CM{T}{R}$ has the same law as the piecewise linear convex function defined in~\eqref{eq:CM-formula} with sequences of lengths $(\ell_{n,k})_{k=1}^N$ and heights $(R(L_{n,k-1})-R(L_{n,k}))_{k=1}^N$. 
\end{thm}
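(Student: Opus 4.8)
The plan is to split the argument into two stages: first the \emph{generic} case, in which no two distinct subsets of $\{x_1,\dots,x_n\}$ have equal mean, where Proposition~\ref{prop:3214-RW} is available and can be iterated; and then the reduction of the general case to the generic one by a perturbation limit built on the convergence results of Section~\ref{subsec:CM-PL}.

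\textbf{Generic case.} I would induct on $n$, the case $n=1$ being immediate (then $R$ is a single segment, $\CM{T}{R}=R$, $N=1$, $\ell_{1,1}=T$, and the only height is $x_1$); genericity is inherited by every sub-multiset, so it survives the recursion. For the inductive step, take $W\sim\U(0,T)$ independent of $R$, put $U=\cl{Wn/T}T/n$, let $[G,D]$ be the face of $\CM{T}{R}$ containing $W$, and set $R^{(1)}=\Theta_{G,U,D}R$. By Proposition~\ref{prop:3214-RW}, $(D-G,R^{(1)})\eqd(U,R)$. The point is that the multiset of faces of $\CM{T}{R}$ is a deterministic function of the pair $(D-G,R^{(1)})$: the value $R^{(1)}(D-G)=R(D)-R(G)$ is the height of the peeled face $[G,D]$; the restriction of $R^{(1)}$ to $[D-G,T]$, translated to the origin, is the walk obtained from $R$ by deleting the block of increments over $[G,D]$ (up to a harmless reversal of a contiguous block if one uses the reversing form of $\Theta$, which preserves the law of a uniform-permutation path); and, since in the generic case every face of $\CM{T}{R}$ to the left of $[G,D]$ has strictly smaller slope than every face to its right, the splitting property of convex minorants shows that the faces of $\CM{T}{R}$ are precisely the face $(D-G,R(D)-R(G))$ together with the faces of the convex minorant of that shorter walk. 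Pushing this map through the identity $(D-G,R^{(1)})\eqd(U,R)$ gives that the faces of $\CM{T}{R}$ have the same law, as an unordered collection, as $\{(U,R(U))\}$ together with the faces of the convex minorant of $R|_{[U,T]}$, with $U$ an independent uniform grid point; and $R|_{[U,T]}$ is, conditionally on $U$ and on the multiset it carries, a uniform-permutation path of the corresponding $n-Un/T$ reals, so the inductive hypothesis applies to it. Unfolding the definition of $F_{\ell,h}$ in parallel — it is the piecewise linear convex function whose faces are the chord $(T-L_{n,1},R(T)-R(L_{n,1}))$ together with the faces of the same construction applied to $R|_{[0,L_{n,1}]}$, to which the inductive hypothesis also applies — the claim reduces to the identity in law of $(U,R(U))$ jointly with $R|_{[U,T]}$ and $(T-L_{n,1},R(T)-R(L_{n,1}))$ jointly with $R|_{[0,L_{n,1}]}$. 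That identity is just the exchangeability of $(x_{\pi(1)},\dots,x_{\pi(n)})$: a sum of the first $k$ increments together with the remaining ones in order has the same law as a sum of the last $k$ together with the first $n-k$ in order, because a uniform-permutation path is invariant under reversing its increments. Combined with Lemma~\ref{lem:seq-to-cm}(a) (a piecewise linear convex function through $0$ is determined by its multiset of faces), this closes the induction.

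\textbf{General case.} For arbitrary $x_1,\dots,x_n$ I would replace them by $x_i^{(\ve)}=x_i+\ve\zeta_i$ with $(\zeta_i)_{i=1}^n$ fixed so that $(x_i^{(\ve)})_{i=1}^n$ is generic for all $\ve$ in a sequence $\ve\downarrow0$ (an almost sure realisation of iid diffuse variables works), keeping the same permutation $\pi$ and the same stick-breaking variables $(V_k)$. The generic case already gives $\CM{T}{R^{(\ve)}}\eqd F_{\ell,h^{(\ve)}}$, where $h^{(\ve)}_k=R^{(\ve)}(L_{n,k-1})-R^{(\ve)}(L_{n,k})$ and neither $N$ nor the lengths $(\ell_{n,k})$ depend on $\ve$. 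As $\ve\downarrow0$, $R^{(\ve)}\to R$ uniformly, hence $\CM{T}{R^{(\ve)}}\to\CM{T}{R}$ uniformly by continuity of the convex-minorant map on piecewise linear paths, and $h^{(\ve)}\to h$, hence $F_{\ell,h^{(\ve)}}\to F_{\ell,h}$ uniformly by Lemma~\ref{lem:conv_pw_linear_finite} (applied with $N_k\equiv N$). Equality in law is preserved under these almost sure limits, which yields the theorem.

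\textbf{Main obstacle.} The technical heart is the bookkeeping in the inductive step: Proposition~\ref{prop:3214-RW} peels the face \emph{selected by a uniform point} and returns a transformed path, scrambling the increment indices, whereas~\eqref{eq:CM-formula} is built from the deterministic right-to-left tiling $T>L_{n,1}>L_{n,2}>\cdots$ of lattice stick-breaking. Showing that the two mechanisms produce the same joint law of all peeled lengths and heights is where exchangeability of the increments, together with the splitting and merging of convex minorants in the generic case, must be handled with care; the perturbation limit is then routine given Section~\ref{subsec:CM-PL}.
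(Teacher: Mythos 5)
Your proposal is correct and follows essentially the same route as the paper: in the no-ties case you peel faces via Proposition~\ref{prop:3214-RW} (your induction on $n$, with the exchangeability step spelled out, is just the paper's iteration of the same peeling), and you handle general $x_1,\ldots,x_n$ by perturbing to a tie-free sequence and passing to the limit using the $1$-Lipschitz property of the convex-minorant map together with Lemma~\ref{lem:conv_pw_linear_finite}, exactly as in the paper's proof.
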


We stress that in Theorem~\ref{thm:SB-representation-RW}, the reals $x_1,\ldots,x_n$ may have multiple subsets with the same mean. Our proof approximates a general sequence by one satisfying the ``no ties'' assumption of Proposition~\ref{prop:3214-RW} and
applies a convergence result for piecewise linear
convex functions from Section~\ref{subsec:CM-PL}.
The proof of Theorem~\ref{thm:SB-representation-RW} in~\cite{MR2825583} sub-samples the ties, resulting in a more involved statement of the theorem.


\begin{proof}[Proof of Theorem~\ref{thm:SB-representation-RW}]
First assume that no two subsets of the numbers $x_1,\ldots,x_n$ have the same mean. Let $\pi$ and $(G,U,D)$ be as in Proposition~\ref{prop:3214-RW}. By  Proposition~\ref{prop:3214-RW}, the face decomposition of $\CM{T}{R}$ contains the face with length-height pair $(D-G,\CM{T}{R}(D)-\CM{T}{R}(G))$, which has the same law as $(U,R(U))$, and the faces of a copy of $\CM{T-(D-G)}{R}$ independent of the first face. Indeed, this copy is in fact the convex minorant of $\Theta_{G,U,D}R$ on $[T-(D-G),T]$ and we may apply the same procedure to this copy. Iterating this procedure, we obtain a (finite) sequence of lengths of the faces of $\CM{T}{R}$, which has the same law as the sequence $(\ell_{k,n})_{k=1}^N$, and the corresponding heights, which have the same law as $(R(L_{k-1,n})-R(L_{k,n}))_{k=1}^N$, completing the proof in this case. 

To prove the general case, we recall that $\|\CM{T}{x}-\CM{T}{y}\|_\infty\le\|x-y\|_\infty$ for any bounded functions $x,y:[0,T]\to\R$. Indeed, this follows from the fact that $\CM{T}{x}-\|x-y\|_\infty$ is convex and $\CM{T}{x}-\|x-y\|_\infty\le x-\|x-y\|_\infty\le y$ pointwise. For any $\ve>0$ consider real numbers $x_{1,\ve},\ldots,x_{n,\ve}$ such that no two subsets have the same mean and $\sum_{k=1}^n|x_k-x_{k,\ve}|\le\ve$. Let $R_{\ve}$ be the corresponding random function in~\eqref{eq:poly-RW} (with the same permutation $\pi$). Note that $\|\CM{T}{R}-\CM{T}{R_{\ve}}\|_\infty\le\|R-R_{\ve}\|_\infty\le\ve\to 0$ as $\ve\to 0$. Moreover, by the argument in the previous paragraph, $\CM{T}{R_\ve}$ has the same law as the piecewise linear convex function $C_{\ve}$ given by~\eqref{eq:CM-formula} with lengths $(\ell_{n,k})_{k=1}^N$ and heights $(R_{\ve}(L_{n,k-1})-R_\ve(L_{n,k}))_{k=1}^N$. Let $C$ be the piecewise linear convex function given by~\eqref{eq:CM-formula} with lengths $(\ell_{n,k})_{k=1}^N$ and heights $(R(L_{n,k-1})-R(L_{n,k}))_{k=1}^N$.  Lemma~\ref{lem:conv_pw_linear_finite} yields $\|C-C_{\ve}\|_\infty\to0$ a.s. as $\ve\to0$, implying $C\eqd\CM{T}{R}$ and completing the proof.
\end{proof}

The proof of Proposition~\ref{prop:3214-RW} requires the following  lemma.

\begin{lem}
\label{lem:RW-above-line}
Let $x_1,\ldots,x_n$ be real numbers such that no two subsets have the same mean. Then there is a unique $k^*\in\br{n}$ such that $\sum_{i=1}^k x_{(k^*+i)\Mod{n}}\ge \frac{k}{n}\sum_{i=1}^n x_i$ for all $k\in\br{n}$, i.e. the walk with increments $x_{(k^*+1)\Mod{n}},\ldots,x_{(k^*+n)\Mod{n}}$ is above the line connecting zero with the endpoint $\sum_{i=1}^n x_i$. 
\end{lem}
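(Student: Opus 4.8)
The statement is a form of the classical cycle lemma (Dvoretzky--Motzkin / Spitzer's combinatorial lemma), and the plan is to reduce it to a statement about the minimum of a ``tilted'' walk. Set $S_0:=0$ and $S_k:=\sum_{i=1}^k x_i$ for $k\in\br{n}$, and define $T_k:=S_k-(k/n)S_n$ for $k\in\{0,1,\ldots,n\}$, so that $T_0=T_n=0$. For a fixed rotation index $m\in\br{n}$ and a fixed $k\in\br{n}$, the sum $\sum_{i=1}^k x_{(m+i)\Mod n}$ equals $S_{m+k}-S_m$ when $m+k\le n$ and $S_n-S_m+S_{m+k-n}$ when $m+k>n$; in either case, subtracting $(k/n)S_n$ and regrouping the tilt, the desired inequality $\sum_{i=1}^k x_{(m+i)\Mod n}\ge (k/n)S_n$ becomes exactly $T_{(m+k)\Mod n}\ge T_m$ (with the convention $T_0=T_n$). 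As $k$ ranges over $\br{n}$, the index $(m+k)\Mod n$ runs through every residue in $\{0,1,\ldots,n-1\}$, so I would record that the full family of inequalities in the lemma, for a given $m$, is equivalent to the single assertion that $T_m=\min\{T_0,T_1,\ldots,T_{n-1}\}$.

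With this equivalence in hand, existence and uniqueness both reduce to the claim that $k\mapsto T_k$ has a \emph{strict} minimum over $\{0,1,\ldots,n-1\}$. First I would show that $T_0,T_1,\ldots,T_{n-1}$ are pairwise distinct: if $T_j=T_k$ with $0\le j<k\le n-1$, then $\sum_{i=j+1}^k x_i=\frac{k-j}{n}S_n$, i.e.\ the proper non-empty subset $\{j+1,\ldots,k\}\subsetneq\br{n}$ has the same mean $\frac1n\sum_{i=1}^n x_i$ as $\br{n}$ itself, contradicting the hypothesis that no two distinct subsets of $\{x_1,\ldots,x_n\}$ share a mean. Hence there is a unique $k^*\in\br{n}$ (reading $0$ as $n$) at which $T$ attains its minimum over a full period, and by the equivalence of the previous paragraph this $k^*$ is the unique index with the stated property.

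The computations involved are entirely routine, so I expect the only point requiring genuine care to be the bookkeeping of the cyclic indices $(k^*+i)\Mod n$ --- in particular handling the term whose index wraps around $n$ and checking that the residues really do exhaust $\{0,\ldots,n-1\}$ --- together with making sure that, when invoking the hypothesis, the subset $\{j+1,\ldots,k\}$ is genuinely a proper non-empty subset (which holds precisely because $1\le k-j\le n-1$). There is no substantive obstacle beyond this careful modular bookkeeping.
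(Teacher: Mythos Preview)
Your proof is correct and follows essentially the same approach as the paper: both reduce the statement to locating the unique minimiser of the tilted walk $k\mapsto S_k-(k/n)S_n$ (equivalently $\sum_{i=1}^k(x_i-s)$ with $s=S_n/n$), and both derive uniqueness from the observation that two equal values of this walk would produce a proper subset with the same mean as the full set. The only difference is that you spell out in detail the equivalence between the cyclic inequalities and the minimisation, which the paper leaves as ``it is easily seen''.
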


\begin{proof}
Define $s\coloneqq\sum_{i=1}^n x_i/n$. If the walk $k\mapsto\sum_{i=1}^k (x_i-s)$, $k\in\br{n}$, attained its minimum at two times $k_1<k_2$, then $\sum_{i=k_1+1}^{k_2}x_i/(k_2-k_1)= s$, contradicting the assumption. It is easily seen that the $k^*$ in the statement of the lemma is the time at which this walk attains its minimum on $\br{n}$.
\end{proof}

\begin{proof}[Proof of Proposition~\ref{prop:3214-RW} {(\cite{MR2831081})}]
Note that, if a random element $\zeta$ is uniformly distributed in some finite set $\mathcal{Z}$ and if the map $\varphi:\mathcal{Z}\to\mathcal{Z}$ is injective (and thus bijective), then $\varphi(\zeta)$ is also uniformly distributed on $\mathcal{Z}$. Thus, since $\pi$ and $U$ are uniform and independent, it is sufficient to show that the transformation $(u,f)\mapsto (d-g,\Theta_{g,u,d}f)$ is injective. 

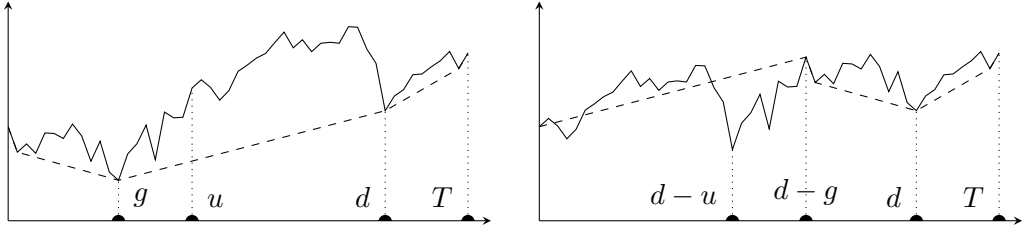
\begin{figure}[ht]
\begin{center}
\begin{tikzpicture}
\begin{axis} 
	[
	ymin=-.75,
	ymax=1,
	xmin=0,
	xmax=1.05,
	width=8cm,
	height=4.5cm,
	axis on top=true,
	axis x line=bottom, 
	axis y line=middle,
	ticks = none,
	legend style={at={(1.03,.865)},anchor=north east}]
	\node[circle, fill=black, scale=0.5, label=above left:{$T$}] at (100,-.75) {}{};
	\addplot[dotted]
	coordinates {(1,-.75)(1,0.588)};
	\node[circle, fill=black, scale=0.5, label=above right:{$g$}] at (24,-.75) {}{};
	\addplot[dotted]
	coordinates {(.24,-.75)(.24,-0.427)};
	\node[circle, fill=black, scale=0.5, label=above right:{$u$}] at (40,-.75) {}{};
	\addplot[dotted]
	coordinates {(.40,-.75)(.40,0.309)};
	\node[circle, fill=black, scale=0.5, label=above left:{$d$}] at (82,-.75) {}{};
	\addplot[dotted]
	coordinates {(.82,-.75)(.82,.128)};
	
	
	\addplot[
	solid, color=black, thin,
	]
	coordinates {
(0.0,0.0)(0.02,-0.203)(0.04,-0.137)(0.06,-0.212)(0.08,-0.0494)(0.1,-0.0537)(0.12,-0.0938)(0.14,0.0214)(0.16,-0.0741)(0.18,-0.275)(0.2,-0.116)(0.22,-0.362)(0.24,-0.427)(0.26,-0.217)(0.28,-0.136)(0.3,0.0125)(0.32,-0.263)(0.34,0.117)(0.36,0.071)(0.38,0.0756)(0.4,0.309)(0.42,0.375)(0.44,0.314)(0.46,0.212)(0.48,0.288)(0.5,0.442)(0.52,0.492)(0.54,0.548)(0.56,0.585)(0.58,0.672)(0.6,0.755)(0.62,0.63)(0.64,0.694)(0.66,0.599)(0.68,0.666)(0.7,0.674)(0.72,0.666)(0.74,0.798)(0.76,0.794)(0.78,0.62)(0.8,0.506)(0.82,0.128)(0.84,0.245)(0.86,0.296)(0.88,0.415)(0.9,0.421)(0.92,0.477)(0.94,0.527)(0.96,0.6)(0.98,0.464)(1.0,0.588)
	};
	
	\addplot[
	dashed, color=black, thin,
	]
	coordinates {
(0.0,0.0)(0.02,-0.203)(0.24,-0.427)(0.82,0.128)(0.98,0.464)(1.0,0.588)
	};
\end{axis}
\end{tikzpicture}\hspace{.5cm}
\begin{tikzpicture}
\begin{axis} 
	[
	ymin=-.75,
	ymax=1,
	xmin=0,
	xmax=1.05,
	width=8cm,
	height=4.5cm,
	axis on top=true,
	axis x line=bottom, 
	axis y line=middle,
	ticks = none,
	legend style={at={(1.03,.865)},anchor=north east}]
	\node[circle, fill=black, scale=0.5, label=above left:{$T$}] at (100,-.75) {}{};
	\addplot[dotted]
	coordinates {(1,-.75)(1,0.588)};
	\node[circle, fill=black, scale=0.5, label=above left:{$d-u$}] at (42,-.75) {}{};
	\addplot[dotted]
	coordinates {(.42,-.75)(.42,-0.181)};
	\node[circle, fill=black, scale=0.5, label=above :{$d-g$}] at (58,-.75) {}{};
	\addplot[dotted]
	coordinates {(.58,-.75)(.58,0.555)};
	\node[circle, fill=black, scale=0.5, label=above left:{$d$}] at (82,-.75) {}{};
	\addplot[dotted]
	coordinates {(.82,-.75)(.82,.128)};
	
	
	\addplot[
	solid, color=black, thin,
	]
	coordinates {
(0.0,0.0)(0.02,0.0661)(0.04,0.00577)(0.06,-0.0963)(0.08,-0.0205)(0.1,0.133)(0.12,0.183)(0.14,0.24)(0.16,0.276)(0.18,0.363)(0.2,0.446)(0.22,0.321)(0.24,0.385)(0.26,0.29)(0.28,0.357)(0.3,0.366)(0.32,0.357)(0.34,0.489)(0.36,0.486)(0.38,0.311)(0.4,0.198)(0.42,-0.181)(0.44,0.0293)(0.46,0.111)(0.48,0.259)(0.5,-0.0165)(0.52,0.363)(0.54,0.317)(0.56,0.322)(0.58,0.555)(0.6,0.353)(0.62,0.418)(0.64,0.343)(0.66,0.506)(0.68,0.501)(0.7,0.461)(0.72,0.577)(0.74,0.481)(0.76,0.28)(0.78,0.439)(0.8,0.193)(0.82,0.128)(0.84,0.245)(0.86,0.296)(0.88,0.415)(0.9,0.421)(0.92,0.477)(0.94,0.527)(0.96,0.6)(0.98,0.464)(1.0,0.588)
	};
	
	\addplot[
	dashed, color=black, thin,
	]
	coordinates {
(0.0,0.0)(0.4,0.383)(0.56,0.536)(0.58,0.555)(0.6,0.353)(0.82,0.128)(0.98,0.464)(1.0,0.588)
	};
\end{axis}
\end{tikzpicture}
\caption{The pictures show a path of a random walk $R$ (\emph{solid}) and its convex minorant $C_T^{R}$ (\emph{dashed}) on $[0,T]$ on the left and their 3214 transforms on the right. The transform is associated to some $u\in(0,T)$ and the endpoints $\{g,d\}$ of the maximal face of $C_T^R$ containing $u$.
}\label{fig:3214-CM}
\end{center}
\end{figure}

Assume without loss of generality that $T=n$. 
To prove the injectivity, it suffices to describe the inverse transformation. Given $d-g$, and $\ti f \coloneqq\Theta_{g,u,d}f$, note that $d-u$ is the unique time in Lemma~\ref{lem:RW-above-line} for the increments of $\ti f$ over the set $\br{d-g}$, 
see Figure~\ref{fig:3214-CM}. Consider the convex minorant of $\ti f$ on the interval $[d-g,T]$ and note that $d$ is the right end of the last face whose slope is less than $\ti f(d-g)/(d-g)$. Thus we may identify $d$, $u$ and $g$ and then invert the 3214 transform to recover $f$. This shows that $(u,f)\mapsto (d-g,\Theta_{g,u,d}f)$ is injective, completing the proof.
\end{proof}

\subsection{Proof of 
Theorem~\ref{thm:levy-minorant}}
\label{subsec:proof-main-thm}

\textbf{Step 1.} Let $\wt C_k$ be the largest convex function on $[0,T]$ that is smaller than $X$ pointwise on the set $D_k\coloneqq\{Tn/2^k:n\in\{0,1,\ldots,2^k\}\}$. Since $D_{k}\subset D_{k+1}$, we have $\wt C_k(t)\geq \wt C_{k+1}(t)$ for all $t\in[0,T]$.  Moreover, the limit $\wt C_\infty\coloneqq\lim_{k\to\infty}\wt C_k$ is clearly convex and smaller than $X$ pointwise on the dense set $\bigcup_{k\in\N}D_k $ in $[0,T]$. As $X$ is \cadlag, $\wt C_\infty$ is pointwise smaller than $X$ on $[0,T]$, implying $\wt C_\infty\le\CM{T}{X}$. Since $\CM{T}{X}$ is convex and smaller than $X$ on $D_k$, the maximality of $\wt C_k$ yields $\wt C_k\ge\CM{T}{X}$ for all $k\in\N$, implying $\wt C_\infty\ge \CM{T}{X}$ and thus $\wt C_\infty=\CM{T}{X}$. 

\textbf{Step 2.} Let $U_1,U_2,\ldots$ be an iid sequence of $\U(0,1)$ random variables independent of $X$. Let $L_0\coloneqq T$, $L_n\coloneqq U_nL_{n-1}$, $\ell_n\coloneqq L_{n-1}-L_n$ and $\xi_n\coloneqq X_{L_{n-1}}-X_{L_n}$ for $n\in\N$. For each $k\in\N$, define $L_{k,0}\coloneqq T$, $L_{k,n}\coloneqq \fl{L_{k,n-1}U_n2^k/T}T/2^k$, $\ell_{k,n}\coloneqq L_{k,n-1}-L_{k,n}$ and $\xi_{k,n}\coloneqq X_{L_{k,n-1}}-X_{L_{k,n}}$ for $n\in\N$. Let $N_k$ be the largest natural number for which $\ell_{k,N_k}>0$, so that $\ell_{k,n}$, $L_{k,n}$ and $\xi_{k,n}$ are all zero for all $n>N_k$. For each $k\in\N$, let $C_k$ (resp. $C_\infty$) be the piecewise linear convex function given in~\eqref{eq:CM-formula} with lengths $(\ell_{k,n})_{n=1}^{N_k}$ (resp. $(\ell_n)_{n=1}^\infty$) and heights $(\xi_{k,n})_{n=1}^{N_k}$ (resp. $(\xi_n)_{n=1}^\infty$). Next we show that $\|C_{k}-C_\infty\|_\infty\cip 0$ as $k\to\infty$. 
Since $X$ has \cadlag~paths with countably many jumps, $L_n$ has a density for every $n\in\N$ and $L_{k,n}\to L_n$ a.s. as $k\to\infty$, we have  $\xi_{k,n}\to\xi_n$ a.s. as $k\to\infty$ for all $n\in\N$. Thus, by Proposition~\ref{prop:conv_pw_linear}, it suffices to prove that $\lim_{M\to\infty}\limsup_{k\to\infty}\e[1\wedge P_{k,M}]=0$, where $P_{k,M}\coloneqq \sum_{n=M}^{N_k}|\xi_{k_m,n}|$.

Theorem~\ref{thm:SB-representation-RW} implies that $C_k\eqd\wt C_k$. Let $R_k$ be the continuous piecewise linear function connecting the skeleton of $X$ on $D_k$ with line segments. Since the minimum and the final value of the convex minorant $\CM{T}{R_k}=\wt C_k$ agree with the corresponding functionals of $R_k$, the total variation $\sum_{n=1}^{N_k}|\xi_{k,n}|$ of $C_k$ has the same distribution as $X_T-2\min_{t\in D_k}X_t$. Moreover, by the independence and the definition of $(L_{k,n})_{n\in\N}$, it is easily seen that $P_{k,M}=\sum_{n=M+1}^{N_k}|\xi_{k,n}|\eqd X_{L_{k,M}}-2\min_{t\in D_k\cap [0,L_{k,M}]}X_t$. By the inequality $L_{k,M}\le L_M$, we have 
\[
X_{L_{k,M}}-2\min_{t\in D_k\cap [0,L_{k,M}]}X_t
\le X_{L_{k,M}}-2\un{X}_{L_{k,M}}
\le \ov X_{L_M}-2\un{X}_{L_M}.
\]
Since $L_M\to 0$ a.s. as $M\to\infty$ and $\ov X_t-2\un{X}_t\to 0$ a.s. as $t\to 0$, we have $\ov X_{L_M}-2\un{X}_{L_M}\to 0$ a.s. as $M\to\infty$, implying
\[
\limsup_{k\to\infty}
    \e[1\wedge P_{k,M}] 
\le\e[1\wedge (\ov X_{L_M}-2\un{X}_{L_M})]
\xrightarrow[M\to\infty]{} 0.\qedhere
\]

\textbf{Step 3.} 
Recall that, by Theorem~\ref{thm:SB-representation-RW}, we have $C_k\eqd\wt C_k$. Since $\|C_k-C_\infty\|_\infty\cip 0$ and $\|\wt C_k-\CM{T}{X}\|_\infty\to 0$ a.s. as $k\to\infty$, we conclude that $C_\infty\eqd\CM{T}{X}$, implying Theorem~\ref{thm:levy-minorant}.


\appendix

\section{Sticks on an exponential interval are a Poisson point processes}
\label{app:PPP_expon-time}

For $n\geq2$, the Dirichlet law
on the simplex 
$\{(x_1,\ldots,x_n)\in(0,1]^n\,:\, \sum_{i=1}^nx_i=1\}$
with parameters $\theta_i>0$
has a density proportional to 
$(x_1,\ldots,x_n)\mapsto 
\prod_{i=1}^n x_i^{\theta_i-1}$. 
$D$ is a Dirichlet random measure  on $(0,1]$ 
if for any
$0=t_0<t_1<\ldots<t_n=1$,
the random vector
$(D((t_0,t_1]),\ldots,D((t_{n-1},t_n]))$
follows the Dirichlet law with parameters 
$(t_i-t_{i-1})$.
Let $(U_n)_{n\in\N}$ and $(V_n)_{n\in\N}$ 
be independent iid $\U(0,1)$ sequences, independent of 
a  Dirichlet random measure $D_0$ on $(0,1]$. Elementary calculations imply that $D_1\coloneqq (1-V_1)\delta_{U_1} + V_1 D_0\eqd D_0$ and hence $D_n\coloneqq (1-V_n)\delta_{U_n} + V_n D_{n-1}\eqd D_0$ for all $n\in\N$. Since  $D_n$ converges to $D_\infty:=\sum_{n\in\N}\ell_n\delta_{U_n}$ in total variation, where 
$\ell_n\coloneqq (1-V_n)\prod_{k=1}^{n-1} V_k$ is a uniform stick-breaking process on $[0,1]$, we have $D_0\eqd D_\infty$. Moreover, by construction we have 
$\sum_{n\in\N}(\ell_n^{(\theta)}/T_\theta)\delta_{U_n}\eqd D_\infty$, where  
$(\ell_n^{(\theta)})_{n\in\N}$
is a stick-breaking process 
on an independent exponential time horizon
$T_\theta\sim \Exp(\theta)$.


Let $G$ be a gamma subordinator (i.e. $G_t$ has density proportional to $s\mapsto s^{t-1}e^{-\theta s}$). 
The jump of $G$ at $t>0$, $\Delta G_t:=G_t-\lim_{s\uparrow t}G_s$, is zero for all but countably many $t$, making
$D':= \sum_{t\in(0,1]}\left((\Delta G_t)/G_1\right) \delta_{t}$
a Dirichlet random measure on $(0,1]$, independent of
$G_1\sim \Exp(\theta)$. Indeed,
for any $0=t_0<t_1<\ldots<t_n=1$, the Jacobian change-of-variable formula shows that the vector  
\[
\big(D'((t_0,t_1]),\ldots,D'((t_{n-1},t_n]),G_1\big)
=((G_{t_1}-G_{t_0})/G_1,\ldots,(G_{t_n}-G_{t_{n-1}})/G_1,G_1)
\]
has the desired law.
Thus
$(D',G_1)\eqd (\sum_{n\in\N}(\ell_n^{(\theta)}/T_\theta)\delta_{U_n}, T_\theta)$,
implying that the law of the Poisson point process
$\sum_{t\in(0,1]} \1_{\Delta G_t>0} \delta_{\Delta G_t}$
coincides with that of the random measure 
$\sum_{n\in\N}\delta_{\ell_n^{(\theta)}}$.

\bibliographystyle{amsalpha}
\bibliography{References}

\section*{Acknowledgements}

\thanks{
\noindent 
JGC and AM are supported by EPSRC grant EP/V009478/1 and The Alan Turing Institute under the EPSRC grant EP/N510129/1; 
AM was supported by EPSRC grant EP/P003818/2 and the Turing Fellowship funded by the Programme on Data-Centric Engineering of Lloyd's Register Foundation;
JGC was supported by CoNaCyT scholarship 2018-000009-01EXTF-00624 CVU 699336.}

\end{document}